\newtheorem{lemma}{\bf{Lemma} }[section]
\newtheorem{proposition}{\bf{Proposition}}[section]
\newtheorem{theorem}{\bf{Theorem}}[section]
\newtheorem{remark}{\sc{Remark} }[section]
\newtheorem{definition}{\sc{Definition} }[section]
\newtheorem{corollary}{\bf{Corollary} }[section]
\numberwithin{equation}{section}
\begin{document}

\title{On the smallness conditions for a PEMFC single cell problem}
\author{Luisa Consiglieri}
\address{Luisa Consiglieri, Independent Researcher Professor, European Union}
\urladdr{\href{http://sites.google.com/site/luisaconsiglieri}{http://sites.google.com/site/luisaconsiglieri}}

\begin{abstract} 
The aim of the present paper is to prove whose smallness conditions being necessary in order to get the final result of existence of a solution. 
In the first part,  we present the model for a proton exchange membrane  fuel cell (PEMFC) single cell and we clarify the interactions of the different components
 namely,  velocity,  pressure,  density,  temperature and potential.
The final mathematical model is a quasilinear elliptic system where the cross effects have a strong interlink. 
It consists of the Stokes--Darcy system altogether with thermoelectrochemical system under some non-standard interface and boundary conditions.
The proof of existence of weak solutions relies on the Tychonof fixed point theorem, by providing some regularity and some smallness conditions.
The actual system is divided into two systems of equations and they are separately studied. 
The novelty of the present work is to establish quantitative estimates for improving the technical hypotheses and, in particular, the smallness conditions
in the two-dimensional  case.
Indeed, the smallness conditions only can be explicit if quantitative estimates are established.
To this aim, we also establish quantitative estimates for the Poincar\'e and Sobolev inequalities and for some trilinear terms.
\end{abstract}

\keywords{PEM fuel cell; multiregion domain; Stokes--Darcy system; Beavers--Joseph--Saffman boundary condition, thermoelectrochemistry.}
\subjclass[2020]{Primary: 76S05, 80A50; Secondary: 35Q35, 35Q79.}
\maketitle

\section{Introduction}
\label{int} 

In this paper, we present and study a model for  proton exchange membrane (PEM) fuel cells, those that work
 at low operating temperature such as the  polymer electrolyte membrane fuel cells with hydrogen supply (H\(_2\)PEMFC) and direct methanol fuel cells (DMFC).
PEM fuel cells have been object of study in the last decades by their inherent energy conversion.
They possess functional structure from the nanoscale up to the macroscale (see \cite{wetton,seca} and the references therein)
and then their descriptive models are multiscale thermoelectrochemical (TEC) systems.
Numerical simulations  have often been implemented in the past two decades for the study of different tasks performance
 \cite{gosh,gurau,HuYu2004,Omran,KangBuchi,singh} and, in particular, for computational fluid dynamics (CFD), see  \cite{Iranzo,zhang2022} and the references therein.
Also experimental works have been performed, see \cite{hawaii} and the references therein.
A  simplified  model of a self-humidifying PEM fuel cell is both numerically simulated and experimentally tested in \cite{saleh}.

The fuel cell consists of a membrane,  two electrodes and two flow regions.
 The membrane is a porous medium, which   is electron insulating and serves to conduct ions  produced at one electrode to the
other, namely the ionic charge carrier of H\(_3\)O\(^+\) in particular for H\(_2\)PEMFC or DMFC.

The mathematical model firstly consists of coupling of the Stokes--Fourier and Darcy--Fourier equations, known as the Stokes--Darcy--Fourier (SDF)  system.
We refer to \cite{cms2012} the study of the SDF system under both Beavers--Joseph--Saffman (BJS) and Beavers--Joseph (BJ)
 interface boundary conditions.
The generalization of BJS-SDF problem to non-Newtonian fluids is studied in \cite{rse2013}
by introducing the Forchheimer model.
Other approach is introduced in \cite{fnt2010}, in which a nonlinear Darcy's law is obtained by asymptotic limit
of solutions to the Navier--Stokes--Fourier system  in perforated domains
with tiny holes, where the diameter of the holes is proportional to their mutual distance, by
homogenization method. 

Secondly,  a thermoelectrochemical model is gathered to the Beavers--Joseph--Saff\-man/Stokes--Darcy problem, with some modified Butler--Volmer interface condition.
The Joule effect is taken into account  on the energy equation due to the electrical current.
To assure that the Joule effect works better than  a \(L^1\) data,
we provide some elliptic regularity for \(q>n=2\) (space dimension) as it has been used in real world problems
(see \cite{gro89,groreh} and references therein). Recently, in  \cite{jo2016,jo2021} the elliptic regularity is studied  and improved via
 the quantitative Sneiberg inequality.

Here, we do not assume that the mathematically inconvenient constants are equal to one,
because the magnitude of each constant is physically relevant.
Other important physical behavior is the discontinuous coefficients to allow, for instance, the viscosities being temperature dependent.
This gives an extra draw back to the elliptic system.
It is known that the fixed point argument is the primordial shortcoming
in the existence of solutions of nonlinear PDE at the steady state and
some smallness conditions are required for the application of  the fixed point argument.
  Several hypotheses are made on the coefficients in the equations. 
Some of them are  natural but others are  technical. 
 The reason being that the mathematical model has a strong interlink due to the cross effects.
Future work should be done to improve the smallness conditions.

The outline of the present paper is as follows.
Next section,  we  introduce the mathematical equations of the concrete physical model under consideration at the steady state. 
In Section \ref{framework}, we state the set of hypothesis and the two-dimensional (2D) main result. 
Also, the physical meaning of the assumptions is  discussed for a  H\(_2\)-PEMFC.
In Section \ref{strat}, we  delineate the strategy used in this paper, 
namely the actual system is divided into two systems of equations, which are separately studied, in order to use
 a fixed point argument.
In order to be able to use this machinery we establish some auxiliary results in Section \ref{tdt}.
Then, Section \ref{auxtdt} is devoted to the existence of the two auxiliary problems, where a special care is taken in determining the quantitative estimates.
Finally, Section \ref{fpthm} is concern to the proof of the main result.

\section{Statement of the fuel cell problem}
\label{spemfc}

Let \(\Omega\) be a bounded multidomain   of \(\mathbb{R}^n\), \(n\geq 2\), that is,
 the domain \(\Omega\) is  a connected open set, which consists of different pairwise disjoint  Lipschitz subdomains.
Precisely, it is separated into five regions,  
\({\Omega}_\mathrm{fuel} \), \({\Omega}_\mathrm{a} \),  \({\Omega}_\mathrm{m} \), \({\Omega}_\mathrm{c} \) and \({\Omega}_\mathrm{air} \),
with total width  \(W=2 l_f + 2l_a+l_m\).
The multidomain \(\Omega\) represents one single PEM fuel cell,
 which its 2D (\(xy\) cross-section) representation is schematically illustrated in Figure~\ref{fpem}.
Moreover, it has the   membrane interface \(\Gamma_\mathrm{CL}=\Gamma_\mathrm{a}\cup\Gamma_\mathrm{c}\) and
the porous-fluid boundary \(\Gamma\),  of \((n-1)\)-dimensional Lebesgue measure.
\begin{figure}
 \includegraphics[width=0.5\textwidth]{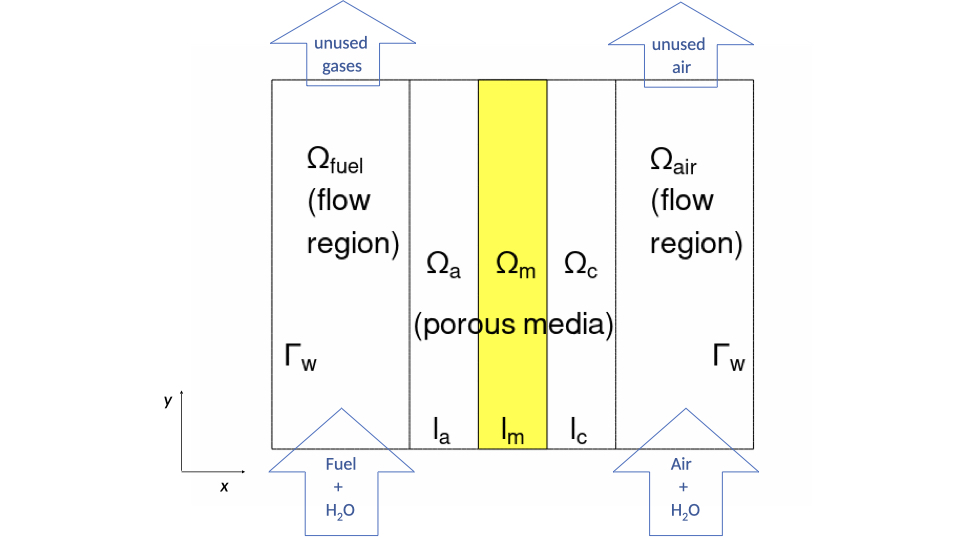} 
\caption{The flow region \(\Omega_\mathrm{f}= \Omega_\mathrm{fuel}\cup \Omega_\mathrm{air}\) and
 the porous region \(\Omega_\mathrm{p}
= \Omega_\mathrm{a}\cup\overline{\Omega}_\mathrm{m}\cup\Omega_\mathrm{c}\)
(not in scale), with width \(l_\mathrm{a}+l_\mathrm{m}+l_\mathrm{c}<< L\) 
where \(L=1-10\,\si{\centi\metre}\) denotes each channel length. }\label{fpem}
\end{figure}

We call by the fluid bidomain  \({\Omega}_\mathrm{f} \) the two channels,
namely the anodic fuel channel \(\Omega_\mathrm{fuel}\) and the cathodic air channel \(\Omega_\mathrm{air}\).
Each channel has a typical characteristic length \(l_f=\SI{0.001}{\metre}\) \cite{li2011,zhang2022}.

We call by the porous domain  \({\Omega}_\mathrm{p} \) 
 the proton conducting membrane  \(\Omega_\mathrm{m}\),  the anode and cathode backing  layers,  \({\Omega}_\mathrm{a} \) and \({\Omega}_\mathrm{c} \),
and  the anode and cathode catalyst layers (CL),  \(\Gamma_\mathrm{a}\) and \(\Gamma_\mathrm{c}\), respectively.
The backing layers are porous gas diffusion layers (GDL), with fuel in the anodic compartment and air in the cathodic compartment,where
the traveling of the free electrons occurs and a current collector \(\Gamma_\mathrm{cc}\) is attained.
The catalyst layers have negligible measure when compared with the backing layers
 (the backing layers are approximately \(l_a=l_c=\SI{200}{\micro\metre}\)  in thickness,
 while the catalyst layers are \SIrange{5}{10}{\micro\metre} \cite{li2011,zhang2022}), and then they are assumed to be
  interfaces between the membrane separator and the backing layers.
Hereafter,  the subscripts, a and c, stand for anode and  cathode, respectively.

The porous-fluid boundary is the interface  \(\Gamma=\partial\Omega_\mathrm{f}\cap\Omega=\partial\Omega_\mathrm{p}\cap\Omega\).

\subsection{In the fluid bidomain  \({\Omega}_\mathrm{f}= \Omega_\mathrm{fuel}\cup \Omega_\mathrm{air} \)}

By the characteristics of the channels, the convection for fluid and heat flows may be neglected.

The governing equations are the conservation of mass, momentum,  species
and energy, a.e. in \(\Omega_\mathrm{f}\),
\begin{align}
\nabla\cdot(\rho \mathbf{u})=0; \label{mass}\\
 \nabla  \cdot\tau = \nabla p ; \label{momentum}\\
\nabla\cdot(\mathbf{u} \rho_i)+\nabla\cdot \mathbf{j}_i=0; \label{species}\\
\label{heateqs}
\nabla\cdot \mathbf{q} =0,
\end{align}
for the uncharged species \(i=1,\cdots,\mathrm{I}\).
 The unknown functions are the density \(\rho\), the velocity \(\mathbf{u}=(u_x,u_y,u_z)\),
 the mass concentration vector \(\bm{\rho}=(\rho_1,\cdots, \rho_\mathrm{I})\) and the temperature \(\theta\).
Each  partial density is defined by
\begin{equation}\label{defrhoi}
\rho_i = M_ic_i,
\end{equation}
where 
\( M_i\) denotes the molar mass [\si{\kilogram\per\mole}] and 
\(c_i\) is the  molar concentration [\si{\mole\per\cubic\metre}] of the species  \(i\).
The following values are known: \(M(\mathrm{H}_2\mathrm{O})= \SI{18}{\gram\per\mole}\),
 \(M(\mathrm{O}_2)=\SI{32}{\gram\per\mole}\)  and \(M(\mathrm{H}_3\mathrm{O}^+)=\SI{1}{\gram\per\mole}\).
For the H\(_2\)PEMFC, \(M(\mathrm{H}_2)= \SI{2}{\gram\per\mole}\),  while for DMFC, \(M(\mathrm{CH}_4\mathrm{O}) = \SI{32}{\gram\per\mole}\).

The deviatoric stress tensor  \( \tau\), which is temperature dependent, obeys the constitutive law 
\begin{equation}\label{diff}
\tau =\mu(\theta)D\mathbf{u}+\lambda(\theta) \mathrm{tr}(D\mathbf{u}) \mathsf{I},
\qquad \mathrm{tr}(D\mathbf{u})=\mathsf{I}:D\mathbf{u}= \nabla\cdot\mathbf{u},
\end{equation}
where \( D=(\nabla+\nabla^T)/2\) denotes the symmetric gradient and  \( \mathsf{I}\) denotes the identity (\(n\times n\))-matrix.
The viscosity coefficients  \( \mu\) and \( \lambda\) are   in accordance with the second law of thermodynamics
\begin{equation}\label{2law}
\mu(\theta)>0,\quad \nu(\theta) :=\lambda(\theta)+\mu(\theta)/ n \geq 0,
\end{equation}
with \(\nu\) denoting the bulk (or volume) viscosity  and \(\mu/2\) being the shear (or dynamic) viscosity.
Taking into account the convention on implicit summation over repeated indices,  we denote \(\zeta:\varsigma=\zeta_{ij}\varsigma_{ij}\).

We assume that the anode and cathode gas mixtures with water vapor act as ideal gases \cite{li2011}, that is,
the pressure \(p\) obeys the Boyle--Marriotte law
\begin{equation}\label{boyle}
p = R_M\rho\theta,
\end{equation} 
where \( R_M=R/M\) with \( M\) denoting the molar mass [\si{\kilogram\per\mole}].

The  phenomenological fluxes,  \(\mathbf{j}_{i}\)  [\si{\kilogram\per\second\per \square\metre}] and \(\mathbf{q}\) [\si{\watt\per \square\metre}],
  are explicitly driven by 
\begin{align}\label{defjif}
\mathbf{j}_{i} &= - D_{i} (\theta) \nabla \rho_{i} -
 \sum_{\genfrac{}{}{0pt}{2}{j=1}{j\not= {i}}}^\mathrm{I} D_{{i}j} (\theta) \nabla \rho_j -\rho_{i}S_{i}( c_{i} ,\theta) \nabla\theta  
; \\ \label{defqf}
\mathbf{q} &= -R\theta^2 \sum_{j=1}^\mathrm{I} D'_j ( c_j, \theta) \nabla c_j -k (\theta) \nabla\theta ,
\end{align}
with \(i=1,\cdots,\mathrm{I} \), see \cite{lap2017} and the references therein.
These include
the Fick law (with the diffusion coefficient \(D_i\) [\si{\metre\squared\per\second}]),
  the Fourier law (with  the thermal conductivity  \(k\)  [\si{\watt\per\metre\per\kelvin}]),
and the Dufour--Soret  cross effect (with the Dufour coefficient \(D'_i\)  [\si{\metre\squared\per\second\per\kelvin}]
 and the Soret coefficient \(S_i\)   [\si{\metre\squared\per\second\per\kelvin}]).
While in binary liquid mixtures the Dufour  effect is negligible, 
in binary gas mixtures the Dufour effect can be significant \cite{HollingerLucke1995}.
The universal constant is the so-called the gas constant
 \(R= \SI{8.314}{\joule\per\mole\per\kelvin}\).

 Hereafter the subscript \(i\) stands for the correspondence to the ionic component \(i=1,\cdots,\mathrm{I}\) 
intervened in the reaction process, with \(\mathrm{I}\in\mathbb{N}\) being either
 \(\mathrm{I}_\mathrm{p}\) whenever \(\Omega_\mathrm{p}\) or  \(\mathrm{I}_\mathrm{f}\) whenever \(\Omega_\mathrm{f}\).
For the sake of simplicity,  we consider the number of species
\(\mathrm{I} = \mathrm{I}_\mathrm{a} =\mathrm{I}_\mathrm{m} =\mathrm{I}_\mathrm{c} =2\)  (cf. Table~\ref{table1}).
 \begin{table}[h]
 \begin{tabular}{|c|c|c|c|}
\hline  \(i\)&  \(\Omega_\mathrm{fuel}\cup\Omega_\mathrm{a}\) & \(\Omega_\mathrm{m}\)  &  \(\Omega_\mathrm{air}\cup\Omega_\mathrm{c}\)    
 \\
\hline 
1 & fuel & H\(_3\)O\(^+\) &   O\(_2\) \\ 
2 &H\(_2\)O &H\(_2\)O  &H\(_2\)O  \\ 
\hline 
\end{tabular} 
\caption{The correspondence of each component to each region}\label{table1}
 \end{table}

 The water  is present in  fluid and vapor  states, and in both cases it can be modeled as
a Newtonian fluid (linearly viscous fluid).

\subsection{In the porous domain  \( \Omega_\mathrm{p}=\Omega_\mathrm{a}\cup\overline{\Omega}_\mathrm{m}\cup\Omega_\mathrm{c}\)}

The governing equations,  after a volume averaging procedure, are
\begin{align}
\nabla  \cdot \mathbf{u}_\mathrm{D} &=0 
; \label{momentump}\\
\nabla\cdot \mathbf{j}_i &=0; \label{speciesp}
 \\ \label{heatp}
\nabla\cdot \mathbf{q}  &=Q, \mbox{ a.e. in } \Omega_\mathrm{a}\cup\Omega_\mathrm{m}\cup\Omega_\mathrm{c},
\end{align}
for   \(i=1,2 \)  and \(\mathrm{I}=2\),  according to Table~\ref{table1}.
Here, it   is omitted the bracket \(\langle\cdot  \rangle\),  which usually represents  the volume averaged. Thus,
 the temperature \(\theta\) is the spatially averaged (over a representative elementary volume) microscopic quantity, 
and the Darcy velocity \(\mathbf{u}_\mathrm{D}\) [\si{\metre\per\second}] is the superficial average quantity.

 The volume averaged density  \(\rho\) 
 of the fluid is piecewise  constant, \(\rho_\mathrm{water}=\SI{970}{\kilogram\per\cubic\metre}\) in \(\Omega_\mathrm{m}\)
and \(\rho_\mathrm{air}=\SI{0.995}{\kilogram\per\cubic\metre}\)  in \(\Omega_\mathrm{a}\cup\Omega_\mathrm{c}\),
  due to \(\rho_\mathrm{air} =p_\mathrm{atm}M_\mathrm{air}/(R\theta_r)\), 
at the typical operating temperature of \(\theta_r = \SI{357.15}{\kelvin}\) (= \SI{84}{\celsius}),
\(p_\mathrm{atm} =\SI{101.325}{\kilo\pascal}\) and
\(M_\mathrm{air} =\SI{28.97}{\gram\per\mole}\).

The  Darcy velocity \(\mathbf{u}_\mathrm{D}\) obeys
\begin{equation}\label{darcyg}
\mu\mathbf{u}_\mathrm{D}=- K_g \nabla p 
\end{equation}
where 
\(p\)  is the intrinsic average  pressure [\si{\pascal}] and
 \(\mu=\mu(\theta)\) denotes the viscosity [\si{\pascal\second}].  In the Darcy equation \eqref{darcyg}, the gravity is neglected and
 \(K_g\) represents the gas permeability [\si{\metre\squared}]. It is known that the gas permeability depends on the fiber diameter,
and the Carman--Kozeny equation is commonly used \cite{seca}.
The permeability  should include Klinkenberg effect due to the behavior of
gas flow in porous media, \textit{i.e.}  it obeys the Klinkenberg equation
 \begin{equation}\label{defKg}
 K_g=K_l\left(1+\frac{b}{p}\right).
 \end{equation}
The Klinkenberg correction \(b>0\) in \(\Omega_\mathrm{a}\cup\Omega_\mathrm{c}\)
 depends on space and temperature through the porosity,  \(b=0\) in \(\Omega_\mathrm{m}\),
and \(K_l>0\) being the liquid permeability of the porous media  that only depends on the porosity. Therefore,  \(b\geq 0\) and \(K_l\) are constant.

The  phenomenological fluxes,  \(\mathbf{j}_{i}\)  [\si{\kilogram\per\second\per \square\metre}] and \(\mathbf{q}\) [\si{\watt\per \square\metre}],
  are explicitly driven by 
\begin{align}\label{defji}
\mathbf{j}_{i} &= - D_{i} (\theta) \nabla \rho_{i} -D_{{i}j} (\theta) \nabla \rho_j -\rho_{i}S_{i}( c_{i} ,\theta) \nabla\theta  -u_{i} \rho_{i}\nabla\phi
; \\ \label{defq}
\mathbf{q} &= -R\theta^2 \sum_{j=1}^2 D'_j ( c_j, \theta) \nabla c_j -k (\theta) \nabla\theta -\Pi (\theta) \sigma (\mathbf{c},\theta) \nabla\phi ,
\end{align}
with \(i,j=1,2 \), and \(j\not= i\).
The  phenomenological fluxes are explicitly driven by the gradients of  the temperature  \(\theta\) and
 the mass concentration vector \(\bm{\rho}\), in the form (up to some temperature and concentration dependent factors)
as in \eqref{defjif}-\eqref{defqf}, altogether  by the gradient of  the electric potential \(\phi\),
by incorporating
the Peltier--Seebeck cross effect. We remind that the Peltier coefficient \(\Pi\) [\si{\volt}] and the Seebeck coefficient 
 \(\alpha_\mathrm{S}\)  [\si{\volt\per\kelvin}]  are correlated by the first Kelvin relation
\begin{equation}\label{kelvin}
\Pi(\theta) =\theta\alpha_\mathrm{S}(\theta).
\end{equation}

For  the ionic component \(i=\) H\(_3\)O\(^+\),  the proton flux \(\mathbf{J}_i
=\mathbf{j}_i/M_i\) [\si{\mole\per\second\per \square\metre}]   obeys \eqref{defji}  in \(\Omega_\mathrm{m}\),
where in the first term \(D_i=\kappa/(z_iF)\), with
 the proton ionic conductivity \(\kappa\) being no constant in accordance with the membrane did not being fully hydrated. 
The universal constant is the so-called Faraday constant  \(F= \SI{9.6485e4}{\coulomb\per\mole}\).

For  the dissolved water \(i=\) H\(_2\)O, the molar flux \(\mathbf{J}_i\) obeys \eqref{defji} in \(\Omega_\mathrm{m}\),
 where the second term  means the electro-osmosis (\(j\not= i\)), 
 with  \(D_{ij}=n_\mathrm{d}\) representing the electro-osmostic drag coefficient \cite{li2011}.
Moreover, \(z_{\mathrm{H}_2\mathrm{O}} =0\) and \(u_2=0\).
 
The  Darcy velocity as a drift velocity does not have the relevance as in \eqref{species}, and the drift term may be neglected in \eqref{speciesp}.
Indeed, the drift velocity   appears  in the last term in \eqref{defji} as
\[
u_i\mathbf{E} \quad (\mathbf{E}=-\nabla\phi),
\]
where \(\mathbf{E}\) stands for electric field strength in \(\Omega_\mathrm{m}\)
and the ionic mobility \(u_i\)  [\si{\metre\squared\per\second\per\volt}] satisfies the Nernst--Einstein relation 
\begin{equation}\label{mobility}
u_i= |z_i| F D_i/(R\theta) =\kappa(\theta)/(R\theta),
\end{equation}
which does not vanish for the valence of species \(z_{\mathrm{H}^+}=1\).

In the energy equation \eqref{heatp},  the Joule effect 
\begin{equation}\label{joule}
Q=\chi_{\Omega_\mathrm{a}\cup\Omega_\mathrm{c}} \sigma(\mathbf{c},\theta)  | \nabla\phi|^2
\end{equation}
takes into account that the effect of flow velocity is negligible when
compared to the electrical current that exists in \(\Omega_\mathrm{a}\cup\Omega_\mathrm{c}\).
 
 The electric current density \(\mathbf{j}\)  [\si{\ampere\per \square\metre}]
is given by the Ohm law  (with  the electrical conductivity  \(\sigma\)  [\si{\siemens\per\metre}])
\begin{equation}\label{ohm}
\mathbf{j} = -\sigma(\mathbf{c},\theta) \nabla\phi \quad\mbox{  in }\Omega_\mathrm{a}\cup\Omega_\mathrm{c},
\end{equation}
and it verifies
\begin{equation}\label{electric}
\nabla\cdot\mathbf{j}=0 \quad\mbox{ a.e. in } \Omega_\mathrm{a}\cup\Omega_\mathrm{c} .
\end{equation}
In the fuel cell model,  the electric potential is given at the membrane interface (cf.  Subsection \ref{interface}).
Notice that there is no  electric current density in \(\Omega_\mathrm{m}\), \textit{i.e.} 
 the electric flux  \(\mathbf{j}\)  is the ionic current density \( \mathbf{j}_\mathrm{m}\)  that verifies 
\(\mathbf{j}_\mathrm{m}= z_{\mathrm{H}^+} F \mathbf{J}_{\mathrm{H}^+}\). In practice, the flow indeed obeys the constitutive law
\begin{equation}
\mathbf{j}_m = -\kappa (\theta) \nabla \rho_1/M_1 -\alpha_\mathrm{S} (\theta) \sigma _m(\rho_2,\theta) \nabla\theta -\sigma_m(\rho_2,\theta) \nabla\phi ,
\end{equation}
where the parameters are well determined. 
For instance,   the proton conductivity \(\sigma_m\) [\si{\siemens\per\metre}] may be water content and temperature dependent in contrast with the 
ionomer Nafion constant assumed in  \cite{pemfc}.

\subsection{On the outer boundary \(\partial\Omega\)}

The boundary of \(\Omega\)   is constituted by three pairwise disjoint open \((n-1)\)-dimensional sets, namely 
 \(\Gamma_\mathrm{in}\),  \(\Gamma_\mathrm{out}\) and \(\Gamma_\mathrm{w}\)
which represent the inlet, outlet and wall boundaries, respectively, 
\[
\partial\Omega= \Gamma_\mathrm{in}\cup \Gamma_\mathrm{out}\cup  \overline \Gamma_\mathrm{w}.
\]
The wall boundary has a subpart  \(\Gamma_\mathrm{cc}\subset \partial\Omega_\mathrm{p}\)
 that stands for the  current collector, meaning that the remaining  wall boundary is electrical current  insulated.
 The inlet and outlet sets are the union of two disjoint connected open  \((n-1)\)-dimensional sets, namely,
 \begin{align*}
 \Gamma_\mathrm{in} &= \Gamma_\mathrm{in,a}\cup  \Gamma_\mathrm{in,c} ; \\
 \Gamma_\mathrm{out} &= \Gamma_\mathrm{out,a}\cup  \Gamma_\mathrm{out,c},
\end{align*}
corresponding to the anodic and cathodic channels, \(\Omega_\mathrm{fuel}\) and \(\Omega_\mathrm{air}\) (cf. Figure~\ref{fpem}). 

On the wall boundary \(\Gamma_\mathrm{w} \), 
the no outflow boundary conditions are considered to the velocity  and the species,  
\begin{equation}
\mathbf{u}\cdot\mathbf{n}=(\rho_i \mathbf{u} + \mathbf{j}_i) \cdot\mathbf{n}=0 
\quad (i=1,\cdots,\mathrm{I}).\label{noflow}
\end{equation}
Hereafter,  \(\mathbf{n}\) denotes the outward unit normal to \(\partial\Omega\).

On the inlet and outlet boundaries \(\Gamma_\mathrm{in}\cup\Gamma_\mathrm{out}\),
 the velocity, the partial densities and the temperature are specified.
Due to the characteristics of the  domain, the inlet velocity is constantly specified on the \(y\) direction.
\begin{itemize}
\item for a.e. \((x,0,z)\in \Gamma_\mathrm{in}\):
\begin{align*}
\mathbf{u}(x,0,z) &= u_\mathrm{in}\mathbf{e}_y\equiv (0,u_\mathrm{in},0) ;\\
\rho_i(x,0,z) &=\rho_{i,\mathrm{in}};\\
 \theta(x,0,z) &=\theta_\mathrm{in}. 
\end{align*}
\item for a.e. \((x,L,z)\in \Gamma_\mathrm{out}\):
\begin{align*}
\mathbf{u}(x,L,z) &= \mathbf{u}_\mathrm{out};\\
\rho_i(x,L,z) &=\rho_{i,\mathrm{out}};\\
 \theta(x,L,z)& =\theta_\mathrm{out}. 
\end{align*}
\end{itemize} 
We refer to \cite{pemfc}, in where the homogeneous Dirichlet condition is assumed, whenever
 the general case for prescribed partial densities and temperature can be handled by subtracting  background
profile that fits the specified  functions.

On the current collector wall boundary \(\Gamma_\mathrm{cc}\),
 the electric potential is prescribed through the cell voltage
\(E_\mathrm{cell}=\phi|_{\Gamma_\mathrm{cc,c}} - \phi|_{\Gamma_\mathrm{cc,a}} \), that means
\begin{equation}\label{Ecell}
\phi=E_\mathrm{cell} \mbox{ on } \Gamma_\mathrm{cc,c}\quad\mbox{and}\quad \phi=0
\mbox{ on } \Gamma_\mathrm{cc,a}.
\end{equation}
This reflects the movement of the electrons in the GDLs, namely  \(\Omega_\mathrm{a}\cup \Omega_\mathrm{c}\),  in the negative \(x\) direction.
Although the fuel reactions release approximately 1.5 joules per coulomb of electronic charge transferred and thus can be assigned a potential of 
\SI{1.5}{\volt} \cite{wetton},
\(E_\mathrm{cell}\) is known to around \SI{0.9}{\volt} \cite{sheng}.

On the remaining  wall boundary \(\Gamma_\mathrm{w}\setminus\Gamma_\mathrm{cc}\),
 the no outflow \( \mathbf{j} \cdot\mathbf{n}=0\) is considered.

Finally, the Newton law of cooling, which is
mathematically known as  the Robin-type boundary condition, is considered
\begin{equation}\label{newton}
 \mathbf{q}\cdot\mathbf{n} = h_c(\theta)(\theta-\theta_e) \mbox{ on } \Gamma_\mathrm{w} ,
\end{equation}
where \(h_c\) denotes the conductive heat transfer coefficient, which may 
 depend both on the spatial variable and the temperature function \(\theta\),
 and  \(\theta_e\) denotes the external coolant stream temperature at the wall.

\subsection{On the fluid-porous  interface  \(\Gamma\)}
\label{porousfluidbd}

The unit outward  normal \(\mathbf{n}\) to the interface boundary \(\Gamma\) pointing from the fluid region to the porous
medium is \(\mathbf{e}_x\) on int\(\left(\partial\Omega_\mathrm{fuel}\cap \partial \Omega_\mathrm{a}\right)\)
and \( - \mathbf{e}_x\) on int\(\left(\partial\Omega_\mathrm{air}\cap \partial \Omega_\mathrm{c}\right)\).

We consider the continuity of mass flux, a  constant interface temperature,
 and the balance of normal Cauchy stress vectors (namely,   \(\sigma_{fN}+\sigma_{pN}=0\))
\begin{align}
\mathbf{u}\cdot\mathbf{e}_x &= \mathbf{u}_\mathrm{D}\cdot\mathbf{e}_x;\label{cmf}\\
 \theta_\mathrm{f} &=\theta_\mathrm{p}; \label{ttfp} \\
(\tau \cdot \mathbf{e}_x)\cdot\mathbf{e}_x &= [p] := p_\mathrm{f}-p_\mathrm{p} ,\label{cns}
\end{align}
where \([\cdot]\) denotes the jump of a quantity across the interface in direction to the fluid medium.
 The condition \eqref{cmf} guarantees that the exchange of fluid between the two domains is conservative. 

We assume the fluid flow is almost parallel to the interface
 and the Darcy velocity is much smaller than the slip velocity. Thus,
 the Beavers--Joseph--Saffman (BJS)  interface boundary condition may be considered \cite{gurau}
\begin{equation}
(\tau \cdot\mathbf{n})\cdot \mathbf{e}_j =-\beta \mathbf{u} \cdot\mathbf{e}_j\qquad (j=y,z) \label{bj}
\end{equation}
where the coefficient \(\beta=\alpha_{BJ} K^{-1/2}>0\) denotes the Beavers--Joseph slip coefficient,
 with \(\alpha_{BJ}\) being dimensionless and characterizing the nature of the porous surface.

The heat transfer transmission  is  completed by  the continuous heat flux condition
\begin{equation}
\label{bcsp}
\mathbf{q}_\mathrm{f} \cdot\mathbf{e}_x= -\mathbf{q}_\mathrm{p}\cdot\mathbf{e}_x .
  \end{equation}

Finally,  the potential is assumed to be neglected
\begin{equation}\label{phi0}
\phi=0\mbox{ on } \Gamma \cap\partial\Omega_\mathrm{a},
\end{equation}
while on \(\Gamma \cap\partial\Omega_\mathrm{c}\) it is simply assumed \(\nabla\phi\cdot\mathbf{n} = - \partial_x\phi =0\).

\subsection{On the  membrane interface \(\Gamma_\mathrm{CL}=\Gamma_\mathrm{a}\cup\Gamma_\mathrm{c}\)}
\label{interface}

The unit outward  normal  \(\mathbf{n}\) to the interface boundary \(\Gamma_\mathrm{CL}\) pointing from the backing layers to the proton conducting
membrane is \(\mathbf{e}_x\) on \(\Gamma_\mathrm{a}\)
and \( - \mathbf{e}_x\) on \(\Gamma_\mathrm{c}\).

The overall  balanced cell reactions are
\begin{description}
\item[H\(_2\)PEMFC] 
\(
2\mathrm{H}_2+ \mathrm{O}_2\rightarrow 2\mathrm{H}_2\mathrm{O}\), \(  E^0_\mathrm{cell}=\SI{1.23}{\volt};
\)

\item[DMFC]
\(
2\mathrm{CH}_4\mathrm{O}+3\mathrm{O}_2\rightarrow 2\mathrm{CO}_2+4\mathrm{H}_2 \mathrm{O},\)  \( E^0_\mathrm{cell}=  \SI{1.21}{\volt} ,
\)
\end{description}
 which are the
sum of two electrochemical reactions (so called half cell reactions) that occur at the electrodes.

On \(\Gamma_\mathrm{a}=\partial\Omega_\mathrm{a}\cap\overline{\Omega}_\mathrm{m}\),
 it occurs the oxidation reaction of the fuel, that is,
 \[  
 \mathbf{j}_1\cdot\mathbf{e}_x= -\frac{s_1M(\mathrm{fuel})}{nF} j_{a} \quad \mbox{ a.e. on }\Gamma_\mathrm{a} ,
 \]  
where \(n\) stands for the number of electrons that participate in the half cell reaction and  \(s_1\) is  the anodic stoichiometry number.
 
On \(\Gamma_\mathrm{c}=\partial\Omega_\mathrm{c}\cap\overline{\Omega}_\mathrm{m}\),
 it occurs the oxygen reduction reaction, that is,
 \[  
 \mathbf{j}_1\cdot\mathbf{e}_x= -\frac{s_1M(\mathrm{O}_2)}{nF} j_{c} \quad \mbox{ a.e. on }\Gamma_\mathrm{c} ,
 \]  
 with  the cathodic stoichiometry number \(s_1\).

Thus, the electric current may be modeled by 
\begin{equation}\label{BVeq}
\mathbf{j}\cdot\mathbf{n} =j_\ell(\eta_\ell) \quad \mbox{ a.e. on }\Gamma_\ell  ,\quad ( \ell = \mathrm{a,c}) ,
\end{equation}
where the reaction rates \(j_\ell\) [\si{\ampere\per\square\metre}] are given by
\begin{equation}\label{defeta}
j_\ell (\eta)=\left\lbrace
\begin{array}{ll}j_{\ell,L}  \frac{2 j_{\ell,0}  \sinh [ \eta /B_\ell]}{ j_{\ell,L} +2  j_{\ell,0}  \sinh[ \eta/B_\ell]} &\quad\mbox{ if }\eta\geq 0\\
-j_\ell(-\eta) &\quad\mbox{ if } \eta<0
\end{array}\right.
\end{equation}
with \(B_\ell = R\theta_\ell/F\) being the Tafel slope at  \(\ell= a, c\),
for  some reference temperatures \(\theta_a\) and \(\theta_c\). Here, it is considered that
 \(\eta_\ell =\phi_\ell-\phi_m-\phi_r\) stands for the overpotential (\(\ell = a, c\)), for some reference potential \(\phi_r\),
the limiting current \(j_{\ell,L}\),  and  some
  \(j_{\ell,0}>0\) only spatial dependent being such that \(j_{\mathrm{a},0} > >j_{\mathrm{c},0}\)  (\(j_{\mathrm{a},0} =\SI{1800}{\ampere\per\metre\squared}\)
and \( j_{\mathrm{c},0} =\SI{0.0132}{\ampere\per\metre\squared}\)  \cite{suhpark}).

We emphasize that the experimental potential  jumps at the interface, \textit{i.e.} the anodic and the cathodic overpotentials are, respectively,
 \(\eta_a < 0 \) and \(\eta_c > 0\).
The modeling \eqref{BVeq}-\eqref{defeta} avoids the existence of infinitely many non-trivial solutions that happens on the Steklov problem
\cite{pagani}.

For the discussion of the Butler--Volmer and Bernardi--Verbrugge boundary conditions, we may refer to \cite{dick}.

\section{Variational formulation and main result}
\label{framework}

In the framework of Sobolev and Lebesgue functional spaces, for \(r>1\), we introduce the following spaces of test functions
\begin{align*}
\mathbf{V}(\Omega_f) =& \{ \mathbf{v}\in \mathbf{H}^1(\Omega_\mathrm{f}):\ 
 \mathbf{v}=\mathbf{0}\mbox{ on }\Gamma_\mathrm{in}\cup\Gamma_\mathrm{out}; \
 \mathbf{v}\cdot\mathbf{n}=0\mbox{ on }\Gamma_\mathrm{w} \};\\
H(\Omega_p) =& \{v\in L^{2}(\Omega_\mathrm{p}):\  v_a :=v|_{\Omega_\mathrm{a}}\in H^{1}(\Omega_\mathrm{a}) , \ 
 v_c :=v|_{\Omega_\mathrm{c}}\in H^{1}(\Omega_\mathrm{c}) ,\\
&\   v_m :=v|_{\Omega_\mathrm{m}}\in H^{1}(\Omega_\mathrm{m}) ,
\ v_a =v_m \mbox{ on }\Gamma_\mathrm{a}, \  v_c =v_m \mbox{ on }\Gamma_\mathrm{c}\} ;\\
V(\Omega) =& \{v\in L^{2}(\Omega ):\  v_\ell:=v|_{\Omega_\ell}\in H^{1}(\Omega_\ell) , \ \ell=\mathrm{a},\mathrm{c}, \mathrm{f},
\   v_m :=v|_{\Omega_\mathrm{m}}\in H^1(\Omega_\mathrm{m}), \\
&\ v_\ell =v_m \mbox{ on }\Gamma, \ \ell=\mathrm{a}, \mathrm{c},  \  v=0\mbox{ on }\Gamma_\mathrm{in}\cup\Gamma_\mathrm{out} \};\\
V(\Omega) =& \{ v\in H^{1}(\Omega):\ v=0\mbox{ on }\Gamma_\mathrm{in}\cup\Gamma_\mathrm{out} \};\\
V_r(\Omega_p) =& \{ v\in W^{1,r}(\Omega_\mathrm{p}):\ v=0\mbox{ on }\Gamma_\mathrm{cc}  \cup (\Gamma  \cap\partial\Omega_\mathrm{a}) \};\\
H(\Omega) =& \{v\in H^{1}(\Omega):\ v_f:=v|_{\Omega_\mathrm{f}},\  v_p:=v|_{\Omega_\mathrm{p}},\
 v_f=v_p \mbox{ on }\Gamma\} , 
\end{align*}
with their usual norms. 
Considering that the Poincar\'e inequality occurs whenever the trace of the function vanishes on a part with positive measure of the boundary \(\partial\Omega\),
then the  Hilbert spaces, \( \mathbf{V}(\Omega_f)\), \( V_2(\Omega_p) \) and  \(V(\Omega) \),  are  endowed with the standard seminorms (cf. Section \ref{tdt}). 
We denote \(V(\Omega_p)=V_2(\Omega_p)\), for the sake of simplicity.

The fuel cell problem, which its strong formulation is  stated in Section \ref{spemfc}, is equivalent to the following variational formulation.
\begin{definition} \label{dwt}
We say that the function \((\mathbf{u},p,\bm{\rho},\theta,\phi)\) is a weak solution to the fuel cell problem,
  if it satisfies the following variational formulations to
  \begin{itemize} 
\item  the momentum conservation (Beavers--Joseph--Saffman/Stokes--Darcy problem)
\begin{align} \label{motionwbj} 
\int_{\Omega_\mathrm{f}}\mu(\theta)D\mathbf{u}:D\mathbf{v}  \dif{x}+
\int_{\Omega_\mathrm{f}}\lambda(\theta)\nabla\cdot\mathbf{u}\nabla\cdot\mathbf{v}  \dif{x} 
+ \int_{\Omega_\mathrm{p}}  \frac{K_g(p)}{\mu(\theta)}\nabla p\cdot\nabla v\dif{x} \nonumber \\
+\int_{\Gamma} \beta(\theta)\mathbf{u}_T\cdot\mathbf{v} _T\dif{s}+\int_{\Gamma }  p\mathbf{v}\cdot\mathbf{n} \dif{s}
-\int_{\Gamma }  \mathbf{u}\cdot\mathbf{n}v \dif{s}\nonumber \\
=R_M\int_{\Omega_\mathrm{f}} \rho \theta \nabla\cdot\mathbf{v} \dif{x}, 
\end{align}
 holds for all \((\mathbf{v},v)\in  \mathbf{V}(\Omega_f)\times H(\Omega_p)\).
Here,  \(\rho = \rho_1+\rho_2\).

\item the species conservation (\(i=1,2\))
\begin{align}
\int_{\Omega_\mathrm{f}}  \rho_1 \mathbf{u}\cdot\nabla  v\dif{x} + 
\int_{\Omega} D_1(\theta )\nabla \rho_1 \cdot\nabla v\dif{x} 
 +\frac{F}{R}\int_{\Omega_\mathrm{m}} \psi( \rho_1) \frac{D_{1} (\theta) }{ \theta } \nabla\phi \cdot\nabla v\dif{x}  \nonumber \\
+ \int_{\Omega_\mathrm{m}} D_{12}( \theta )\nabla \rho_2 \cdot\nabla v\dif{x} + 
\int_{\Omega} \rho_1 S_{1}(\rho_1, \theta )\nabla \theta\cdot\nabla v \dif{x} = 0;   \label{wvf1} \\
\int_{\Omega_\mathrm{f}}  \rho_2 \mathbf{u}\cdot\nabla  v\dif{x} + 
\int_{\Omega} D_2(\theta )\nabla \rho_2 \cdot\nabla v\dif{x} 
+ \int_{\Omega_\mathrm{m}} n_\mathrm{d}( \theta )\nabla \rho_1 \cdot\nabla v\dif{x}  \nonumber \\ + 
\int_{\Omega} \rho_2S_{2}(\rho_2,\theta )\nabla \theta\cdot\nabla v \dif{x}  = 0,   \label{wvf2}
\end{align}
 holds for all  \(v\in V(\Omega)\). Here, we set
\[
\psi(z)=\left\{\begin{array}{ll}
z&\mbox{ if } 0\leq z\leq \rho_{1,m}\\
0&\mbox{ otherwise}
\end{array}\right. 
\]
for some \(\rho_{1,m}>0\).
 
\item the energy conservation
\begin{align}\label{wvfi1}  
\int_{\Omega}  k(\theta)\nabla\theta \cdot\nabla v\dif{x} 
+ \int_{\Gamma_\mathrm{w}} h_c(\theta)\theta v \dif{s}  \nonumber \\
+\sum_{j=1}^{2 } \frac{R}{M_j}
\int_{\Omega} \theta^2 D_{j}'( \rho_j,\theta )\nabla \rho_j \cdot\nabla v\dif{x} +
\int_{\Omega_\mathrm{m}} \Pi(\theta) \sigma_m(\rho_2,\theta )\nabla\phi \cdot\nabla v\dif{x} \nonumber \\
 = \int_{\Gamma_\mathrm{w}}  h_c(\theta)\theta_e  v \dif{s} 
+\int_{\Omega_\mathrm{a}\cup\Omega_\mathrm{c}} \sigma( \bm{\rho},\theta ) |\nabla\phi|^2v\dif{x} ,
\end{align}
 holds for all  \(v\in  V(\Omega)\).
 
\item  the electricity conservation  
\begin{align}\label{wvfphi}
\int_{\Omega_\mathrm{p}}\sigma ( \bm{\rho},\theta )\nabla\phi\cdot\nabla w\dif{x}
 + \frac{1}{M_1} \int_{\Omega_\mathrm{m}}  \kappa(\theta )\nabla \rho_1 \cdot \nabla w \dif{x}  \nonumber \\ 
+ \int_{\Omega_\mathrm{m}} \alpha_\mathrm{S}(\theta) \sigma_m( \rho_2, \theta )  \nabla \theta \cdot \nabla w \dif{x} \nonumber\\ 
+\int_{\Gamma_\mathrm{a}} j_a(\phi_a-\phi_m) (w_a-w_m)\dif{s}
+  \int_{\Gamma_\mathrm{c}}j_c(\phi_c- \phi_m-E_\mathrm{cell}) (w_c-w_m) \dif{s} =0,
\end{align}
holds for all  \(w\in V(\Omega_p) \). 
 \end{itemize}
\end{definition}
Hereafter,  the notation \(\dif{x}\) refers to the 2D \(\dif{x}\dif{y}\) and the 3D \(\dif{x}\dif{y}\dif{z}\)
and whenever this may be misunderstanding we use  \(\dif{x_1}\dif{x_2}\dif{x_3}\).
We use the notation \(\dif{s}\) for the surface element in
the integrals on the boundary  as well as any subpart of the  boundary \(\partial\Omega\).
In \eqref{wvfphi},  the subscripts denote the restriction to \(\Omega_\ell\), \(\ell=\) a, c, or \(\Omega_\mathrm{m}\).

\begin{remark}\label{rpsi} The truncation \(\psi\) is assumed,
which is consistent with the real behavior of the partial density of H\(_3\)O\(^+\) in the membrane.
Mathematically speaking, it avoids some extra regularity of the weak solutions and, consequently,
the even more restriction on the smallness conditions.
We  emphasize that the \(L^\infty \)-bound of solutions of elliptic equations is not straightforward true for elliptic systems \cite{staicu}.
\end{remark}

The set of hypothesis is as follows.
\begin{description}
\item[(H1)] The viscosities \(\mu\) and \(\lambda\) are  assumed to be Carath\'eodory functions from \(\Omega_\mathrm{f}\times\mathbb{R}\) into \(\mathbb{R}\),
\textit{ i.e.} measurable with respect to space variable and
  continuous with respect to other variable,  such that
\begin{align}
\label{mu}
\exists \mu_\# ,  \mu^\# >0: &\  \mu_\# \leq\ \mu(x,e) \leq \mu^\# ; \\
\exists \lambda^\#>0: &\  - \mu/n\leq\lambda(x,e) \leq  \lambda^\# ,
\label{nu3}
\end{align}
for a.e. \(x\in\Omega_\mathrm{f}\) and  for all \( e\in\mathbb{R}\). While \(K_g\) 
is assumed to be Carath\'eodory function from \(\Omega_\mathrm{p}\times\mathbb{R}\) into \(\mathbb{R}\) such that
\begin{equation}\label{defkl}
\exists K_l,K_l^\#>0 : \ K_l\leq K_g(x,e)\leq K_l^\#,
\end{equation}
for a.e. \(x\in\Omega_\mathrm{p}\) and  for all \( e\in\mathbb{R}\).

\item[(H2)]  
The leading coefficients \(D_i\) and \(k\)  are Carath\'eodory  functions from  \(\Omega\times\mathbb{R}\) to \(\mathbb{R}\)  and
  \(\sigma\)  is a   Carath\'eodory  function from  \(\Omega\times\mathbb{R}^{3}\) to \(\mathbb{R}\) such that
 \(\sigma(x,\cdot)\equiv \sigma_m\in C(\mathbb{R}^2)\)  for a.e \(x\in\Omega_\mathrm{m}\). 
Moreover, they satisfy 
\begin{align}
\exists  D_i^\#, D_{i,\#} >0:\ & D_{i,\#} \leq  D_i(x,e)\leq D_i^\#,\quad \mbox{ for a.e. }  x\in \Omega_\mathrm{m} \cup
\Omega_\mathrm{GDL}; \label{Dif}\\
\exists D_{i,m}^\#,D_{i,m} >0:\ &  D_{1,_m} \leq  D_1(x,e)\leq 
\left\{\begin{array}{ll}
D_{1,m}^\#|e|/T^\#& \mbox{if } |e|\leq T^\#\\
D_{1,m}^\# &\mbox{if } |e|>T^\#
\end{array}\right.\\
&  D_{2,_m} \leq  D_2(x,e)\leq D_{2,m}^\#, 
\quad \mbox{ for a.e. }  x\in \Omega_\mathrm{m} ; \label{Dip}\\
\exists k^\#, k_\# >0:\ & k_\#\leq k(x,e)\leq k^\#, \quad\mbox{ for a.e. } x\in\Omega; \label{km}\\
\exists\sigma^\#, \sigma_\#>0:\ &\sigma_\#\leq \sigma(x,\mathbf{e})\leq \sigma^\#, \quad \mbox{ for a.e. }  x\in \Omega_\mathrm{a}\cup  \Omega_\mathrm{c};\label{smp}\\ 
\exists\sigma_m^\#, \sigma_{m,\#}>0: \ &  \sigma_{m,\#}\leq \sigma_m(d,e) \leq  \sigma_m^\#, 
\label{smm}
\end{align}
 for all  \(d,e\in\mathbb{R}\) and \(\mathbf{e}\in\mathbb{R}^{3}\).  

\item[(H3)]  The cross-effect  Peltier, Seebeck, Soret, Dufour and binary diffusion 
 coefficients \(\Pi,\alpha_\mathrm{S},S_i\), \(D_i', D_{ij}\) (\(i,j=1, 2\) with \(j\not=i\)) are
  Carath\'eodory functions such that
   \begin{align}
\exists\Pi^\#>0:\ & |\Pi(x,e)|\leq \Pi^\# , \quad \mbox{ for a.e. }  x\in \Omega_\mathrm{p} ;\label{pmax}\\
\exists\alpha^\#>0:\ & |\alpha_\mathrm{S}(x,e)|\leq \alpha^\#, \quad \mbox{ for a.e. }  x\in \Omega_\mathrm{m} ;\label{amax}\\
\exists S_i^\#>0:\ & |dS_i(x,d,e)|\leq S_i^\#, \quad \mbox{ for a.e. }  x\in \Omega ;\label{dsmax}\\
\exists (D_i')^\#>0:\ & (R/M_i) e^2|D_i'(x,d,e)|\leq (D_i')^\#, \quad \mbox{ for a.e. }  x\in \Omega ;\label{edmax}\\
\exists  D_{ij}^\#>0:\ & |  D_{ij}(x,e)| \leq D_{ij}^\#,\quad \mbox{ for a.e. }  x\in \Omega , \label{Dij}
  \end{align}
 for all \(d,e\in \mathbb R\).
Moreover, we assume
\begin{align}\label{defai}
a_{1,\#}& := \frac{1-\epsilon_1-\epsilon_2 -\epsilon_3 }{2} D_{1,\#} - \frac{1}{\epsilon_6} \frac{((D'_1)^\#)^2}{k_\#} >0; \\
a_{1,m} & := \frac{1-\epsilon_1-\epsilon_3}{2}  D_{1,m} -\frac{1}{\epsilon_6} \frac{ ((D'_1)^\#)^2}{k_\#} 
 - \frac{1}{\epsilon_4} \frac{(D^\#_{21})^2}{D_{2,m} }  - \frac{F^2}{\epsilon_8 M_1^2}\frac{ (D_{1,m}^\#)^2}{\sigma_{m,\#}}  >0; \\
a_{2,\#}& := \frac{1-\epsilon_4-\epsilon_5}{2}D_{2,\#}  - \frac{1}{\epsilon_6} \frac{((D'_2)^\#)^2}{k_\#} >0; \\
a_{2,m}& := \frac{1-\epsilon_4}{2} D_{2,m} - \frac{1}{\epsilon_6}\frac{ ((D'_2)^\#)^2}{k_\#}  - \frac{1}{\epsilon_1}  \frac{(D^\#_{12})^2}{D_{1,m} } >0; \\
a_{3,\#}& := \frac{ 1-\epsilon_6-\epsilon_7}{2} k_\# -  \frac{1}{\epsilon_2}   \frac{(S_1^\#)^2}{D_{1,\#}}
 -  \frac{1}{\epsilon_5}   \frac{(S_2^\#)^2}{D_{2,\#}}  >0; \\
a_{3,m}& := \frac{1-\epsilon_7}{2} k_\# -  \frac{1}{\epsilon_2}  \frac{(S_1^\#)^2}{D_{1,m}} -  \frac{1}{\epsilon_5}   \frac{(S_2^\#)^2}{D_{2,m}} 
- \frac{1}{\epsilon_9 }  (\alpha ^\#)^2\sigma^\#  >0; \\
\label{defa4}
a_{4,m}& := \sigma_{m, \# }\left( 1- \frac{\epsilon_8+\epsilon_9}{2 } 
- \frac{1}{2\epsilon_7} \frac{(\Pi^\#)^2\sigma_m^\#}{ k_{\#}}\right) - \frac{1}{2\epsilon_3} \frac{ (\rho_{1,m}\kappa^\#)^2}{D_{1,m}} >0 .
\end{align}
for some \(\epsilon_1,\cdots,\epsilon_9>0\) being such that \(\epsilon_1+\epsilon_2+\epsilon_3<1\), \(\epsilon_4+\epsilon_5<1\),
\(\epsilon_6 +\epsilon_7<1\) and \(\epsilon_8 + \epsilon_9 <2\).
Here, \(\rho_{1,m}\)  stands for the upper bound given at (H8) and \(\kappa^\#:= F D_{1,m}^\#/(RT^\#)\). 

\item[(H4)] The boundary coefficient \(\beta\)  is assumed to be a Carath\'eodory function from \(\Gamma\times\mathbb{R}\) into \(\mathbb{R}\). 
 Moreover, there exist \(\beta_\#,\beta^\#>0\) such that 
\begin{equation}\label{gamm}
\beta_\#\leq \beta(\cdot,e) \leq \beta^\# ,
\end{equation}
 a.e. in \(\Gamma\), and  for all \(e\in\mathbb{R}\). 
 
\item[(H5)] The boundary coefficient \(h_c\)  is assumed to be a Carath\'eodory function from \(\Gamma_\mathrm{w}\times\mathbb{R}\) into \(\mathbb{R}\). 
 Moreover, there exist \(h_\#,h^\#>0\) such that 
\begin{equation}\label{hmm}
h_\#\leq h_c(\cdot,e) \leq h^\# ,
\end{equation}
 a.e. in \(\Gamma_\mathrm{w}\), and  for all \(e\in\mathbb{R}\).
 
\item[(H6)] The boundary functions \(j_\ell\), \(\ell=\) a, c, are assumed to be the increasing, odd continuous functions from \(\mathbb{R}\) into \(\mathbb{R}\), 
defined in \eqref{defeta}.

\item[(H7)] There exists \( \mathbf{u}_0\in \mathbf{H}^{1}(\Omega_\mathrm{f})\) such that \(\mathbf{u}_0=u_\mathrm{in}\mathbf{e}_y\) on \(\Gamma_\mathrm{in} \),
\(\mathbf{u}_0=\mathbf{u}_\mathrm{out}\) on \(\Gamma_\mathrm{out} \) and
\(\mathbf{u}_0=\mathbf{0}\) on \(\partial\Omega_\mathrm{f}\setminus\left(\Gamma_\mathrm{in} \cup \Gamma_\mathrm{out}\right)\). 

\item[(H8)] There exist \(\rho_{1,0}\) and \(\rho_{2,0}\) belong to \(C(\overline\Omega)\) such that  \(\rho_{i,0}=\rho_{i,\mathrm{in}}\)on \(\Gamma_\mathrm{in} \),
  \(\rho_{i,0}=\rho_{i,\mathrm{out}}\)on \(\Gamma_\mathrm{out} \)  and
\(\nabla \rho_{i,0}\cdot\mathbf{n}=0\)  on \(\partial\Omega\setminus\left(\Gamma_\mathrm{in} \cup \Gamma_\mathrm{out}\right)\),
 for \(i=1,2\). Moreover, the lower and  upper bounds \(0\leq \rho_{1,0}\leq \rho_{1,m}\) occur a.e. in \(\Omega_\mathrm{m}\).

\item[(H9)] There exists \(\theta_0 \in H^1(\Omega)\) such that \(\theta_0= \theta_{\mathrm{in}}\)on \(\Gamma_\mathrm{in} \) and
 \(\theta_0= \theta_{\mathrm{out}}\)on \(\Gamma_\mathrm{out} \).
\end{description}
\begin{remark}
The nonstandard assumptions \eqref{defai}-\eqref{defa4} are required for the Legendre--Hadamard ellipticity condition.
\end{remark}

Using the fixed point argument, we establish the following 2D result under the smallness on the data.
\begin{theorem}\label{tmain}
Let \(\Omega\) be a bounded multiregion domain   of \(\mathbb{R}^n\), \(n=2\).
Under the assumptions (H1)-(H9),
the fuel cell problem admits, at least, one solution according to Definition \ref{dwt} such that
\begin{itemize}
\item the velocity \(\mathbf{u}\in \mathbf{u}_0+\mathbf{V}(\Omega_f)\); 
\item the  pressure \(p\in H(\Omega_p)\);
\item the partial densities \( \bm{\rho} \in  \bm{\rho} _0 + [V(\Omega)]^2\);
\item the temperature \(\theta\in  \theta_0 + V(\Omega)\);
\item the potential \(\phi \in  E_{cell}\chi_{\Omega_\mathrm{c}}+  V_r(\Omega_p) \), for \(r>2\),
\end{itemize}
if provided by  the smallness condition
\begin{equation}
root_2 >  \sqrt{\frac{2C_K}{\mu_\#} } C_0 
 +\sqrt{C_KL}   \frac{R_M}{\mu_\#}
\frac{1}{a_{\#} } \left(h^\#\|\theta_e\|_{2,\Gamma_\mathrm{w}}^2 +2\mathcal{B}_0\right) ,\label{small}
\end{equation}
where \(root_2\) is the positive root of the quadratic polynomial \eqref{2polynomial}, \(C_K >1\) is the Korn constant in \eqref{korn}
and  \(C_0, \mathcal{B}_0, a_\#>0\) are constants defined in \eqref{C0},  \eqref{defb0} and \eqref{defaa}, respectively.
\end{theorem}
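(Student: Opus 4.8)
\emph{Step 1: the fixed point operator.} The plan is to prove Theorem \ref{tmain} by the Tychonoff fixed point theorem, decoupling the fuel cell problem into the two auxiliary problems studied in Section \ref{auxtdt} and closing the a priori bounds with the quantitative Poincar\'e, Sobolev, Korn and trilinear estimates of Section \ref{tdt}. After subtracting the background profiles $\mathbf{u}_0,\bm{\rho}_0,\theta_0$ and the lift $E_\mathrm{cell}\chi_{\Omega_\mathrm{c}}$ from (H7)--(H9), I would work on a nonempty closed bounded convex set $\mathcal{K}\subset [V(\Omega)]^2\times V(\Omega)\times V_r(\Omega_\mathrm{p})\times\mathbf{V}(\Omega_f)$, $r>2$ fixed, equipped with the (relatively compact) $L^2$-type topology. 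To $(\hat{\bm{\rho}},\hat\theta,\hat\phi,\hat{\mathbf{u}})\in\mathcal{K}$ I associate first the solution $(\bm{\rho},\theta,\phi)$ of the thermoelectrochemical auxiliary problem \eqref{wvf1}--\eqref{wvfphi}, in which $\hat\theta,\hat{\bm{\rho}}$ are frozen in the nonlinear coefficients, $\hat{\mathbf{u}}$ is frozen in the convective terms of \eqref{wvf1}--\eqref{wvf2}, $\psi(\hat\rho_1)$ replaces $\psi(\rho_1)$, and $\sigma(\hat{\bm{\rho}},\hat\theta)|\nabla\hat\phi|^2$ is the given Joule source in \eqref{wvfi1}; and then the solution $(\mathbf{u},p)$ of the Beavers--Joseph--Saffman/Stokes--Darcy auxiliary problem \eqref{motionwbj}, with $\rho=\rho_1+\rho_2$ and $\theta$ just computed inserted in the right-hand side and in $\mu,\lambda,\beta,K_g$ (the pressure dependence of $K_g$ being absorbed by a standard inner sub-iteration or by the Leray--Lions structure). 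Set $\mathcal{T}(\hat{\bm{\rho}},\hat\theta,\hat\phi,\hat{\mathbf{u}})=(\bm{\rho},\theta,\phi,\mathbf{u})$; a fixed point, together with the associated $p$, is a weak solution in the sense of Definition \ref{dwt}.

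\emph{Step 2: well-posedness of the auxiliary problems.} With the coefficients frozen, \eqref{wvf1}--\eqref{wvfphi} is linear up to the monotone increasing Butler--Volmer boundary terms of (H6). Testing componentwise and absorbing the Dufour/Soret, Peltier/Seebeck, binary-diffusion and ionic-conductivity cross terms by Young's inequality with the weights $\epsilon_1,\dots,\epsilon_9$, the principal part is coercive with lower constant $a_\#:=\min\{a_{1,\#},a_{1,m},a_{2,\#},a_{2,m},a_{3,\#},a_{3,m},a_{4,m}\}>0$ (which is exactly why \eqref{defai}--\eqref{defa4} are imposed; cf. \eqref{defaa}), and existence follows from monotone operator theory (Browder--Minty). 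The regularity $\phi\in V_r(\Omega_\mathrm{p})$, $r>2$, follows from the quantitative elliptic (Sneiberg-type) estimate recalled in Section \ref{int}; it guarantees $\sigma|\nabla\phi|^2\in L^{r/2}$ with $r/2>1$, which, together with $n=2$, makes it an admissible right-hand side for \eqref{wvfi1}. For the Stokes--Darcy problem, the interface pairing $\int_\Gamma p\,\mathbf{v}\cdot\mathbf{n}-\int_\Gamma\mathbf{u}\cdot\mathbf{n}\,v$ is skew-symmetric and cancels in the energy identity; together with Korn's inequality \eqref{korn} (constant $C_K$), the Beavers--Joseph--Saffman term $\int_\Gamma\beta|\mathbf{u}_T|^2$ and the Darcy term $\int_{\Omega_\mathrm{p}}(K_g/\mu)|\nabla p|^2$, the bilinear form is coercive on $\mathbf{V}(\Omega_f)\times H(\Omega_p)$, with coercivity constant bounded below by $\mu_\#/(2C_K)$ for $\mathbf{u}$ and $K_l/\mu^\#$ for $p$, so Lax--Milgram applies.

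\emph{Step 3: a priori bounds, the self-map property, and the smallness condition.} Testing the thermoelectrochemical problem with $(\bm{\rho}-\bm{\rho}_0,\theta-\theta_0,\phi-E_\mathrm{cell}\chi_{\Omega_\mathrm{c}})$ and using Step 2, the right-hand side is controlled by: (i) the Newton-cooling data $h^\#\|\theta_e\|_{2,\Gamma_\mathrm{w}}^2$; (ii) the Joule term, estimated through the $W^{1,r}$-bound on $\phi$ and the quantitative embedding $V(\Omega)\hookrightarrow L^{r'}(\Omega)$, valid for every finite $r'$ when $n=2$; (iii) the convective terms $\int_{\Omega_\mathrm{f}}\rho_i\hat{\mathbf{u}}\cdot\nabla v$, bounded by the quantitative trilinear estimate of Section \ref{tdt}. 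This yields $\|(\bm{\rho}-\bm{\rho}_0,\theta-\theta_0,\phi-E_\mathrm{cell}\chi_{\Omega_\mathrm{c}})\|\le a_\#^{-1}(h^\#\|\theta_e\|_{2,\Gamma_\mathrm{w}}^2+2\mathcal{B}_0)+(\text{multiple of }\|\hat{\mathbf{u}}\|)$, with $\mathcal{B}_0$ collecting the remaining data as in \eqref{defb0}. Inserting $\rho,\theta$ into \eqref{motionwbj} and testing with $(\mathbf{u}-\mathbf{u}_0,p)$, the forcing $R_M\int_{\Omega_\mathrm{f}}\rho\theta\,\nabla\cdot(\mathbf{u}-\mathbf{u}_0)$ is bounded using $V(\Omega)\hookrightarrow L^4(\Omega)$ (again $n=2$), so $\rho\theta\in L^2$ with an explicit Sobolev constant, while the quantitative Poincar\'e inequality on $\Omega_\mathrm{f}$ contributes the factor $\sqrt{C_KL}$. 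Collecting everything, the velocity-ball radius must satisfy an inequality whose associated quadratic polynomial is \eqref{2polynomial}; its positive root $root_2$ is the largest admissible radius, and for such a radius the remaining purely data-dependent contribution $\sqrt{2C_K/\mu_\#}\,C_0+\sqrt{C_KL}\,(R_M/\mu_\#)\,a_\#^{-1}(h^\#\|\theta_e\|_{2,\Gamma_\mathrm{w}}^2+2\mathcal{B}_0)$ must stay below it; this is precisely \eqref{small}, under which $\mathcal{T}(\mathcal{K})\subset\mathcal{K}$ for a suitable $\mathcal{K}$.

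\emph{Step 4: continuity, compactness, conclusion, and the main obstacle.} By Step 3 the image $\mathcal{T}(\mathcal{K})$ is bounded in $[H^1(\Omega)]^2\times H^1(\Omega)\times W^{1,r}(\Omega_\mathrm{p})\times\mathbf{H}^1(\Omega_f)$, hence precompact in the $L^2$-type topology of $\mathcal{K}$ by Rellich--Kondrachov, so $\mathcal{T}$ is compact. For continuity, along a converging sequence in $\mathcal{K}$ one extracts weak limits of the solutions from the uniform bounds, passes the coefficients to the limit using the Carath\'eodory hypotheses (H1)--(H5) with a.e.\ convergence and dominated convergence, handles the Butler--Volmer boundary terms by Minty's monotonicity trick, and passes the convective, Joule and buoyancy terms to the limit using the strong $L^2$-convergence of the data and the compact embeddings; uniqueness of the linear auxiliary problems (or a subsequence argument) identifies the limit with $\mathcal{T}$ of the limit. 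Tychonoff's fixed point theorem then provides a fixed point $(\bm{\rho},\theta,\phi,\mathbf{u})$ which, with the associated $p$, has the regularity claimed in Theorem \ref{tmain}. The main obstacle is Step 3: the Joule source $\sigma|\nabla\phi|^2$ forces the $W^{1,r}$-regularity of $\phi$ with $r>2$ and a genuinely \emph{quantitative} elliptic (Sneiberg-type) estimate, while the temperature-to-viscosity-to-velocity-to-convection-to-temperature feedback couples the two subsystems, so that closing the a priori bounds with explicit constants — and thereby producing the precise polynomial \eqref{2polynomial} — requires careful bookkeeping of every Poincar\'e, Sobolev, Korn and trilinear constant; the smallness condition \eqref{small} is exactly the solvability threshold of that closure.
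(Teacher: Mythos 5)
Your overall strategy (Tychonoff fixed point, splitting into the Stokes--Darcy and thermoelectrochemical auxiliary problems, and closing the a priori bounds with the quantitative Korn/Poincar\'e/Sobolev/trilinear constants so that the threshold \eqref{small} emerges from the quadratic polynomial \eqref{2polynomial}) is the same as the paper's, but your choice of fixed-point variables creates a genuine gap. You iterate on $(\hat{\bm{\rho}},\hat\theta,\hat\phi,\hat{\mathbf{u}})$ and freeze the Joule source as $\sigma(\hat{\bm{\rho}},\hat\theta)|\nabla\hat\phi|^2$, with $\mathcal{K}$ carrying an $L^2$-type topology and compactness coming from Rellich--Kondrachov. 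That compactness controls the \emph{functions}, not their gradients: for an arbitrary sequence $\hat\phi_m$ in a bounded ball of $V_r(\Omega_\mathrm{p})$ converging strongly in $L^2$, one only gets $\nabla\hat\phi_m\rightharpoonup\nabla\hat\phi$ weakly, and $|\nabla\hat\phi_m|^2$ need not converge to $|\nabla\hat\phi|^2$ in any topology compatible with the energy equation (oscillating gradients are not excluded). So the continuity of your map $\mathcal{T}$ in the $\hat\phi$-slot, claimed in your Step 4 via ``strong $L^2$-convergence of the data and compact embeddings,'' cannot be established. The paper avoids precisely this by taking as fixed-point coordinate the Joule term itself, $\Phi\in L^t(\Omega_\mathrm{a}\cup\Omega_\mathrm{c})$ with the \emph{weak} topology (see \eqref{fpa}), and by proving in Proposition \ref{regular} -- via the Gr\"oger $W^{1,r}$-estimate \eqref{cotajoule} -- that for \emph{solutions} of the auxiliary potential equation \eqref{aux-phicc} one has strong convergence $\sigma(\bm{\varrho}_m,\xi_m)|\nabla\phi_m|^2\to\sigma(\bm{\varrho},\xi)|\nabla\phi|^2$ in $L^{r/2}$; that structural information about solutions is exactly what your scheme lacks, because in your set-up $\hat\phi$ is an arbitrary element of the ball.

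Two secondary points. First, you do not include the pressure among the frozen variables, so the Darcy coefficient $K_g(p)$ remains a nonlinearity of the Stokes--Darcy subproblem; your Step 2 then invokes Lax--Milgram, which requires linearity, and the parenthetical ``inner sub-iteration or Leray--Lions structure'' is not carried out (uniqueness, needed for a well-defined single-valued $\mathcal{T}$, is also unclear in the Leray--Lions route). The paper simply adds $\pi$ to the fixed-point variables (see \eqref{wvfup} and Proposition \ref{proppxi}), so that Lax--Milgram genuinely applies. Second, you solve the subproblems in the reverse order (TEC first with frozen $\hat{\mathbf{u}}$, then Stokes--Darcy), whereas the paper computes the velocity from $(\pi,\bm{\varrho},\xi)$ and feeds it into the TEC system subject to the constraint \eqref{defw}; your ordering could in principle be made to work, but the bookkeeping that produces \eqref{R2}--\eqref{2polynomial} and hence the explicit condition \eqref{small} is tied to the paper's ordering, and your Step 3 asserts rather than derives that the same polynomial appears. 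The essential repair, however, is the treatment of the Joule term: either iterate on $\Phi=|\nabla\phi|^2$ with the weak $L^t$ topology as the paper does, or prove strong gradient convergence for the potential component of solution outputs before claiming continuity of $\mathcal{T}$.
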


In the sequel, we focus on the H\(_2\)PEMFC.
Under the operating parameters \cite{gosh,li2011},  the following data are known.
\begin{enumerate}
\item
The viscosities are known  decreasing functions on temperature,   for instance   in the operating temperature range
\( \SIrange{320}{390}{\kelvin}\),
\(\mu_\#\approx \SI{4.2e-5}{\pascal\second}\)  and
\(\mu^\#\approx \SI{4.8e-5}{\pascal\second}\)  for the  air. 
The water viscosity \(\mu_{water} \approx 100 \mu_{air}\).
\item For values of \(K= \SI{1.76E-11}{\square\metre} \) we have \(\beta \) of order \(10^5\).
We may assume \(\beta_\#=1\) in  (H4).

\item The thermal conductivity for H\(_2\) varies from \(k_{\mathrm{H}_2} (\SI{300}{\kelvin}) = \SI{0.18}{\watt\per\metre\per\kelvin}\) 
to  \(k_{\mathrm{H}_2} (\SI{400}{\kelvin}) = \SI{0.2}{\watt\per\metre\per\kelvin}\).
Typical values of the  thermal conductivity  \(k \approx 0.03\),  0.023  and \SI{0.67}{\watt\per\metre\per\kelvin} 
are known for air, for water vapor and for liquid water, respectively.
In \(\Omega_\mathrm{p}\), the thermal conductivity varies in the range \(0.2-0.5\) \si{\watt\per\metre\per\kelvin}.
Considering the electrical conductivity \(\sigma^\# = \SI{120}{\siemens\per\metre}\)  and Peltier coefficients with its maximum of
\(\Pi^\#=0.3\) \cite{suhpark}, then the smallness condition \eqref{defa4} is by validated  by \eqref{kelvin}.
 For the anode, we have 
\(\alpha^\# =0.3/320 << (k_\# /\sigma^\#)^{1/2} \approx 0.04\).

\item For  air/O\(_2\), the inlet velocity is  \(u_\mathrm{in}= \SI{0.2}{\metre\per\second}\) then \(1.9\lesssim \sqrt{\beta^\#}u_\mathrm{in}|\Gamma|^{1/2}\lesssim   6.3 \),
with \(0.03\leq \sqrt{L}\leq 10^{-1} < 2/(1+2\sqrt{2})\approx 0.5\).

\item The inlet concentration of hydrogen is known as around \SI{54}{\mol\per \cubic\metre}.
Recalling \eqref{defrhoi}, we know \(\rho_{1,\mathrm{in}}(\mathrm{H}_2) = \SI{0.1}{\kilogram\per \cubic\metre} >  \rho_{1,m}\).

\item Typical values for diffusion coefficients are \(D_{\mathrm{O}_2} (\theta)=  1.77\times 10^{-4}(\theta/273)^{1.8}\)
 and \(D_{\mathrm{H}_2\mathrm{O}} (\theta)= 2.56\times 10^{-5}(\theta/307)^{2.3}\),
while for hydrogen in water vapor \(D_1\approx \SI{9.15E-5}{\square\metre\per\second}\).
In the membrane \(\Omega_\mathrm{m}\),
 the binary diffusion coefficient \(D_{21} =\SI{1.1E-8}{\square\metre\per\second}\),
while typical values of diffusion coefficients are \(D_{\mathrm{O}_2} (\theta)=  2.88\times 10^{-10}\exp\left[2933(1/313-1/\theta)\right]\),
\(D_{\mathrm{O}_2} (\theta)=  4.1\times 10^{-7}\exp\left[ -2602/\theta \right]\) and \(D_2=0\) \cite{li2011}.

\item Heat transfer coefficients are \(h_{\mathrm{H}_2\mathrm{O}} = \SI{2672}{\watt\per\square\metre\per\kelvin}\),
 \(h_{\mathrm{H}_2} = \SI{824}{\watt\per\square\metre\per\kelvin}\)  and
 \(h_{air} = \SI{1200}{\watt\per\square\metre\per\kelvin}\) \cite{ramousse}.

\end{enumerate}

The method applied in determining explicit constants, namely in Proposition \ref{proppoincare},
does not work in 3D unless the additional assumption of the functions vanish at least on the solid wall basis (\(z=0\)).
However, neither the fluid velocity field nor the partial densities verify the Dirichlet condition on real situations.
The fluid velocity field is only known impenetrable on the solid boundary
while the partial densities exist satisfying  \eqref{noflow}.

\section{Strategy}
\label{strat}
 
Set the \((\bm{\rho},\theta)\)-dependent  \(4\times 4\)-matrix
\[  
\mathsf{A}(\bm{\rho},\theta) =\left[\begin{array}{cccc}
D_1(\theta) & D_{ 12}(\rho_2,\theta)  & \rho_1 S_{1}(\rho_1,\theta) &\rho_1\kappa(\theta)/(R\theta)\\
D_{21}(\rho_1,\theta)  &D_2(\theta) &\rho_ 2 S_2(\rho_2,\theta) & 0 \\
R\theta^2 D'_{1}(\rho_1,\theta) /M_1  & R\theta^2 D_{2}(\rho_2,\theta)/M_2 & k(\theta)  & \Pi(\theta) \sigma_m(\theta) \\
\kappa(\theta)/M_1 & 0  & \alpha_S(\theta) \sigma_m(\theta) & \sigma(\bm{\rho},\theta)
\end{array}\right] .
\]

The existence of the weak solution to the fuel cell problem relies on the fixed point argument
\begin{align}\label{fpa}
\mathcal{T}: (\pi,  \bm{\upsilon} , \Phi) & \in E:=
(H(\Omega_p)/\mathbb{R}) \times [V(\Omega)]^{3} \times L^t(\Omega_\mathrm{a}\cup\Omega_\mathrm{c})\nonumber\\
&\mapsto    (\mathbf{U},p) \in \mathbf{V}(\Omega_f)\times ( H(\Omega_p)/\mathbb{R}) \nonumber \\
& \mapsto (\bm{\Upsilon}, \Theta,\phi_{cc}) \in   [V(\Omega)]^{3} \times V_r(\Omega_p) \nonumber\\
&\mapsto (p, \bm{\Upsilon},\Theta, |\nabla\phi|_{\Omega_\mathrm{a}\cup\Omega_\mathrm{c}}|^2)
\end{align}
where
\begin{itemize}
\item \((\mathbf{U},p) =(\mathbf{u} ,p) (\pi, \bm{\varrho},\xi)  \) stands for the auxiliary velocity-pressure pair solving the 
homogeneous Dirichlet--BJS/Stokes--Darcy problem
\begin{align} 
\int_{\Omega_\mathrm{f}}\mu(\xi)D\mathbf{U}:D\mathbf{v}  \dif{x}+
\int_{\Omega_\mathrm{f}}\lambda(\xi)\nabla\cdot\mathbf{U}\nabla\cdot\mathbf{v}  \dif{x}\nonumber \\
+\int_{\Gamma} \beta(\xi)\mathbf{U}_T\cdot\mathbf{v}_T \dif{s}+
 \int_{\Omega_\mathrm{p}}\frac{K_g(\pi)}{\mu (\xi)}\nabla p\cdot\nabla v\dif{x}
+ \int_\Gamma p\mathbf{v}\cdot \mathbf{n}\dif{s} -
 \int_{\Gamma} \mathbf{U}\cdot\mathbf{n} v\dif{s}\nonumber \\
= R_M \int_{\Omega_\mathrm{f}} \varrho\xi \nabla\cdot\mathbf{v} \dif{x}
-G(\xi, \mathbf{u}_0, \mathbf{v} ,v)
,\ \forall (\mathbf{v},v) \in \mathbf{V}(\Omega_f)\times H(\Omega_p),\label{wvfup}
\end{align}
where
\begin{align}\label{defvarrho}
\varrho = \varrho_1+ \varrho_2 & \quad (\varrho_1 = \upsilon_1+\rho_{1,0} \mbox{ and }  \varrho_2 = \upsilon_2+\rho_{2,0} )\mbox{ and }  \xi =\upsilon_3+\theta_{0} ; \\
G(\xi, \mathbf{z}, \mathbf{v} ,v)& = 
\int_{\Omega_\mathrm{f}}\mu(\xi)D\mathbf{z}:D\mathbf{v}  \dif{x}
+\int_{\Omega_\mathrm{f}}\lambda(\xi)\nabla\cdot\mathbf{z}\nabla\cdot\mathbf{v}  \dif{x}.  \nonumber 
\end{align}
We define \(\mathbf{u}=\mathbf{U}+ \mathbf{u}_0\).

\item \( (\Upsilon_1,\Upsilon_2, \Theta, \phi_{cc}) = (\bm{\rho}, \theta, \phi  )(\mathbf{w}, \bm{\varrho},\xi,\Phi) \) 
stands for the auxiliary partial densities, temperature and potential solving the coupled problem
\begin{align}
\int_{\Omega_\mathrm{f}}  \Upsilon_1 \mathbf{w}\cdot\nabla  v\dif{x} + 
\int_{\Omega} D_1( \xi )\nabla \Upsilon_1 \cdot\nabla v\dif{x}  
+ \int_{\Omega} D_{12}( \varrho_2,\xi )\nabla \rho_2 \cdot\nabla v\dif{x} \nonumber \\
+ 
\int_{\Omega} \varrho_1 S_{1 }( \varrho_1,\xi )\nabla \theta\cdot\nabla v \dif{x}  + \frac{1}{R}
\int_{\Omega_\mathrm{m}} \psi(\varrho_1) \frac{\kappa (\xi )}{\xi} \nabla\phi \cdot\nabla v\dif{x} 
 = - g_1(\mathbf{w},  \xi, \rho_{1,0} ,v); \label{aux-pdi1} 
\end{align}
\begin{align}
\int_{\Omega_\mathrm{f}}  \Upsilon_2 \mathbf{w}\cdot\nabla  v\dif{x} + 
\int_{\Omega} D_2( \xi )\nabla \Upsilon_2 \cdot\nabla v\dif{x}  
+ \int_{\Omega} D_{21}( \varrho_1,\xi )\nabla \rho_1 \cdot\nabla v\dif{x} \nonumber \\
+ 
\int_{\Omega} \varrho_2 S_{2}( \varrho_2,\xi )\nabla \theta\cdot\nabla v \dif{x}  
 = - g_2(\mathbf{w},  \xi, \rho_{2,0} ,v) ; \label{aux-pdi2} 
\end{align}
\begin{align} 
\int_{\Omega}  k(\xi)\nabla\Theta \cdot\nabla v\dif{x} + \int_{\Gamma_\mathrm{w}} h_c(\xi)\Theta v \dif{s}  \nonumber \\
+\sum_{j=1}^{2 } \frac{R}{M_j}
\int_{\Omega} \xi^2 D'_{j}( \varrho_j,\xi)\nabla \rho_j \cdot\nabla v\dif{x} +
\int_{\Omega_\mathrm{m}} \Pi(\xi) \sigma_m(\varrho_2,  \xi)\nabla\phi\cdot\nabla v\dif{x} \nonumber \\
 = \int_{\Gamma_\mathrm{w}}  h_c(\xi)\theta_e  v \dif{s} 
+\int_{\Omega_\mathrm{a}\cup\Omega_\mathrm{c}} \sigma( \bm{\varrho},\xi) \Phi v\dif{x} -g_3(\xi, \theta_0,v) ;\label{aux-tt} 
\end{align}
\begin{align} 
\int_{\Omega_\mathrm{p}}\sigma(\bm{\varrho},\xi) \nabla\phi\cdot\nabla w\dif{x}
 +  \frac{1}{M_1} \int_{\Omega_\mathrm{m}}   \kappa(\xi)\nabla\rho_1 \cdot \nabla w \dif{x} \nonumber\\
+ \int_{\Omega_\mathrm{m}}  \alpha_\mathrm{S}(\xi)  \sigma_m(\varrho_2, \xi) \nabla\theta \cdot \nabla w \dif{x}\nonumber\\
+  \int_{\Gamma_\mathrm{a}} j_{a} (\phi_{cc,a}-\phi_{cc,m}) (w_a-w_m)\dif{s}
+ \int_{\Gamma_\mathrm{c}} j_{c} (\phi_{cc,c}-\phi_{cc,m}) (w_c-w_m)\dif{s}=0,  \label{aux-phicc}\end{align}
 for all  \(v\in V(\Omega)\) and \( w\in V(\Omega_p)\),
with  \(\mathbf{w}=\mathbf{u}(\pi, \bm{\varrho},\xi)\) being  the auxiliary velocity field given at Proposition  \ref{proppxi} and
\begin{align*}
g_i(\mathbf{w}, \xi, z,v) &=\int_{\Omega_\mathrm{f}}  z \mathbf{w}\cdot\nabla  v\dif{x} + 
\int_{\Omega} D_i( \xi )\nabla z \cdot\nabla v\dif{x} ; \\
g_3(\xi,z,v) &= \int_{\Omega}  k(\xi)\nabla z \cdot\nabla v\dif{x} + \int_{\Gamma_\mathrm{w}} h_c(\xi) z v \dif{s}, 
\end{align*}
for \(i=1,2\).
Here,  \(\bm{\rho}=\bm{\Upsilon}+ \bm{\rho}_0\), \(\theta=\Theta+\theta_0\)  and
 \(\phi=\phi_{cc} + E_{cell}\chi_{\Omega_\mathrm{c}} \), 
with \(\chi_{\Omega_\mathrm{c}}\) denoting the characteristic function.
\end{itemize}
The proofs of existence of a unique solution to each one of these systems involves the use of Lax--Milgram and Browder--Minty Theorems, respectively.

\begin{remark}
In the presence of the unbounded function \(\rho_1\in H^1(\Omega)\) in \eqref{aux-pdi1}, we freeze 
the partial density \(\rho_1\) as \(\varrho_1\) and we also take its real behavior by the truncation \(\psi\) (see Remark \ref{rpsi}),
in contrast with the abstract argument used in the work \cite{pemfc}.
\end{remark}

\section{Auxiliary results}
\label{tdt}

 Throughout this section, the space dimension \(n\) is kept general as possible.
To precise the quantitative estimates either the restriction 3D is required in Proposition \ref{proppoincare}
or the restriction of \(n=2\) is  required in Proposition \ref{propn2} and Lemmata \ref{lemae} and \ref{lemaw}.

First, we recall  the Korn  inequality and the Poincar\'e-type inequality known as Deny--Lions lemma
\begin{align}
\|\nabla\mathbf{v}\|_{2,\Omega}^2\leq C_K\|D\mathbf{v}\,\|_{2,\Omega}^2, & \quad\forall \mathbf{v}\in \mathbf{H}^{1}(\Omega); \label{korn}\\
\inf_{\alpha\in \mathbb{R}} \|v-\alpha\|_{2,\Omega_\mathrm{p}} \leq C_{\Omega_\mathrm{p}} \|\nabla v\|_{2,\Omega_\mathrm{p}},&
 \quad \forall v\in H^1(\Omega_\mathrm{p}) .\label{deny}
\end{align}
for some constant  \(C_K>1\),  and  some constant \(C_{\Omega_\mathrm{p}} \) only dependent on the domain \(\Omega_\mathrm{p}\).

Indeed, the \(q\)-Poincar\'e  inequality can have different forms (\(q>1\)),  in particular 
\begin{align}
\|v\|_{q,\Omega} \leq C_{\Omega}\left( \|\nabla v\|_{q,\Omega} +\left| \int_D v\dif{s}\right|\right),  \quad \forall v\in W^{1,q}(\Omega),\label{qpoincare}\\
\|v- (v)_D\|_{q,\Omega} \leq C_{\Omega}  \|\nabla v\|_{q,\Omega}, \quad \forall v\in W^{1,q}(\Omega),\label{bpoincare}
\end{align}
whenever \(\Omega\) is a bounded Lipschitz domain and \(D\) is measurable subset of \(\partial\Omega\) with positive (\(n-1\))-dimensional Lebesgue measure.
The Poincar\'e constant  \(C_{\Omega}\)  in the generalized Friedrichs inequality \eqref{qpoincare}  is not explicitly determined because the proof relies on the
contradiction argument. These abstract constants  are not useful for establishing quantitative estimates.
The Poincar\'e constant  \(C_{\Omega}\)  in \eqref{bpoincare} is known sharp equal to \(C_\Omega= \lambda_1^{-1/2}\), in the quadratic case (\(q=2\)),
where \(\lambda_1\) is the smallest positive eigenvalue of the  mixed Steklov problem.

We refer to \cite{zheng}, some quantitative estimates for the  Friedrichs-type inequalities
\[
\|v\|_{2,\Omega} \leq |\Omega|^{-1/2}\left(3d_\Omega ^{1+n/2} \|\nabla v\|_{2,\Omega} +\left| \int_\Omega v\dif{x}\right|\right),\quad \forall v\in H^{1}(\Omega),
\]
 where \(\Omega\) stands for a bounded convex domain with diameter \(d_\Omega\), 
and for Poincar\'e-type inequalities in \(W^{1,p}_\omega(\Omega)\), where \(1<p<\infty\),
\(\omega \subset\Omega\subset \mathbb{R}^n\) (\(n=2,3\)) has positive measure and \(\Omega\) is a bounded domain
such that is star-shaped with respect to \(\omega\).

To precise the Poincar\'e constants \(C_\Omega\), we state the following proposition  in which the constants are explicitly established in accordance with a 3D domain.
\begin{proposition}\label{proppoincare}
Let \(\Omega_\mathrm{f}\) and  \(\Omega_\mathrm{p}\) be the fluid bidomain and the porous domain, respectively. 
If \(r>1\), then
\begin{align}\label{poincare2}
\| v\|_{2,\Omega_\mathrm{f}} &\leq  \frac{L}{\sqrt{2}}  \|\nabla v \|_{2,\Omega_\mathrm{f}},\quad\forall v\in V(\Omega_f);\\
\|v\|_{2,\Omega_\mathrm{a}\cup\Omega_\mathrm{c}}&\leq  \left(
2\|v\|_{2,\Gamma_\mathrm{w}}^2 + L^2\| \nabla v\|_{2,\Omega_\mathrm{a}\cup\Omega_\mathrm{c}}^2\right)^{1/2}
 ,\quad \forall v\in H^1 (\Omega_\mathrm{a}\cup\Omega_\mathrm{c}); \label{poincareva}\\
\label{poincarer}
\|v\|_{r,\Omega_\mathrm{p}} &\leq  \frac{l_a+l_m+l_c}{r^{1/r}}  \|\nabla v\|_{r,\Omega_\mathrm{p}},\quad\forall v\in V_r(\Omega_p).
\end{align}
Moreover, the following inequalities
\begin{align}\label{Gammai}
\int_{\Gamma_i} |v|^2\dif{s} \leq l_i \int_{\Omega_i}|\nabla v|^2 \dif{x} ; \\
\label{Gammai1}
\int_{\Gamma_i} |v|\dif{s} \leq  |\Omega_i|^{1/2} \|\nabla v\|_{2,\Omega_i}
\end{align}
hold for \(i=\mathrm{a}, \mathrm{c}\).
\end{proposition}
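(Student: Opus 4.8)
The plan is to exploit the explicit product-type geometry of the channels and layers visible in Figure~\ref{fpem}: each subdomain is (up to the $x$-direction thickness) a slab of length $L$ in the $y$-direction, and the Dirichlet data sit on $\Gamma_\mathrm{in}\cup\Gamma_\mathrm{out}$, i.e.\ on the faces $y=0$ and $y=L$. For \eqref{poincare2}, I would argue slicewise: fix $x$ (and $z$ in 3D) and use the one-dimensional Poincar\'e inequality on the segment $y\in(0,L)$ for a function vanishing at $y=0$; the sharp constant for $\int_0^L|f|^2\le C\int_0^L|f'|^2$ with $f(0)=0$ is $C=(2L/\pi)^2\le L^2/2$, which already yields $\|v\|_{2,\Omega_\mathrm f}\le (L/\sqrt2)\|\partial_y v\|_{2,\Omega_\mathrm f}\le(L/\sqrt2)\|\nabla v\|_{2,\Omega_\mathrm f}$ after integrating the slice estimate in the remaining variables. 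The same slicewise idea with the one-dimensional estimate $\int_0^L|f|^r\le \tfrac{L^r}{r}\int_0^L|f'|^r$ for $f(0)=0$ (obtained from $f(y)=\int_0^y f'$, H\"older, and a Fubini/Hardy computation) gives \eqref{poincarer}, since functions in $V_r(\Omega_p)$ vanish on $\Gamma_\mathrm{cc}\cup(\Gamma\cap\partial\Omega_\mathrm a)$, a face transverse to the $y$-slices through the relevant components; the length of a $y$-slice through $\Omega_\mathrm p$ is controlled by $l_a+l_m+l_c$.

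For \eqref{poincareva} the boundary condition on $\Gamma_\mathrm{in}\cup\Gamma_\mathrm{out}$ is not available (these functions are only in $H^1(\Omega_\mathrm a\cup\Omega_\mathrm c)$), so instead I would anchor on the wall boundary $\Gamma_\mathrm w$, which contains a face of each of $\Omega_\mathrm a$ and $\Omega_\mathrm c$ (the current-collector side). Writing, for a $y$-slice, $v(y)=v(0)+\int_0^y\partial_y v$ with $v(0)$ the trace on $\Gamma_\mathrm w$, squaring, using $(a+b)^2\le 2a^2+2b^2$ and the one-dimensional Cauchy--Schwarz bound $\big(\int_0^y\partial_y v\big)^2\le L\int_0^L|\partial_y v|^2$, then integrating over the slice and over the transverse variables, yields $\|v\|_{2,\Omega_\mathrm a\cup\Omega_\mathrm c}^2\le 2\|v\|_{2,\Gamma_\mathrm w}^2+L^2\|\nabla v\|_{2,\Omega_\mathrm a\cup\Omega_\mathrm c}^2$; here I must be slightly careful about which face of $\Gamma_\mathrm w$ is ``$y=0$'' and absorb the slice-length factor $\le L$ into the constants as stated.

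The trace estimates \eqref{Gammai}--\eqref{Gammai1} follow the same philosophy applied to a direction transverse to $\Gamma_i$: since $\Gamma_i$ ($i=\mathrm a,\mathrm c$) is a face of $\Omega_i$ of thickness $l_i$ in the $x$-direction, and functions in the relevant trace space vanish on the opposite face ($\Gamma\cap\partial\Omega_i$, where $v=0$ by the definition of $V(\Omega_p)$/$V_r(\Omega_p)$), one writes $v$ restricted to an $x$-line as $\int$ of $\partial_x v$ from the zero-trace face to $\Gamma_i$, squares (for \eqref{Gammai}) or takes absolute values and uses H\"older (for \eqref{Gammai1}), and integrates over $\Gamma_i$; the $x$-extent $l_i$ and $|\Omega_i|^{1/2}=(l_i|\Gamma_i|)^{1/2}$ appear as the constants.

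I expect the main obstacle to be purely bookkeeping rather than analytic: one must verify that for \emph{each} of the five subdomains there genuinely is a coordinate direction along which every relevant line segment both has the claimed length bound ($L$, resp.\ $l_i$, resp.\ $l_a+l_m+l_c$) \emph{and} terminates on a portion of the boundary where the function is known to vanish (or, for \eqref{poincareva}, on $\Gamma_\mathrm w$). This is where the restriction to 3D (product) geometry is used—the paper already flags, just after Theorem~\ref{tmain}, that the slicewise method ``does not work in 3D unless the additional assumption of the functions vanish at least on the solid wall basis,'' so the argument as written is adapted to exactly the configuration of Figure~\ref{fpem}; a careful statement should make the geometric hypotheses on the slabs explicit before running the one-dimensional lemmas. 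The sharp constants ($L/\sqrt2$, $r^{-1/r}$, $l_i$) then come directly from optimizing, or explicitly evaluating, the corresponding one-dimensional inequality, with no further fixed-point or compactness input needed.
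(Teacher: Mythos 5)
Your proposal is correct and follows essentially the same route as the paper: slicewise application of the fundamental theorem of calculus with Cauchy--Schwarz/H\"older along one coordinate, anchoring on the face where the trace vanishes (or, for \eqref{poincareva}, on the $y=0$ part of $\Gamma_\mathrm{w}$), and the identity $|\Omega_i|=l_i|\Gamma_i|$ for \eqref{Gammai1}. The only caveats are cosmetic: for \eqref{poincarer} and \eqref{Gammai} the integration runs in the $x$-direction across the layers (anchored at $x=x_a-l_a$), not along ``$y$-slices,'' and the anchoring face of $\Gamma_\mathrm{w}$ in \eqref{poincareva} is the $y=0$ face rather than the current-collector side, exactly as in the paper's computation.
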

\begin{proof}
The proof is standard by applying the fundamental theorem of calculus, by making recourse of the density of \(C^1\)-functions in \(H^1\). 

For every \( v\in V(\Omega_f)\), we have \(v= 0 \)  a.e.  on \(  \Gamma_\mathrm{in}  \), then we find
\begin{equation}\label{poincarev}
|v(x,y,z)|^2 = \left|\int_{0}^y \partial_y v(x,t,z) \dif{t}\right|^2\leq y \int_{0}^{L}  |\partial_y  v|^2\dif{t},
\quad \forall (x,y,z)\in \Omega_\mathrm{f},
\end{equation}
taking the Cauchy--Schwarz  inequality into account. Hence, integrating over \(\Omega_\mathrm{f}\) we obtain \eqref{poincare2}.

In the domain \(\Omega_i\)(\(i=\) a,c),  we find for \(i=\) a (similarly for \(i=\) c)
\begin{align*}
|v(x,y,z)|^2 &= \left|v(x,0,z) + \int_{0}^y \partial_y v(x,t,z) \dif{t}\right|^2\\
&\leq 2\left(v^2(x,0,z) +y \int_{0}^{L}  |\partial_y  v|^2\dif{t}\right),
\quad \forall (x,y,z)\in \Omega_\mathrm{a},
\end{align*}
taking  the  inequality  \((a+b)^2\leq 2(a^2+b^2)\),  for all \(a,b\geq 0\), 
and the Cauchy--Schwarz  inequality into account. Hence, integrating over \(\Omega_\mathrm{a}\) we obtain \eqref{poincareva}.

For every \(v\in V_r(\Omega_p)\), \(v(x_a-l_a,y,z)=0\) for a.e. \( (y,z)\in ]0,L[\times ]0,H[\), then we have
\begin{align*}
|v(x,y,z)|^r& = \left|\int_{x_a-l_a}^x \partial_x v(t,y,z) \dif{t}\right|^r\\
&\leq (x- (x_a-l_a))^{r-1} \int_{x_a-l_a}^{x_c+l_c}  |\partial_x v|^r\dif{t},
\quad \forall (x,y,z)\in \Omega_\mathrm{p},
\end{align*}
taking the H\"older inequality into account. Hence, integrating over \(\Omega_\mathrm{p}\) we obtain \eqref{poincarer}.
Observe that \((x_c+l_c) - (x_a-l_a)=l_a+l_m+l_c\) and 
\[
\int_{x_a-l_a}^{x_c+l_c} (x- (x_a-l_a))^{r-1}\dif{x} = \frac{(l_a+l_m+l_c)^r}{r}.
\]

In the domain \(\Omega_i\)(\(i=\) a,c), if \(v=0\) a.e. on \(\Gamma\), \textit{i.e.}  at \(x=l_f=x_a-l_a\) and \(x=x_c+l_c\).
Then, we find for \(i=\) a (similarly for \(i=\) c)
\[
|v(x_a,y,z)|^2 =\left|\int_{x_a-l_a}^{x_a} \partial_x v(t,y,z)\dif{t}\right|^2 \leq l_a \int_{x_a-l_a}^{x_a}  |\partial_x v|^2\dif{t},\]
 taking the Cauchy--Schwarz  inequality into account. Hence, integrating over \(]0,L[\times [0,H[\) we obtain \eqref{Gammai}.

Consequently, it follows \eqref{Gammai1} by observing that \(|\Omega_i|=l_i |\Gamma_i|\),
which concludes the proof of Proposition \ref{proppoincare}.
   \end{proof}

\begin{remark}
The estimate \eqref{poincare2} is also valid for vector-valued functions due to the Euclidean norm.
Recall that the notation \(\dif{x}\) refers to the 2D \(\dif{x}\dif{y}\) and the 3D \(\dif{x}\dif{y}\dif{z}\).
\end{remark}

Next,  we precise the required Poincar\'e--Sobolev constants, according to the domain \(\Omega_\mathrm{f}\), for the two-dimensional space.
\begin{proposition}[n=2]\label{propn2}
For every \(v\in H^1(\Omega_\mathrm{f})\) such that
\begin{description}
\item[(i)]
if  \(v(0,x_2)=v(x_1,0)=0\) for all \((x_1,x_2)\in ]0,l_f[\times]0,L[\), then
\begin{align}\label{casei2}
\|v\|_{4,\Omega_\mathrm{f}}^2\leq \frac{ \sqrt{ l_f L}}{2} \|\nabla v\|_{2,\Omega_\mathrm{f}}^2.
\end{align}
\item[(ii)]
if  \(v(x_2=0)=0\), then
\begin{align}\label{caseii2}
\|v\|_{4,\Omega_\mathrm{f}}^2\leq \|v\|_{2,\Gamma_\mathrm{w}}^2+\max\{l_f,L\} \|\nabla v\|_{2,\Omega_\mathrm{f}}^2.
\end{align}
\end{description}
\end{proposition}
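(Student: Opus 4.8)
The plan is to reduce both estimates to a single rectangular channel and then reassemble, since $\Omega_\mathrm{f}=\Omega_\mathrm{fuel}\cup\Omega_\mathrm{air}$ is a disjoint union of two rectangles. I would work on a generic rectangle $R=]0,a[\times]0,b[$ with $(a,b)=(l_f,L)$ — the air channel being handled by an obvious translation and reflection — and, exactly as in the proof of Proposition \ref{proppoincare}, first establish the estimates for $v\in C^1(\overline R)$ via the fundamental theorem of calculus and then pass to general $v\in H^1$ by density, using continuity on $H^1(R)$ of the $L^4(R)$, $H^1(R)$ and $L^2(\partial R)$ norms. The two per-channel inequalities are then added: for case (i) one uses $\alpha^4+\beta^4\le(\alpha^2+\beta^2)^2$ and for case (ii) the superadditivity $\alpha^2+\beta^2\le(\alpha+\beta)^2$ on $[0,\infty)$, which is exactly what turns a sum of two channel bounds into the single claimed right-hand side, once one notes that $\|\nabla v\|_{2,\Omega_\mathrm{fuel}}^2+\|\nabla v\|_{2,\Omega_\mathrm{air}}^2=\|\nabla v\|_{2,\Omega_\mathrm{f}}^2$ and likewise for the wall traces.

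On $R$ the mechanism is $v^4=v^2\cdot v^2$, the two factors being bounded by sup-type estimates in the two coordinate directions. For case (i), from $v(0,x_2)=0$ and the Cauchy--Schwarz inequality,
\[
v^2(x_1,x_2)\le\Bigl(\int_0^{a}|\partial_1 v|(t,x_2)\dif{t}\Bigr)^2\le a\int_0^{a}|\partial_1 v|^2(t,x_2)\dif{t}=:\Phi(x_2)
\]
(the right-hand side being independent of $x_1$), and symmetrically, from $v(x_1,0)=0$, $v^2(x_1,x_2)\le b\int_0^{b}|\partial_2 v|^2(x_1,s)\dif{s}=:\Psi(x_1)$. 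Hence $v^4\le\Phi(x_2)\Psi(x_1)$ pointwise, and integrating this product — the first factor depending only on $x_2$, the second only on $x_1$ — gives $\int_R v^4\le ab\,\|\partial_1 v\|_{2,R}^2\,\|\partial_2 v\|_{2,R}^2$. With $\|\partial_1 v\|_{2,R}^2\,\|\partial_2 v\|_{2,R}^2\le\tfrac14\|\nabla v\|_{2,R}^4$ and a square root this is precisely \eqref{casei2}.

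In case (ii) only the inlet condition $v(x_1,0)=0$ is available, so $v^2\le\Psi(x_1)$ is kept, while the other factor must carry the boundary trace: from $v(x_1,x_2)=v(0,x_2)+\int_0^{x_1}\partial_1 v(t,x_2)\dif{t}$, together with $(\alpha+\beta)^2\le2\alpha^2+2\beta^2$ and Cauchy--Schwarz,
\[
v^2(x_1,x_2)\le 2\,v^2(0,x_2)+2a\int_0^{a}|\partial_1 v|^2(t,x_2)\dif{t}=:\widetilde\Phi(x_2).
\]
Integrating $v^4\le\widetilde\Phi(x_2)\Psi(x_1)$ yields
\[
\int_R v^4\le\bigl(2\|v\|_{2,\Gamma_\mathrm{w}\cap\partial R}^2+2a\,\|\partial_1 v\|_{2,R}^2\bigr)\,b\,\|\partial_2 v\|_{2,R}^2 .
\]
Writing $A=\|v\|_{2,\Gamma_\mathrm{w}\cap\partial R}^2$, $p=\|\partial_1 v\|_{2,R}^2$, $q=\|\partial_2 v\|_{2,R}^2$ and $m=\max\{a,b\}$, the right-hand side equals $2Abq+2ab\,pq$, and I would check $2Abq+2ab\,pq\le\bigl(A+m(p+q)\bigr)^2$ by discarding the nonnegative term $A^2$ and noting $2Abq\le2Amq\le2Am(p+q)$ together with $2ab\,pq\le2m^2pq\le m^2(p+q)^2$, the last step using $(p+q)^2\ge4pq$; a square root then gives \eqref{caseii2}.

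The step I expect to need the most care is precisely this recombination in case (ii): after the reassembly of the two channels, a sum of nonnegative products must be squeezed into a single squared binomial whose trace coefficient is exactly $1$ and whose gradient coefficient is exactly $\max\{l_f,L\}$, no larger. This is why I would split via $(\alpha+\beta)^2\le2\alpha^2+2\beta^2$ rather than use a $(1+\varepsilon)$-Young inequality — the slack in $2ab\le2m^2$ and $p^2+q^2\ge2pq$ is then just enough to absorb the extra factors of $2$ upon taking the square root. Everything else (the fundamental theorem of calculus, the separation of the double integral into a product of one-dimensional integrals, the density argument) is routine and mirrors the proof of Proposition \ref{proppoincare}.
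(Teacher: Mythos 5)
Your proposal is correct and follows essentially the same route as the paper's proof: the fundamental theorem of calculus in each coordinate direction combined with the Cauchy--Schwarz inequality to factor \(v^4\) into an \(x_1\)-dependent and an \(x_2\)-dependent bound, per rectangular channel, followed by elementary algebra (\(2pq\le(p+q)^2\), respectively \(pq\le\tfrac14(p+q)^2\)) to reach exactly the stated constants. The only differences are bookkeeping: you compare squares to the squared binomial where the paper takes square roots and uses \(2ab\le a^2+b^2\), and you make the reassembly of the two channels explicit where the paper simply declares the second channel analogous.
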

\begin{proof}
Both estimates may be proved in half domain \(]0,l_f[\times ]0,L[\).  Analogous proofs can be done in the remaining domain \(\Omega_\mathrm{f}\).

\textbf{Case (i)}
 We use  the fundamental theorem of calculus 
\begin{align}
v(x_1,x_2) &=  \int_0^{x_1}\partial_1 v(t,x_2)\dif{t}\label{ftc1} \\
& = \int_0^{x_2}\partial_2 v(x_1,t)\dif{t}.\label{ftc2}
\end{align}
Arguing as in  \eqref{poincarev}  we have
\begin{align} 
J=\int_0^L\int_0^{l_f} v^4\dif{x_1}\dif{x_2} \leq \int_0^{l_f} \max_{0\leq x_2\leq L}v^2\dif{x_1} \int_0^L \max_{0\leq x_1\leq l_f} v^2\dif{x_2} \nonumber\\
\leq Ll_f \int_0^L\int_0^{l_f} |\partial_2v|^2 \dif{x_1}\dif{x_2}\int_0^L\int_0^{l_f} |\partial_1 v|^2\dif{x_1}\dif{x_2}.\label{Ll2}
\end{align}
Therefore,
\begin{align*}
J^{1/2} &\leq  \sqrt{l_f L} \left(  \int_0^L\int_0^{l_f} |\partial_2v|^2\dif{x_1}\dif{x_2}\right)^{1/2} \left(  \int_0^L\int_0^{l_f} |\partial_1v|^2\dif{x_1}\dif{x_2}\right)^{1/2}\\
&\leq \frac{\sqrt{l_fL}}{2} \int_0^L\int_0^{l_f}(|\partial_1 v|^2+ |\partial_2v|^2) \dif{x_1}\dif{x_2},
\end{align*}
which concludes the proof of case (i).

\textbf{Case (ii)}
 We use  the fundamental theorem of calculus as follows
\begin{align}
v^2(x_1,x_2) &= \left(v(0,x_2)+ \int_0^{x_1}\partial_1 v(t,x_2)\dif{t} \right)^2 \label{2d1} \\
& =\left( \int_0^{x_2}\partial_2 v(x_1,t)\dif{t}\right)^2 \leq L  \int_0^{L}|\partial_2 v(x_1,t)|^2 \dif{t} .\label{2d2}
\end{align}
Adapting the argument  in \eqref{Ll2} but here with \eqref{2d1}-\eqref{2d2} we have
\begin{align*}
J=\int_0^L\int_0^{l_f} v^4\dif{x_1}\dif{x_2} \leq \int_0^{l_f} \max_{0\leq x_2\leq L}v^2\dif{x_1} \int_0^L \max_{0\leq x_1\leq l_f} v^2\dif{x_2} \nonumber\\
\leq 2L \int_0^L\int_0^{l_f} |\partial_2v|^2 \dif{x_1}\dif{x_2}\left( \int_0^L v^2(0,x_2)\dif{x_2}+l_f\int_0^L\int_0^{l_f} |\partial_1 v|^2 \dif{x_1}\dif{x_2}\right).
\end{align*}
The first term is estimated by \eqref{2d2}, while to estimate the second term we apply  the  inequality  \((a+b)^2\leq 2(a^2+b^2)\),  for all \(a,b\geq 0\), and then
the \(l_f\)-version of \eqref{2d2}.

 Next, using the  inequality  \(2ab\leq a^2+b^2\)  for all \(a,b\geq 0\), we obtain
\begin{align*}
J^{1/2}
\leq 2\left(L\int_0^L\int_0^{l_f} |\partial_2v|^2 \dif{x}\right)^{1/2}
\left( \int_0^L v^2(0,x_2)\dif{x_2}+l_f \int_0^L\int_0^{l_f} |\partial_1 v|^2 \dif{x_1}\dif{x_2}\right)^{1/2} \\
\leq   L \int_0^L\int_0^{l_f} |\partial_2v|^2 \dif{x_1}\dif{x_2} +
 \int_{\Gamma_\mathrm{w}} v^2\dif{s}+ l_f\int_0^L\int_0^{l_f} |\partial_1v|^2 \dif{x_1}\dif{x_2} .\qquad
\end{align*}
This last inequality yields \eqref{caseii2}, which concludes the proof of Proposition \ref{proppoincare}.
  \end{proof}

\begin{remark}
The argument of Ladyzhenskaya \cite[pp.8-11]{lady} works for Sobolev inequalities in the form
\begin{align}\label{lady2}
\|v\|_{4,\mathbb{R}^2}^4\leq \varepsilon \| \nabla v\|_{2,\mathbb{R}^2}^4+\frac{1}{\epsilon}\| v\|_{2,\mathbb{R}^2}^4 ;\\
\|v\|_{4,\mathbb{R}^2}^4\leq 3\varepsilon \| \nabla v\|_{2,\mathbb{R}^3}^4+\frac{1}{\epsilon}\| v\|_{2,\mathbb{R}^3}^4,
\end{align}
 for any \(\varepsilon >0\), for smooth functions that decay at infinity. 
Adapting the argument of  \cite[Lemma 1]{lady} for our domain, using  the fundamental theorem of calculus 
\begin{align*}
v^2(x_1,x_2) &=  2\int_0^{x_1}v(t,x_2)\partial_1 v(t,x_2)\dif{t} \\
& = 2\int_0^{x_2}v(x_1,t)\partial_2 v(x_1,t)\dif{t}
\end{align*}
instead \eqref{ftc1}-\eqref{ftc2}
we obtain
\[
\|v\|_{4,\Omega_\mathrm{f}}^2\leq  \sqrt{ l_f L}\|\nabla v\|_{2,\Omega_\mathrm{f}}^2.
\]
Clearly, this constant is worse than the one  obtained in \eqref{casei2}.
For reader's convenience, \(J\) in \eqref{Ll2} reads
\[
J
\leq 4 \int_0^L\int_0^{l_f} |v\partial_2v|\dif{x_1}\dif{x_2}\int_0^L\int_0^{l_f} |v\partial_1 v|\dif{x_1}\dif{x_2}.
\]
Hence,  each term is analyzed making recourse to the Poincar\'e inequality \eqref{poincare2},  in which the domain is considered.
To estimate the first term,  we take the Cauchy--Schwarz inequality into account
\begin{align*}
\int_0^L\int_0^{l_f} |v\partial_2v|\dif{x_1}\dif{x_2} &\leq \left( \int_0^L\int_0^{l_f} |v|^2\dif{x_1}\dif{x_2}\right)^{1/2}
\left( \int_0^L\int_0^{l_f} |\partial_2v|^2\dif{x_1}\dif{x_2}\right)^{1/2} \\
&\leq \frac{L}{\sqrt{2}}  \int_0^L\int_0^{l_f} |\partial_2v|^2\dif{x_1}\dif{x_2} .
\end{align*}
Analogously to estimate the second term.
\[
\int_0^L\int_0^{l_f} |v\partial_1v|\dif{x_1}\dif{x_2}\leq \frac{l_f}{\sqrt{2}}  \int_0^L\int_0^{l_f} |\partial_1v|^2\dif{x_1}\dif{x_2} .
\]
\end{remark}

For the trilinear convective term, we establish the following quantitative estimates for the two-dimensional space.
\begin{lemma}\label{lemae}
For each \( \mathbf{v}\in  \mathbf{H}^{1}(\Omega_\mathrm{f})\),  the following functional is  well defined and  continuous: 
 \(
e\in H^1(\Omega_\mathrm{f})\mapsto\int_{\Omega_\mathrm{f}} e v\nabla\cdot \mathbf{v} \dif{x},\)
for all \(v \in H^1(\Omega_\mathrm{f}).\) Moreover, 
\begin{enumerate}
\item the quantitative estimate
\begin{equation}\label{advte}
\left|\int_{\Omega_ \mathrm{f}}ev\nabla\cdot \mathbf{v}  \dif{x} \right|\leq  \sqrt{2L}
\left( \|e\|_{2,\Gamma_\mathrm{w}}^2+l_f   \| \nabla e\|_{2,\Omega_\mathrm{f}} ^2\right)^{1/2}
 \| \nabla v\|_{2,\Omega_\mathrm{f}}  \|\nabla\cdot \mathbf{v}\|_{2,\Omega_\mathrm{f}} 
\end{equation}
holds for any  \(v \in H^1_{in}(\Omega_\mathrm{f})\).
\item  the quantitative estimate
\begin{equation}\label{advtev}
\left|\int_{\Omega_ \mathrm{f}}ev\nabla\cdot \mathbf{v}  \dif{x} \right|\leq  \sqrt{L}
\left( \|e\|_{2,\Gamma_\mathrm{w}}^2+L  \| \nabla e\|_{2,\Omega_\mathrm{f}} ^2\right)^{1/2}
 \| \nabla v\|_{2,\Omega_\mathrm{f}}  \|\nabla\cdot \mathbf{v}\|_{2,\Omega_\mathrm{f}} 
\end{equation}
holds for any  \(e,v \in H^1_{in}(\Omega_\mathrm{f})\).
\end{enumerate}
\end{lemma}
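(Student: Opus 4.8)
The plan is to deduce both inequalities from the two‑dimensional Ladyzhenskaya‑type estimates of Proposition~\ref{propn2} after a single use of H\"older's inequality. For the first assertion, I would note that in dimension $n=2$ one has $H^1(\Omega_\mathrm{f})\hookrightarrow L^4(\Omega_\mathrm{f})$, so that $ev\in L^2(\Omega_\mathrm{f})$ whenever $e,v\in H^1(\Omega_\mathrm{f})$, while $\nabla\cdot\mathbf{v}\in L^2(\Omega_\mathrm{f})$; the Cauchy--Schwarz inequality then gives both the integrability and the bound $\big|\int_{\Omega_\mathrm{f}}ev\,\nabla\cdot\mathbf{v}\,\dif{x}\big|\le\|ev\|_{2,\Omega_\mathrm{f}}\|\nabla\cdot\mathbf{v}\|_{2,\Omega_\mathrm{f}}\le\|e\|_{4,\Omega_\mathrm{f}}\|v\|_{4,\Omega_\mathrm{f}}\|\nabla\cdot\mathbf{v}\|_{2,\Omega_\mathrm{f}}$, which is continuous with respect to $e\in H^1(\Omega_\mathrm{f})$. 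The whole quantitative content is then to estimate $\|ev\|_{2,\Omega_\mathrm{f}}$ (equivalently, the two $L^4$‑factors) by the explicit right‑hand sides.

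As in the proof of Proposition~\ref{propn2}, I would carry out the computation on the half‑channel $]0,l_f[\times]0,L[$ and add the analogous contribution from the other half of $\Omega_\mathrm{f}$; recall that the inlet lies on $\{x_2=0\}$ and the part of $\Gamma_\mathrm{w}$ contained in $\partial\Omega_\mathrm{f}$ lies on $\{x_1=0\}$. The estimates being asymmetric in $e$ and $v$, the two factors must be controlled by the fundamental theorem of calculus in \emph{different} coordinate directions, then combined by the same double maximum/integration trick used in passing to \eqref{Ll2}. Since $v\in H^1_{in}(\Omega_\mathrm{f})$ vanishes on the inlet, integrating in the longitudinal direction as in \eqref{2d2} gives $\max_{0\le x_2\le L}v^2(x_1,\cdot)\le L\int_0^L|\partial_2 v(x_1,t)|^2\,\dif{t}$ with no boundary term, which is the source of the single power $\|\nabla v\|_{2,\Omega_\mathrm{f}}$ and of the factor $\sqrt{L}$. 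For the $e$‑factor I would distinguish the two parts: in~(1), where $e\in H^1(\Omega_\mathrm{f})$ is arbitrary, I integrate in the transverse direction starting from the wall $\{x_1=0\}$, which after $(a+b)^2\le 2(a^2+b^2)$ and the Cauchy--Schwarz inequality produces $\max_{0\le x_1\le l_f}e^2(\cdot,x_2)\le 2e^2(0,x_2)+2l_f\int_0^{l_f}|\partial_1 e(t,x_2)|^2\,\dif{t}$, hence, after integration in $x_2$, the quantity $2\|e\|_{2,\Gamma_\mathrm{w}}^2+2l_f\|\nabla e\|_{2,\Omega_\mathrm{f}}^2$, the extra $\sqrt{2}$ in \eqref{advte} being exactly the price of that crude inequality; in~(2), where moreover $e\in H^1_{in}(\Omega_\mathrm{f})$, the estimate \eqref{caseii2} applies directly to $e$ and, using $l_f<L$, yields $\|e\|_{4,\Omega_\mathrm{f}}^2\le\|e\|_{2,\Gamma_\mathrm{w}}^2+L\|\nabla e\|_{2,\Omega_\mathrm{f}}^2$. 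Multiplying the relevant one‑dimensional bounds, applying the Cauchy--Schwarz inequality in the remaining variable, and summing the two halves of $\Omega_\mathrm{f}$ then gives \eqref{advte} and \eqref{advtev}, respectively.

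I expect the only real difficulty to be bookkeeping: for each of the two factors and in each of the two parts one must pick the coordinate direction and the boundary datum that make the products telescope into precisely the claimed constants, and one must check that no step silently uses a vanishing property of $e$ or $v$ that has not been assumed. In particular, the power $l_f$ should be enlarged to $L$ only in part~(2), and the trace term $\|e\|_{2,\Gamma_\mathrm{w}}$ must be kept as such rather than absorbed into $\|\nabla e\|_{2,\Omega_\mathrm{f}}$, since for a general $e\in H^1(\Omega_\mathrm{f})$ no explicit bound of $\|e\|_{2,\Gamma_\mathrm{w}}$ by $\|\nabla e\|_{2,\Omega_\mathrm{f}}$ is available. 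No tool beyond the fundamental theorem of calculus, H\"older's and Cauchy--Schwarz' inequalities, and Proposition~\ref{propn2} is needed.
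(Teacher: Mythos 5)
Your treatment of part (1) is essentially the paper's own argument, only repackaged: you control the $e$-factor by the fundamental theorem of calculus in the transverse direction starting from the wall (paying the factor $2$ through $(a+b)^2\le 2(a^2+b^2)$ and Cauchy--Schwarz, which produces $2\|e\|_{2,\Gamma_\mathrm{w}}^2+2l_f\|\nabla e\|_{2,\Omega_\mathrm{f}}^2$), the $v$-factor by integration from the inlet in the longitudinal direction (giving $L\|\partial_2 v\|_{2,\Omega_\mathrm{f}}^2$ with no boundary term), and you pair them with $\|\nabla\cdot\mathbf{v}\|_{2,\Omega_\mathrm{f}}$ by Cauchy--Schwarz; this reproduces \eqref{advte} with the constant $\sqrt{2L}$, exactly as in the paper.

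Part (2), however, has a genuine gap. You propose to get \eqref{advtev} by applying \eqref{caseii2} directly to $e$, i.e. $\|e\|_{4,\Omega_\mathrm{f}}^2\le\|e\|_{2,\Gamma_\mathrm{w}}^2+L\|\nabla e\|_{2,\Omega_\mathrm{f}}^2$. But once you put the $e$-factor in $L^4$, the complementary factor in the H\"older pairing $\|ev\|_{2}\le\|e\|_{4}\|v\|_{4}$ is $\|v\|_{4,\Omega_\mathrm{f}}$, and for $v\in H^1_{in}(\Omega_\mathrm{f})$ (vanishing on the inlet only) there is no bound of the form $\|v\|_{4,\Omega_\mathrm{f}}\le\sqrt{L}\,\|\nabla v\|_{2,\Omega_\mathrm{f}}$: the only available estimate is \eqref{caseii2}, which carries the wall-trace term $\|v\|_{2,\Gamma_\mathrm{w}}$ (take $v$ depending on $x_2$ alone, $v=x_2$, to see that the trace-free bound fails in the relevant geometry $l_f\ll L$), whereas the right-hand side of \eqref{advtev} contains no trace of $v$ and only the first power of $\|\nabla v\|_{2,\Omega_\mathrm{f}}$. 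Conversely, if you keep the mixed ``integral of the maximum'' pairing of part (1), then \eqref{caseii2} is not the quantity you need for the $e$-factor, and rerunning your part-(1) scheme for $e\in H^1_{in}$ merely reproves \eqref{advte}, i.e. $\sqrt{2L}$ with $l_f$, not $\sqrt{L}$ with $L$. The improvement in part (2) comes from a device your proposal never invokes: the exact identity $e^2(x_1,x_2)=e^2(0,x_2)+2\int_0^{x_1}e\,\partial_1 e\,\dif{t}$ (as in \eqref{e2wcase2}, which avoids the doubling), followed by Cauchy--Schwarz, the use of the inlet-vanishing of $e$ through \eqref{e2case2} to bound $\int_0^{l_f}e^2\dif{x_1}$ by $L\|\partial_2 e\|_{2}^2$, and Young's inequality $2ab\le a^2+b^2$; this is the step that produces the constant $\sqrt{L}$ and the term $L\|\nabla e\|_{2,\Omega_\mathrm{f}}^2$ in \eqref{advtev}, and it cannot be replaced by a citation of \eqref{caseii2}.
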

\begin{proof}
Both estimates may be proved in half domain \(]0,l_f[\times ]0,L[\).  Analogous proofs can be done in the remaining domain \(\Omega_\mathrm{f}\).
 We use  the fundamental theorem of calculus as follows
\begin{align}
v(x_1,x_2) & =  \int_0^{x_2}\partial_2 v(x_1,t)\dif{t}; \label{2dv}\\
\mbox{Case (1)} \quad
e^2(x_1,x_2)&= \left(e(0,x_2)+ \int_0^{x_1}\partial_1 e(t,x_2)\dif{t} \right)^2 ;  \label{2de}  \\
\mbox{Case (2)}\quad 
e^2(x_1,x_2) & =  \left(\int_0^{x_2} \partial_2 e(x_1,t)\dif{t}\right)^2 ; \label{e2case2}\\
e^2(x_1,x_2)&=e^2(0,x_2)+2 \int_0^{x_1}e \partial_1 e(t,x_2)\dif{t} . \label{e2wcase2}
\end{align}

We proceed as follows. We firstly apply \eqref{2dv} for \(v\) and  the Cauchy--Schwarz inequality for the integral in \(x_2\),  obtaining
\begin{align*}
I&=  \int_0^L\int_0^{l_f} |ev \nabla\cdot\mathbf{v}| \dif{x}  \\
&\leq \int_0^{l_f}  \left[\int_0^L|\partial_2 v|\dif{x_2} \left(\int_0^L | e|^2 \dif{x_2}\right)^{1/2}
\left(\int_0^L| \nabla\cdot\mathbf{v}|^2 \dif{x_2}\right)^{1/2} \right] \dif{x_1}. 
\end{align*}

\textbf{Case (1).}
Secondly we use  \eqref{2de} for \(e\),  the inequality \((a+b)^2\leq 2(a^2+b^2)\),  for all \(a,b\geq 0\), 
and  the Cauchy--Schwarz inequality for the integral in \(x_1\),  obtaining
\begin{align}
I\leq & \left( 2\int_0^L \left( e^2(0,x_2)+\left|  \int_{0}^{l_f}|\partial_1 e |\dif{x_1}\right|^2  \right) \dif{x_2}\right)^{1/2}\nonumber \\
&\times \left(\int_0^{l_f} \left| \int_0^L |\partial_2 v|\dif{x_2} \right|^{2}\dif{x_1}\right)^{1/2} 
  \left(\int_0^{l_f}\int_0^L|  \nabla\cdot\mathbf{v}|^2 \dif{x}\right)^{1/2} .\label{cotaIev1}
\end{align}
Next, using the Cauchy--Schwarz inequality
\begin{align}
\left| \int_0^L |\partial_2 v|\dif{x_2} \right|^2 &\leq  L  \int_0^{L}|\partial_2 v |^2 \dif{x_2} ; \label{lfv}\\
\left|\int_0^{l_f}| \partial_1 e | \dif{x_1}\right|^2 &\leq l_f \int_0^{l_f}| \partial_1 e|^2 \dif{x_1},\label{lfe}
\end{align}
and substituting in \eqref{cotaIev1} we conclude \eqref{advte}. 

\textbf{Case (2).}
Secondly we use \eqref{e2wcase2}  for \(e\)
and  the Cauchy--Schwarz inequality for the integral in \(x_1\),  obtaining
\begin{align}
I\leq & \left( \int_0^L \left( e^2(0,x_2)+ 2\int_{0}^{l_f}|e\partial_1 e |\dif{x_1}  \right) \dif{x_2}\right)^{1/2}\nonumber \\
&\times \left(\int_0^{l_f} \left| \int_0^L |\partial_2 v|\dif{x_2} \right|^{2}\dif{x_1}\right)^{1/2} 
  \left(\int_0^{l_f}\int_0^L|  \nabla\cdot\mathbf{v}|^2 \dif{x}\right)^{1/2} .\label{cotaIev2}
\end{align}
Next, using the Cauchy--Schwarz inequality twice and again after using \eqref{e2case2}, we find
\begin{align*}
\left| \int_0^L |\partial_2 v|\dif{x_2} \right|^2 &\leq  L  \int_0^{L}|\partial_2 v |^2 \dif{x_2} ;\\
\int_0^{l_f}|e \partial_1 e | \dif{x_1}  &\leq\left( \int_0^{l_f}|  e|^2 \dif{x_1}\right)^{1/2}  \left( \int_0^{l_f}| \partial_1 e|^2 \dif{x_1} \right)^{1/2} \\
&\leq \left(  L \int_0^{l_f}\int_0^L | \partial_2 e|^2 \dif{x_2}\dif{x_1} \right)^{1/2} \left(\int_0^{l_f}| \partial_1 e|^2 \dif{x_1} \right)^{1/2} \\
&\leq\frac{ 1}{2}  \left( L  \int_0^{l_f}\int_0^L | \partial_2 e|^2 \dif{x_2}\dif{x_1} +\int_0^{l_f}| \partial_1 e|^2 \dif{x_1} \right).
\end{align*}
Finally, we apply  the inequality \(2ab\leq a^2+b^2\) to obtain the Euclidean norm.
Substituting the above inequalities  in \eqref{cotaIev2},
 we conclude \eqref{advtev}, which finishes the proof of Proposition \ref{lemae}.
  \end{proof}

Finally,  the transport term is precised for some   exponent \(q\).
Remind that   \( \Omega_\mathrm{f} \subset\mathbb{R}^n\) is two disjoint bounded Lipschitz domains. 
\begin{lemma}\label{lemaw}
For each \( \mathbf{w}\in  \mathbf{H}^{1}(\Omega_\mathrm{f})\) being such that
\( \mathbf{w}\cdot\mathbf{n}=0\) on \(\Gamma_\mathrm{w} \) and \( \mathbf{w}=u_\mathrm{in}\mathbf{e}_2\) on \(\Gamma_\mathrm{in} \),
  the following functional is  well defined and  continuous: 
 \(
e\in H^1(\Omega_\mathrm{f})\mapsto\int_{\Omega_\mathrm{f}} \mathbf{w}\cdot\nabla e v \dif{x},\)
for all \(v \in H^1(\Omega_\mathrm{f}).\) Moreover, 
\begin{enumerate}
\item the relation
\begin{equation}\label{advt1}
\left|\int_{\Omega_\mathrm{f}} \mathbf{w}\cdot\nabla e v \dif{x} \right|\leq \|\mathbf{w}\|_{q,\Omega_\mathrm{f}} \|\nabla e\|_{2,\Omega_\mathrm{f}}
 \| v\|_{2^*,\Omega_\mathrm{f}} 
\end{equation}
holds for any \(e ,v\in H^1(\Omega_\mathrm{f})\) and  \(q=n>2\)  or \(q>n=2\). Here,
 \(2^*\) denotes  the critical Sobolev exponent if \(n>2\),  that is,  of the Sobolev embedding \(H^1(\Omega)\hookrightarrow L^{2^*}(\Omega)\).
For the sake of simplicity, we also denote by \(2^*\)  any arbitrary real number greater than one, if \(n=2\).
\item if \(n=2\), the quantitative estimate
\begin{equation}\label{advt2}
\left|\int_{\Omega_\mathrm{f}} \mathbf{w}\cdot\nabla v v \dif{x} \right|\leq  (1/2 + \sqrt{2})\sqrt{L}\left(
\|\mathbf{w}_T\|_{2,\Gamma}^2 + l_f \|\nabla \mathbf{w}\|_{2,\Omega_\mathrm{f}} ^2 \right)^{1/2}
 \| \nabla v\|_{2,\Omega_\mathrm{f}} ^2
\end{equation}
holds for any  \(v \in V(\Omega_\mathrm{f})\).
\end{enumerate}
\end{lemma}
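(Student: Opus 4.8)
The plan is to split the claim into a soft part (1), handled by Hölder's inequality and the Sobolev embedding, and a quantitative part (2), handled by the elementary line-integration device already used for Propositions \ref{proppoincare} and \ref{propn2} and for Lemma \ref{lemae}, together with the boundary data carried by \(\mathbf{w}\).

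For (1): fix \(\mathbf{w}\in\mathbf{H}^1(\Omega_\mathrm{f})\) and \(v\in H^1(\Omega_\mathrm{f})\). When \(n>2\), the exponents \(q=n\), \(2\) and \(2^*=2n/(n-2)\) are Hölder conjugate, since \(\tfrac1n+\tfrac12+\tfrac{n-2}{2n}=1\); when \(n=2\), for any \(q>2\) one sets \(2^*\) by \(\tfrac1{2^*}=\tfrac12-\tfrac1q\in(0,\tfrac12)\), which is admissible because \(H^1(\Omega_\mathrm{f})\hookrightarrow L^{2^*}(\Omega_\mathrm{f})\) for every finite \(2^*\) in dimension two. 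Applying Hölder with these three exponents to \(\int_{\Omega_\mathrm{f}}\mathbf{w}\cdot\nabla e\,v\,\dif{x}\) gives \eqref{advt1}, and the embeddings \(\mathbf{H}^1(\Omega_\mathrm{f})\hookrightarrow\mathbf{L}^q(\Omega_\mathrm{f})\), \(H^1(\Omega_\mathrm{f})\hookrightarrow L^{2^*}(\Omega_\mathrm{f})\) (valid on the Lipschitz domain \(\Omega_\mathrm{f}\)) show that the right-hand side is finite and depends linearly and boundedly on \(e\); hence the functional is well defined and continuous.

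For (2) I would work on one channel at a time. Take the fuel channel \(\Omega_1=\,]0,l_f[\,\times\,]0,L[\), so that \(\Gamma_\mathrm{in}\) lies on \(\{x_2=0\}\), \(\Gamma_\mathrm{out}\) on \(\{x_2=L\}\), the solid wall \(\Gamma_\mathrm{w}\) on \(\{x_1=0\}\), and the face \(\Gamma_1:=\{x_1=l_f\}\) of \(\Omega_1\) on the porous interface \(\Gamma\); the air channel is handled in the same way. Write \(\int_{\Omega_1}\mathbf{w}\cdot\nabla v\,v=\int_{\Omega_1} w_1\,\partial_1 v\,v+\int_{\Omega_1} w_2\,\partial_2 v\,v\). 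As \(v=0\) on \(\{x_2=0\}\), the fundamental theorem of calculus and Cauchy--Schwarz give \(|v(x_1,x_2)|\le\sqrt{L}\,\|\partial_2 v(x_1,\cdot)\|_{2,(0,L)}\). In the first integral I insert this bound, then apply Cauchy--Schwarz in \(x_2\) and in \(x_1\), reaching \(\sqrt{L}\,\big(\sup_{x_1}\|w_1(x_1,\cdot)\|_{2,(0,L)}\big)\|\partial_1 v\|_{2,\Omega_1}\|\partial_2 v\|_{2,\Omega_1}\); since \(\mathbf{w}\cdot\mathbf{n}=0\) on \(\Gamma_\mathrm{w}\) forces \(w_1(0,\cdot)=0\), another use of the fundamental theorem of calculus in \(x_1\) gives \(\|w_1(x_1,\cdot)\|_{2,(0,L)}^2\le l_f\|\partial_1 w_1\|_{2,\Omega_1}^2\le l_f\|\nabla\mathbf{w}\|_{2,\Omega_1}^2\), and \(2ab\le a^2+b^2\) turns \(\|\partial_1 v\|_{2,\Omega_1}\|\partial_2 v\|_{2,\Omega_1}\) into \(\tfrac12\|\nabla v\|_{2,\Omega_1}^2\), so the first integral is at most \(\tfrac12\sqrt{L}\,(l_f\|\nabla\mathbf{w}\|_{2,\Omega_1}^2)^{1/2}\|\nabla v\|_{2,\Omega_1}^2\). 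In the second integral I pull out \(\max_{x_2}|v(x_1,\cdot)|\le\sqrt{L}\,\|\partial_2 v(x_1,\cdot)\|_{2,(0,L)}\), apply Cauchy--Schwarz in \(x_2\) to the remaining \(\int|w_2||\partial_2 v|\), and integrate in \(x_1\), which leaves \(\sqrt{L}\,\big(\sup_{x_1}\|w_2(x_1,\cdot)\|_{2,(0,L)}\big)\|\partial_2 v\|_{2,\Omega_1}^2\); now the face of \(\Omega_1\) in the \(x_1\)-direction that carries usable data is \(\Gamma_1\), on which \(w_2\) is precisely the tangential trace of \(\mathbf{w}\), so writing \(w_2(x_1,\cdot)=w_2(l_f,\cdot)-\int_{x_1}^{l_f}\partial_1 w_2\) and using \((a+b)^2\le2(a^2+b^2)\) gives \(\|w_2(x_1,\cdot)\|_{2,(0,L)}^2\le2\big(\|\mathbf{w}_T\|_{2,\Gamma_1}^2+l_f\|\nabla\mathbf{w}\|_{2,\Omega_1}^2\big)\), so the second integral is at most \(\sqrt{2L}\,\big(\|\mathbf{w}_T\|_{2,\Gamma_1}^2+l_f\|\nabla\mathbf{w}\|_{2,\Omega_1}^2\big)^{1/2}\|\nabla v\|_{2,\Omega_1}^2\). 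Adding the two and absorbing \(l_f\|\nabla\mathbf{w}\|_{2,\Omega_1}^2\) into \(\|\mathbf{w}_T\|_{2,\Gamma_1}^2+l_f\|\nabla\mathbf{w}\|_{2,\Omega_1}^2\) produces the constant \(\tfrac12+\sqrt2\) on \(\Omega_1\).

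It remains to reassemble the two channels. With \(\alpha_i=\big(\|\mathbf{w}_T\|_{2,\Gamma_i}^2+l_f\|\nabla\mathbf{w}\|_{2,\Omega_i}^2\big)^{1/2}\) and \(\beta_i=\|\nabla v\|_{2,\Omega_i}\), \(i=1,2\), one has \(\alpha_i\le(\alpha_1^2+\alpha_2^2)^{1/2}\), hence \(\alpha_1\beta_1^2+\alpha_2\beta_2^2\le(\alpha_1^2+\alpha_2^2)^{1/2}(\beta_1^2+\beta_2^2)\); since \(\alpha_1^2+\alpha_2^2=\|\mathbf{w}_T\|_{2,\Gamma}^2+l_f\|\nabla\mathbf{w}\|_{2,\Omega_\mathrm{f}}^2\) and \(\beta_1^2+\beta_2^2=\|\nabla v\|_{2,\Omega_\mathrm{f}}^2\), the per-channel estimates combine into \eqref{advt2} with no loss of constant. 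I expect the main obstacle to be exactly this bookkeeping in (2): recognizing, for each scalar component of \(\mathbf{w}\), which face of the rectangle provides an exploitable trace — the wall \(\{x_1=0\}\) for \(w_1\) via \(\mathbf{w}\cdot\mathbf{n}=0\), the interface face \(\Gamma_1\) for \(w_2\) via the tangential trace \(\mathbf{w}_T\) — and then keeping track of the numerical factors so that the two terms add up to exactly \(\tfrac12+\sqrt2\) and the two-channel assembly does not inflate it. As in the preceding proofs, the line-by-line use of the fundamental theorem of calculus is justified by density of \(C^1\) in \(H^1\) on the Lipschitz domain \(\Omega_\mathrm{f}\).
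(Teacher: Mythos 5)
Your proposal is correct and follows essentially the same route as the paper: part (1) by H\"older's inequality with the Sobolev embedding, and part (2) by the per-channel fundamental-theorem-of-calculus device, using $w_1=0$ on the wall for the term $w_1\partial_1 v\,v$ (constant $\sqrt{L}/2$) and the tangential trace of $w_2$ on $\Gamma$ for the term $w_2\partial_2 v\,v$ (constant $\sqrt{2L}$), which sum to $(1/2+\sqrt{2})\sqrt{L}$. Your explicit two-channel reassembly via $\alpha_1\beta_1^2+\alpha_2\beta_2^2\le(\alpha_1^2+\alpha_2^2)^{1/2}(\beta_1^2+\beta_2^2)$ only makes precise a step the paper leaves implicit.
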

\begin{proof}
The relation \eqref{advt1} is consequence of the H\"older inequality,  for \(1/q+1/2^*=1/2\) \textit{i.e.} \(2q/(q-2)=2^*\),
with  \(q=n>2\)  or \(q>n=2\)
to guarantee \( \mathbf{H}^{1}(\Omega_\mathrm{f})\hookrightarrow  \mathbf{L}^{q}(\Omega_\mathrm{f})\).

To prove \eqref{advt2}, instead the direct application of \eqref{advt1} with abstract constants of Sobolev and Poincar\'e
\[
\left|\int_{\Omega_\mathrm{f}} \mathbf{w}\cdot\nabla e v \dif{x} \right|\leq S^*C_\Omega \|\mathbf{w}\|_{q,\Omega_\mathrm{f}}
 \|\nabla e\|_{2,\Omega_\mathrm{f}} \| \nabla v\|_{2,\Omega_\mathrm{f}} ,
\]
where 
  \(S^*\) denotes  the continuity constant of the Sobolev embedding \(H^1(\Omega)\hookrightarrow L^{2^*}(\Omega)\)
and   \(C_{\Omega}\) denotes the Poincar\'e constant,
we analyze,  term by term, the integral (if \(n=2\))
\[
\int_0^L\int_0^{l_f} \mathbf{w}\cdot\nabla v v \dif{x} = \int_0^L\int_0^{l_f} (w_1\partial_1 v+ w_2\partial_2 v )v \dif{x} ,
\]
analogous for \(  \int_0^L \int_{l_a+l_m+l_c}^{l_a +l_m +l_c+l_f} \),
considering the assumptions
\begin{enumerate}
\item  \(w_1(x_1=0)=w_1(x_2=0)=0\) and \(v(x_2=0)=0\);
\item \(w_2(x_2=0)=u_\mathrm{in}\) and \(v(x_2=0)=0\).
\end{enumerate}
In the sequel, we use the notation  \(\dif{x}\)  to the 2D \(\dif{x_1}\dif{x_2}\).

\textsf{Term 1.} We firstly apply \eqref{ftc2} for \(v\) and  the Cauchy--Schwarz inequality for the integral in \(x_2\), 
secondly \eqref{ftc1} for \(w_1\)  and again the Cauchy--Schwarz inequality but now   for the integral in \(x_1\).
Next, we apply  the Cauchy--Schwarz inequality twice  for the appearance of \(l_f\) and \(L\), and finally the inequality \(2ab\leq a^2+b^2\) to obtain the Euclidean norm.
That is,
\begin{align*}
 \int_0^L\int_0^{l_f} |w_1\partial_1 v v| \dif{x}  \leq \int_0^{l_f}  \left[\int_0^L|\partial_2 v|\dif{x_2} \left(\int_0^L | w_1|^2 \dif{x_2}\right)^{1/2}
\left(\int_0^L| \partial_1 v|^2 \dif{x_2}\right)^{1/2} \right] \dif{x_1}  \\
\leq\left(\int_0^L \left|\int_0^{l_f} |\partial_1 w_1 | \dif{x_1} \right|^2\dif{x_2}\right)^{1/2} 
\left(\int_0^{l_f} \left| \int_0^L |\partial_2 v|\dif{x_2} \right|^{2}\dif{x_1}\right)^{1/2}   \left(\int_0^{l_f}\int_0^L| \partial_1 v|^2 \dif{x}\right)^{1/2} \\
\leq\left(l_f \int_0^L \int_0^{l_f}| \partial_1 w_1 |^2 \dif{x}\right)^{1/2} 
\left(L\int_0^{l_f} \int_0^L |\partial_2 v|^2 \dif{x}  \right)^{1/2}\left(\int_0^{l_f} \int_0^L| \partial_1 v|^2 \dif{x}  \right)^{1/2}\\
\leq \frac{\sqrt{L}}{2} \left(l_f \int_0^L \int_0^{l_f}| \partial_1 w_1 |^2 \dif{x}\right)^{1/2} 
\left(\int_0^{l_f} \int_0^L (| \partial_1 v|^2+  |\partial_2 v|^2) \dif{x_2}\dif{x_1}  \right).\qquad
\end{align*}

\textsf{Term 2.} Analogously, we proceed for the second term  firstly applying \eqref{ftc2} for \(v\) and  the Cauchy--Schwarz inequality   for the integral in \(x_2\), obtaining
\begin{align*}
I&= \int_0^L\int_0^{l_f} |w_2\partial_2 v v | \dif{x} \\
&\leq \int_0^{l_f}  \left[\int_0^L|\partial_2 v|\dif{x_2} \left(\int_0^L | w_2|^2 \dif{x_2}\right)^{1/2}
\left(\int_0^L| \partial_2 v|^2 \dif{x_2}\right)^{1/2} \right] \dif{x_1}. 
\end{align*}
Secondly, for \(w_2\) we use the following version of  \eqref{2d1}:
\begin{align*}
w_2^2(x_1,x_2) &= \left(w_2(l_f,x_2)+ \int_{l_f}^{x_1}\partial_1 w_2 (t,x_2)\dif{t} \right)^2 \\
&\leq 2\left( w_2^2(l_f,x_2)+l_f  \int_{0}^{l_f}|\partial_1 w_2 (t,x_2)|^2\dif{t} \right).
\end{align*}
where we apply the inequality \((a+b)^2\leq 2(a^2+b^2)\) and  the Cauchy--Schwarz inequality   for the appearance of \(l_f\).  
Finally, we substitute the above inequality and simultaneously
we apply  the Schwarz inequality twice  in the integral of \(x_1\) and again in the integral of \(x_2\)  for the appearance of \(L\).
That is, 
\begin{align*}
I\leq\left( 2\int_0^L \left( w_2^2(l_f,x_2)+l_f  \int_{0}^{l_f}|\partial_1 w_2 (t,x_2)|^2\dif{t} \right) \dif{x_2}\right)^{1/2} 
L ^{1/2}\int_0^{l_f}\int_0^L| \partial_2 v|^2 \dif{x} \\
\leq \sqrt{2L}  \left(\int_0^L  w_2^2(l_f,x_2) \dif{x_2} + l_f \int_0^L \int_0^{l_f}| \partial_1 w_2 |^2 \dif{x}\right)^{1/2} 
\left(\int_0^{l_f} \int_0^L| \partial_2 v|^2 \dif{x}  \right).
\end{align*}

Then, we conclude \eqref{advt2}
by summing
\[
 \int_0^L\left(\int_0^{l_f} +\int_{l_a +l_m +l_c}^{l_a +l_m +l_c+l_f}\right)  | \mathbf{w}\cdot\nabla v v|\dif{x}
\leq C_{PS} \left(
\|\mathbf{w}_T\|_{2,\Gamma}^2 + l_f \|\nabla \mathbf{w}\|_{2,\Omega_\mathrm{f}} ^2 \right)^{1/2} \| \nabla v\|_{2,\Omega_\mathrm{f}} ^2,
\]
with \(C_{PS} =(1/2 + \sqrt{2}) \sqrt{L} \).
  \end{proof}

\section{Existence of auxiliary solutions}
\label{auxtdt}

The existence of a unique weak solution \((\mathbf{U},p)=( \mathbf{U},p) (\pi, \bm{\varrho},\xi)\)
 to the variational equality \eqref{wvfup}  can be stated under the assumption of 
 \(\bm{\varrho}\in [ L^4(\Omega_\mathrm{f}) ]^{2}\)  and \(n=2,3\) \cite{pemfc}.
Faced with Lemma \ref{lemae} we establish its existence  as follows.
\begin{proposition}[Auxiliary velocity-pressure pair]\label{proppxi}
Let \(\pi \in L^2(\Omega_\mathrm{p})\),
 \(\bm{\varrho}\in [ H_{in}^1(\Omega_\mathrm{f}) ]^{2}\) and  \(\xi\in H^1(\Omega)\) be given.
 Under the assumptions (H1), (H4) and (H7),
the Dirichlet--BJS/Stokes--Darcy problem \eqref{wvfup} admits  a unique weak solution
 \((\mathbf{U},p)\in \mathbf{V}(\Omega_f)\times ( H(\Omega_p)/\mathbb{R}) \).
Moreover, if  \(n=2\),  the quantitative estimate for  \(\mathbf{u}=\mathbf{U} +\mathbf{u}_0 \)
\begin{align}\label{cotaup}
\frac{\mu_\#}{2C_K} \|  \nabla\mathbf{u} \|_{2,\Omega_\mathrm{f} }^2+ \beta_\#\|\mathbf{u}_T\|_{2,\Gamma} ^2 +
\frac{K_l}{\mu^\#} \|  \nabla p\|_{2,\Omega_\mathrm{p}}^2 \nonumber\\ 
\leq \left( \sqrt{2L} \frac{R_M }{\sqrt{\mu_\#}}
\left( \|\xi\|_{2,\Gamma_\mathrm{w}}^2+l_f   \| \nabla \xi\|_{2,\Omega_\mathrm{f}} ^2\right)^{1/2}
 \| \nabla \bm{\varrho}\|_{2,\Omega_\mathrm{f}} +C_0 \right)^2
\end{align}
holds, with \(C_K>1\) being the Korn constant and \(C_0\) being defined by
\begin{equation}
C_0:= \sqrt{\mu^\#}\|D\mathbf{u}_0\|_{2,\Omega_\mathrm{f}} 
+ \frac{\lambda^\#}{\sqrt{\mu_\#}}  \|\nabla\cdot\mathbf{u}_0\|_{2,\Omega_\mathrm{f}}  .
\label{C0}
\end{equation}
\end{proposition}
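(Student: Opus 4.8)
The plan is to split the coupled Dirichlet--BJS/Stokes--Darcy problem \eqref{wvfup} into its velocity and pressure components and recognize it as a saddle-point-type bilinear form on the Hilbert space $\mathbf{V}(\Omega_f)\times(H(\Omega_p)/\mathbb{R})$. First I would define the bilinear form
\[
\mathcal{A}\big((\mathbf{U},p),(\mathbf{v},v)\big)=\int_{\Omega_\mathrm{f}}\mu(\xi)D\mathbf{U}:D\mathbf{v}\,\dif{x}+\int_{\Omega_\mathrm{f}}\lambda(\xi)\nabla\cdot\mathbf{U}\,\nabla\cdot\mathbf{v}\,\dif{x}+\int_\Gamma\beta(\xi)\mathbf{U}_T\cdot\mathbf{v}_T\,\dif{s}+\int_{\Omega_\mathrm{p}}\frac{K_g(\pi)}{\mu(\xi)}\nabla p\cdot\nabla v\,\dif{x}+\int_\Gamma p\,\mathbf{v}\cdot\mathbf{n}\,\dif{s}-\int_\Gamma\mathbf{U}\cdot\mathbf{n}\,v\,\dif{s}.
\]
The two interface terms $\int_\Gamma p\,\mathbf{v}\cdot\mathbf{n}\,\dif{s}$ and $-\int_\Gamma\mathbf{U}\cdot\mathbf{n}\,v\,\dif{s}$ are skew-adjoint, so they drop out of the quadratic form $\mathcal{A}\big((\mathbf{U},p),(\mathbf{U},p)\big)$; this is the standard trick that makes the coupled system coercive rather than merely inf-sup stable. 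Continuity of $\mathcal{A}$ follows from (H1) (the bounds $\mu\le\mu^\#$, $|\lambda|\le\lambda^\#$, $K_l\le K_g\le K_l^\#$), (H4) ($\beta\le\beta^\#$), the trace theorem for the interface integrals on $\Gamma$, and the fact that both $\mathbf{V}(\Omega_f)$ and $H(\Omega_p)/\mathbb{R}$ carry Poincaré-type norms by Proposition~\ref{proppoincare}. Coercivity of $\mathcal{A}$ on the diagonal uses $\mu\ge\mu_\#$ together with the Korn inequality \eqref{korn} to control $\|\nabla\mathbf{U}\|_{2,\Omega_\mathrm{f}}^2$ by $C_K\|D\mathbf{U}\|_{2,\Omega_\mathrm{f}}^2$, the sign condition $\nu=\lambda+\mu/n\ge0$ from \eqref{nu3} to discard the $\lambda$ term harmlessly (or absorb it), $\beta\ge\beta_\#$ for the slip term, and $K_g\ge K_l$, $\mu\le\mu^\#$ for the Darcy term, giving a coercivity constant proportional to $\min\{\mu_\#/(2C_K),\beta_\#,K_l/\mu^\#\}$ — precisely the three coefficients appearing on the left of \eqref{cotaup}.

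Next I would check that the right-hand side of \eqref{wvfup}, namely $(\mathbf{v},v)\mapsto R_M\int_{\Omega_\mathrm{f}}\varrho\xi\,\nabla\cdot\mathbf{v}\,\dif{x}-G(\xi,\mathbf{u}_0,\mathbf{v},v)$, defines a bounded linear functional on $\mathbf{V}(\Omega_f)\times H(\Omega_p)$. The lifting term $G(\xi,\mathbf{u}_0,\mathbf{v},v)$ is bounded by $\sqrt{\mu^\#}\|D\mathbf{u}_0\|_{2,\Omega_\mathrm{f}}$ and $\lambda^\#\|\nabla\cdot\mathbf{u}_0\|_{2,\Omega_\mathrm{f}}$ against $\|D\mathbf{v}\|_{2,\Omega_\mathrm{f}}$ — this is exactly where the constant $C_0$ of \eqref{C0} enters. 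The genuinely nontrivial term is the trilinear coupling $R_M\int_{\Omega_\mathrm{f}}\varrho\xi\,\nabla\cdot\mathbf{v}\,\dif{x}$ with $\varrho=\varrho_1+\varrho_2\in H^1_{in}(\Omega_\mathrm{f})$ and $\xi\in H^1(\Omega)$; for $n=2$, Lemma~\ref{lemae}, estimate \eqref{advte}, bounds this by $\sqrt{2L}\big(\|\xi\|_{2,\Gamma_\mathrm{w}}^2+l_f\|\nabla\xi\|_{2,\Omega_\mathrm{f}}^2\big)^{1/2}\|\nabla\varrho\|_{2,\Omega_\mathrm{f}}\|\nabla\cdot\mathbf{v}\|_{2,\Omega_\mathrm{f}}$, and $\|\nabla\cdot\mathbf{v}\|_{2,\Omega_\mathrm{f}}\le\|D\mathbf{v}\|_{2,\Omega_\mathrm{f}}$. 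With continuity, coercivity and a bounded right-hand side in hand, the Lax--Milgram theorem yields a unique $(\mathbf{U},p)\in\mathbf{V}(\Omega_f)\times(H(\Omega_p)/\mathbb{R})$ solving \eqref{wvfup}.

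For the quantitative estimate \eqref{cotaup}, I would test \eqref{wvfup} with $(\mathbf{v},v)=(\mathbf{U},p)$, so the skew interface terms vanish and the left side dominates
\[
\frac{\mu_\#}{2C_K}\|\nabla\mathbf{U}\|_{2,\Omega_\mathrm{f}}^2+\beta_\#\|\mathbf{U}_T\|_{2,\Gamma}^2+\frac{K_l}{\mu^\#}\|\nabla p\|_{2,\Omega_\mathrm{p}}^2,
\]
where I keep a factor $\mu_\#/2$ rather than $\mu_\#$ on the gradient term to absorb the cross term between $D\mathbf{U}$ and $D\mathbf{u}_0$ coming from $G$; the right side is bounded, via \eqref{advte} and the bound on $G$, by $\big(\sqrt{2L}\,R_M\mu_\#^{-1/2}(\|\xi\|_{2,\Gamma_\mathrm{w}}^2+l_f\|\nabla\xi\|_{2,\Omega_\mathrm{f}}^2)^{1/2}\|\nabla\bm{\varrho}\|_{2,\Omega_\mathrm{f}}+C_0\big)$ times the square root of the left side, after dividing through and squaring. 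Finally I would pass from $\mathbf{U}$ to $\mathbf{u}=\mathbf{U}+\mathbf{u}_0$ using the triangle inequality, which is where the bookkeeping with the $\mu_\#/2$ factor pays off and produces exactly \eqref{cotaup}. The main obstacle is not any single step but the careful tracking of \emph{explicit} constants: one must choose Young's-inequality weights so that the lifting contribution $C_0$ and the trilinear contribution combine additively inside the square, rather than producing a worse constant, and one must invoke the two-dimensional estimate \eqref{advte} (which relies on $\varrho\in H^1_{in}$, i.e.\ the inlet trace vanishing) in place of the abstract Sobolev embedding used in \cite{pemfc} — that substitution is precisely the quantitative improvement this proposition is designed to deliver.
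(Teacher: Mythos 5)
Your proposal follows essentially the same route as the paper: existence and uniqueness by Lax--Milgram (the paper delegates the continuity/coercivity verification to \cite{pemfc}, which rests on the same cancellation of the skew interface terms you invoke), and the quantitative bound by testing \eqref{wvfup} with \((\mathbf{v},v)=(\mathbf{U},p)\), using (H1), (H4), (H7), the H\"older, Young and Korn inequalities and Lemma \ref{lemae}(1), exactly as in the paper's proof. The only point needing care is the constant bookkeeping: in 2D one has \(\|\nabla\cdot\mathbf{v}\|_{2,\Omega_\mathrm{f}}\le\sqrt{2}\,\|D\mathbf{v}\|_{2,\Omega_\mathrm{f}}\) rather than \(\le\|D\mathbf{v}\|_{2,\Omega_\mathrm{f}}\), and the lower bound \(\lambda\geq-\mu/n\) does not by itself let you ``discard'' the \(\lambda\)-term while retaining a full \(\mu_\#\|D\mathbf{U}\|_{2,\Omega_\mathrm{f}}^2\), so the Young weights must be chosen so that only the factor \(\mu_\#/2\) (hence \(\mu_\#/(2C_K)\) after Korn) survives on the left, as in \eqref{cotaup}.
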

\begin{proof}
 The existence of a unique weak solution \((\mathbf{U},p)\in \mathbf{V}(\Omega_f)\times (H(\Omega_p)/\mathbb{R})\)
  to the variational equality \eqref{wvfup} is obtained by the Lax--Milgram lemma (for details see \cite{pemfc}).

The quantitative estimate \eqref{cotaup} follows from taking \((\mathbf{v},v)=(\mathbf{U},p)\) as a test function in \eqref{wvfup}. Indeed, 
we take the H\"older and Young inequalities into account,  apply  the assumptions  \eqref{mu}-\eqref{defkl},  \eqref{gamm},  and (H7),
and use the Korn inequality \eqref{korn}. Faced with \(n=2\),  Lemma \ref{lemae} (1) concludes the  quantitative estimate \eqref{cotaup}.
  \end{proof}

The continuous dependence can be established as follows, which proof may be found in  \cite{pemfc}.
\begin{proposition}[Continuous dependence]\label{upm}
Suppose that the assumptions of Proposition \ref{proppxi} are fulfilled.
Let \(\{\pi_m\}\),  \(\{\bm{\varrho}_m\}\)   and \(\{\xi_m\}\) be sequences such that  
 \(\pi_m\rightarrow\pi\) in \(L^2(\Omega_\mathrm{p})\),
\(\bm{\varrho}_m\rightarrow\bm{\varrho} \) in \([L^4(\Omega_\mathrm{f})]^2\),
and \(\xi_m\rightharpoonup \xi\) in \(H^1(\Omega)\), respectively.
If \((\mathbf{u}_m,p_m)= (\mathbf{U}+\mathbf{u}_0,p)(\pi_m, \bm{\varrho}_m, \xi_m)\) are the unique solutions to \eqref{wvfup}\(_m\),
 then \begin{align}\label{um}
\mathbf{U}_m \rightharpoonup \mathbf{U} \mbox{ in }\mathbf{V}(\Omega_f);\\
p_m\rightharpoonup p\mbox{ in }H(\Omega_p),\label{pm}
\end{align}
with \((\mathbf{u},p)= (\mathbf{U}+\mathbf{u}_0,p)(\pi, \bm{\varrho}, \xi)\) being the solution to \eqref{wvfup}.
\end{proposition}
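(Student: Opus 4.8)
The plan is to combine a uniform a priori bound with a weak-compactness/uniqueness argument; this is the standard route and is the one referenced in \cite{pemfc}. Note first that a direct difference estimate on $(\mathbf{U}_m-\mathbf{U},p_m-p)$ is not convenient here, since controlling $\int_{\Omega_\mathrm{f}}(\mu(\xi_m)-\mu(\xi))D\mathbf{U}:D\mathbf{v}$ would require extra integrability of $D\mathbf{U}$, which is not available. \textbf{Step 1 (uniform bounds).} Since $\pi_m\to\pi$ in $L^2(\Omega_\mathrm{p})$, $\bm{\varrho}_m\to\bm{\varrho}$ in $[L^4(\Omega_\mathrm{f})]^2$ and $\xi_m\rightharpoonup\xi$ in $H^1(\Omega)$, the three sequences are bounded in $L^2(\Omega_\mathrm{p})$, $[L^4(\Omega_\mathrm{f})]^2$ and $H^1(\Omega)$, respectively. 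Testing \eqref{wvfup}\(_m\) with $(\mathbf{v},v)=(\mathbf{U}_m,p_m)$, the two interface terms $\pm\int_\Gamma p_m\mathbf{U}_m\cdot\mathbf{n}$ cancel, and using (H1), (H4) together with the Korn inequality \eqref{korn} on the left, and H\"older's inequality together with $H^1(\Omega_\mathrm{f})\hookrightarrow L^4(\Omega_\mathrm{f})$ and (H7) on the right, one gets $c\big(\|\nabla\mathbf{U}_m\|_{2,\Omega_\mathrm{f}}^2+\|\mathbf{U}_{m,T}\|_{2,\Gamma}^2+\|\nabla p_m\|_{2,\Omega_\mathrm{p}}^2\big)\le C\|\nabla\mathbf{U}_m\|_{2,\Omega_\mathrm{f}}$ with $c,C>0$ independent of $m$ (alternatively invoke \eqref{cotaup} when $n=2$). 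Hence $\{\mathbf{U}_m\}$ is bounded in $\mathbf{V}(\Omega_f)$ and, by the Deny--Lions inequality \eqref{deny}, $\{p_m\}$ is bounded in $H(\Omega_p)/\mathbb{R}$.

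\textbf{Step 2 (extraction and passage to the limit).} Up to a subsequence (not relabelled), $\mathbf{U}_m\rightharpoonup\widetilde{\mathbf{U}}$ in $\mathbf{V}(\Omega_f)$ and $p_m\rightharpoonup\widetilde p$ in $H(\Omega_p)/\mathbb{R}$; by Rellich--Kondrachov and compactness of the trace, also $\mathbf{U}_m\to\widetilde{\mathbf{U}}$ in $\mathbf{L}^2(\Omega_\mathrm{f})$ and in $\mathbf{L}^2(\Gamma)$, $p_m\to\widetilde p$ in $L^2(\Omega_\mathrm{p})$ and in $L^2(\Gamma)$, and refining once more $\pi_m\to\pi$, $\xi_m\to\xi$ a.e.\ (in $\Omega_\mathrm{p}$, in $\Omega$, on $\Gamma$ and on $\Gamma_\mathrm{w}$), while $\xi_m\to\xi$ in $L^4(\Omega_\mathrm{f})$. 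Fix $(\mathbf{v},v)\in\mathbf{V}(\Omega_f)\times H(\Omega_p)$. By the Carath\'eodory continuity and the uniform bounds in (H1), (H4), the coefficients $\mu(\xi_m),\lambda(\xi_m),K_g(\pi_m),1/\mu(\xi_m),\beta(\xi_m)$ converge a.e.\ to $\mu(\xi),\lambda(\xi),K_g(\pi),1/\mu(\xi),\beta(\xi)$ and stay uniformly bounded; hence by dominated convergence $\mu(\xi_m)D\mathbf{v}\to\mu(\xi)D\mathbf{v}$, $\lambda(\xi_m)\nabla\cdot\mathbf{v}\to\lambda(\xi)\nabla\cdot\mathbf{v}$, $\frac{K_g(\pi_m)}{\mu(\xi_m)}\nabla v\to\frac{K_g(\pi)}{\mu(\xi)}\nabla v$ strongly in $L^2$, and $\beta(\xi_m)\mathbf{v}_T\to\beta(\xi)\mathbf{v}_T$ strongly in $L^2(\Gamma)$. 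Pairing these strongly convergent factors with the weakly convergent $D\mathbf{U}_m$, $\nabla\cdot\mathbf{U}_m$, $\nabla p_m$ and the strongly convergent trace $\mathbf{U}_{m,T}$, every bilinear term on the left of \eqref{wvfup}\(_m\) passes to the limit; the two interface terms $\int_\Gamma p_m\mathbf{v}\cdot\mathbf{n}$ and $\int_\Gamma\mathbf{U}_m\cdot\mathbf{n}\,v$ pass to the limit by $L^2(\Gamma)$-convergence of the traces. On the right, $\varrho_m\xi_m\to\varrho\xi$ in $L^2(\Omega_\mathrm{f})$ (product of two $L^4$-convergent sequences), so $R_M\int_{\Omega_\mathrm{f}}\varrho_m\xi_m\nabla\cdot\mathbf{v}\to R_M\int_{\Omega_\mathrm{f}}\varrho\xi\nabla\cdot\mathbf{v}$, and $G(\xi_m,\mathbf{u}_0,\mathbf{v},v)\to G(\xi,\mathbf{u}_0,\mathbf{v},v)$ again by dominated convergence. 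Therefore $(\widetilde{\mathbf{U}},\widetilde p)$ solves \eqref{wvfup} with data $(\pi,\bm{\varrho},\xi)$.

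\textbf{Step 3 (conclusion).} By the uniqueness part of Proposition \ref{proppxi}, $(\widetilde{\mathbf{U}},\widetilde p)=(\mathbf{U},p)$. Since the limit is independent of the extracted subsequence, a standard subsequence argument gives that the whole sequence converges, which is \eqref{um}--\eqref{pm} (the convergence of $p_m$ being understood in $H(\Omega_p)/\mathbb{R}$, equivalently after normalising the representatives to zero mean).

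\textbf{Main obstacle.} The only delicate point is the passage to the limit in the terms carrying the $\xi_m$- and $\pi_m$-dependent coefficients, where $\xi_m$ is merely weakly convergent in $H^1$. The resolution is that each such coefficient multiplies a \emph{fixed} test function, so a.e.\ convergence (along a subsequence) together with the uniform $L^\infty$ bounds of (H1), (H4) suffices, via dominated convergence, to upgrade the coefficient-times-test-function products to strong $L^2$ (resp.\ $L^2(\Gamma)$) convergence; these then pair with the weakly convergent gradients of $\mathbf{U}_m$ and $p_m$, and with the (compactly) convergent traces.
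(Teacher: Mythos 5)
Your argument is correct and follows essentially the same route the paper relies on (deferred to \cite{pemfc} and mirrored in the proof of Proposition \ref{propttm}): uniform energy bounds from the a priori estimate, extraction of weakly convergent subsequences, strong convergence of the Carath\'eodory coefficients against fixed test functions via Nemytskii continuity and dominated convergence, passage to the limit in \eqref{wvfup}\(_m\), and the Lax--Milgram uniqueness to upgrade subsequential to full-sequence convergence. Your hedge of invoking \eqref{cotaup} in Step 1 is the right call, since the raw coercivity of the \(\mu,\lambda\) form under \eqref{nu3} is exactly what Proposition \ref{proppxi} already encapsulates.
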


The existence of a unique weak  solution 
\(( \bm{\Upsilon},\Theta, \phi_{cc})=(\bm{\rho},  \theta, \phi) (\mathbf{w},  \bm{\varrho},\xi, \Phi)\)
 to the variational equalities \eqref{aux-pdi1}-\eqref{aux-phicc}  can be stated under the assumption of 
\(\mathbf{w}\in \mathbf{L}^{q}(\Omega_\mathrm{f})\) for 
\(q\geq n>2 \) or \(q>n=2\), 
\(\Phi\in L^t(\Omega_\mathrm{a}\cup\Omega_\mathrm{c})\),  with
\(t\geq 2n/(n+2) \) if \(n>2\) or  \( t>1 \) if \( n=2\),
 and \(n=2,3\) \cite{pemfc}.
Faced with Lemma \ref{lemaw} we establish its 2D existence as follows.
\begin{proposition}[Auxiliary partial density-temperature-potential triplet]\label{proptt}
Let  \( n=2\) and \(\theta_e\in L^2(\Gamma_\mathrm{w})\).
Let  \(\mathbf{w}\in \mathbf{H}^{1}(\Omega_\mathrm{f})\) 
be such that 
\begin{equation}\label{defw}
\left(
\|\mathbf{w}_T\|_{2,\Gamma}^2 + l_f \|\nabla \mathbf{w}\|_{2,\Omega_\mathrm{f}} ^2 \right)^{1/2} < \|  \mathbf{w}\|_{1,2,\Omega_\mathrm{f}}  < root_1 
, \end{equation}
 for \(l_f<1\) and \(root_1\) being the positive root of the quadratic polynomial \( 4\min_{i} a_{i,\#}- 2(1 + 2 \sqrt{2})\sqrt{L}t-Lt^2=0\).
Let   \((\varrho_1, \varrho_2,\xi) \in  [H^1(\Omega)]^3\) 
and 
\(\Phi\in L^t(\Omega_\mathrm{a}\cup\Omega_\mathrm{c})\),  with \( t>1 \),
 be given.
Under the assumptions  (H2)-(H3), (H5)-(H6)  and (H8)-(H9),
the  variational problem \eqref{aux-pdi1}-\eqref{aux-phicc} admits a unique solution \((\bm{\Upsilon},\Theta, \phi_{cc})\in  [V(\Omega)]^3\times V(\Omega_p) \).
Moreover, the quantitative estimate
\begin{align}\label{cotatriple}
\sum_{i=1}^2 \left(a_{i,\#} -(1/2 + \sqrt{2})\sqrt{L} \|\mathbf{w}\|_{1,2,\Omega_\mathrm{f}}-\frac{L}{4}\|\nabla \mathbf{w}\|_{2,\Omega_\mathrm{f}} ^2 \right) 
\|\nabla\Upsilon_i\|_{2,\Omega_\mathrm{f}}^2 \nonumber \\
+ \sum_{i=1}^2\min\{  a_{i,\#} ,a_{i,m}\} \|\nabla\Upsilon_i\|_{2,\Omega_\mathrm{p}}^2  
+\min\{ a_{3,\#},a_{3,m}\} \|\nabla\Theta\|_{2,\Omega}^2+  \frac{h_\#}{2} \|\Theta\|_{2,\Gamma_\mathrm{w}}^2 \nonumber \\
 + a_{4,m}  \|  \nabla\phi\|_{2,\Omega_\mathrm{m}} ^2
+\frac{\sigma_{\#}}{2} \|  \nabla\phi\|_{2,\Omega_\mathrm{a}\cup\Omega_\mathrm{c}} ^2 
\leq \frac{(S^*\sigma^\#)^2}{2 k_\#} \|\Phi\|_{t,\Omega_\mathrm{a}\cup\Omega_\mathrm{c}}^2
+  \frac{h^\#}{2}\|\theta_e\|_{2,\Gamma_\mathrm{w}}^2 +\mathcal{B}_0
\end{align}
holds, for \(\phi=\phi_{cc}+E_\mathrm{cell}\chi_{\Omega_\mathrm{c}}\).
 Here, \(S^*=S(\Omega_\mathrm{a}\cup\Omega_\mathrm{c},t')\) and \(\mathcal{B}_0\) is  defined by
\begin{align}
\mathcal{B}_0&:= \sum_{i=1}^2 \left(\frac{D_i^\#}{2}+  \frac{2}{\epsilon_6} \frac{((D'_{i})^\#)^2}{k_\#}\right)\| \nabla\rho_{i,0}\|_{2,\Omega}^2+
 \frac{1}{\epsilon_1}  \sum_{\genfrac{}{}{0pt}{2}{i,j=1}{i\not= j}}^2\frac{(D^\#_{ij})^2}{ D_{i,m}}  \|\nabla \rho_{j,0}\|_{2,\Omega_\mathrm{m}}^2\nonumber  \\
&+ \frac{F^2}{ \epsilon_8 M_1^2}  \frac{ (D_{1,m}^\#)^2}{\sigma_{m,\#}}  \|\nabla \rho_{1,0}\|_{2,\Omega_\mathrm{m}}^2
 + L   \sum_{i=1}^2  \| \rho_{i,0}\|_{\infty,\Omega_\mathrm{f}}^2  \nonumber\\
&+  \frac{ \kappa^\# }{2}  \|\nabla \theta_{0}\|_{2,\Omega}^2+
 \frac{1}{\epsilon_2}  \sum_{i=1}^2\frac{(S^\#_{i})^2}{k_\#}\| \nabla\theta_{0}\|_{2,\Omega}^2 
+ \frac{1}{\epsilon_9} \sigma_m^\#(\alpha^\#)^2\| \nabla\theta_{0}\|_{2,\Omega_\mathrm{m}}^2 .\label{defb0}
\end{align}
For the sake of simplicity, it is assumed that \(D_{i,m}\#\leq D_i^\#\) (\(i=1,2\)),
 \(\epsilon_1=\epsilon_4\) and  \(\epsilon_2=\epsilon_5\). 
\end{proposition}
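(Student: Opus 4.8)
The plan is to recast the coupled problem \eqref{aux-pdi1}--\eqref{aux-phicc} as a single operator equation on the Hilbert space $X:=[V(\Omega)]^3\times V(\Omega_\mathrm{p})$ and to apply the Browder--Minty theorem, and then to read off \eqref{cotatriple} from the coercivity computation. Writing $\mathbf{Y}=(\Upsilon_1,\Upsilon_2,\Theta,\phi_{cc})$ and $\mathbf{v}=(v_1,v_2,v_3,w)\in X$, the sum of the left-hand sides of \eqref{aux-pdi1}--\eqref{aux-phicc} defines $\langle\mathcal{A}\mathbf{Y},\mathbf{v}\rangle=\langle\mathcal{L}\mathbf{Y},\mathbf{v}\rangle+\langle\mathcal{N}\mathbf{Y},\mathbf{v}\rangle$, where $\mathcal{L}$ collects all the volume integrals — these are \emph{linear} in $\mathbf{Y}$, since $\bm{\varrho},\xi,\mathbf{w},\Phi$ are frozen and the coefficients $D_i(\xi)$, $D_{12}(\varrho_2,\xi)$, $\varrho_iS_i(\varrho_i,\xi)$, $k(\xi)$, $\sigma(\bm{\varrho},\xi)$, $\Pi(\xi)$, $\alpha_\mathrm{S}(\xi)$, $\sigma_m(\varrho_2,\xi)$, $\psi(\varrho_1)\kappa(\xi)/\xi$ are fixed $L^\infty$-functions by (H2)--(H3) — while $\mathcal{N}$ collects the two Butler--Volmer interface integrals built from $j_a,j_c$, which by (H6) are superpositions of increasing odd continuous functions with the trace maps $V(\Omega_\mathrm{p})\to L^2(\Gamma_\ell)$ (compact since $n=2$), hence monotone, bounded and hemicontinuous. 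Consequently $\mathcal{A}$ is bounded and hemicontinuous on $X$, and its (strict) monotonicity will follow from the coercivity of $\mathcal{L}$ established below. The right-hand sides of \eqref{aux-pdi1}--\eqref{aux-phicc} define a functional $\mathbf{F}\in X'$: the only delicate contribution is $v\mapsto\int_{\Omega_\mathrm{a}\cup\Omega_\mathrm{c}}\sigma(\bm{\varrho},\xi)\,\Phi\,v\,\dif{x}$, bounded by $\sigma^\#\|\Phi\|_{t,\Omega_\mathrm{a}\cup\Omega_\mathrm{c}}\,S^*\|v\|_{t',\Omega_\mathrm{a}\cup\Omega_\mathrm{c}}$ through the $2$D embedding $V(\Omega)\hookrightarrow L^{t'}(\Omega_\mathrm{a}\cup\Omega_\mathrm{c})$ with $S^*=S(\Omega_\mathrm{a}\cup\Omega_\mathrm{c},t')$, valid for every $t>1$ — this is exactly where $n=2$ enters; the remaining load terms $g_1,g_2,g_3$ and the $\theta_e$-boundary term are handled with (H2), (H5) and (H8)--(H9).

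The core of the argument is \textbf{coercivity}, which I would obtain by testing \eqref{aux-pdi1}--\eqref{aux-phicc} with $\mathbf{v}=(\Upsilon_1,\Upsilon_2,\Theta,\phi_{cc})$ and adding the four identities. The diagonal principal parts yield $\sum_i\int_\Omega D_i(\xi)|\nabla\Upsilon_i|^2+\int_\Omega k(\xi)|\nabla\Theta|^2+\int_{\Omega_\mathrm{p}}\sigma(\bm{\varrho},\xi)|\nabla\phi_{cc}|^2+\int_{\Gamma_\mathrm{w}}h_c(\xi)\Theta^2$; the two $j_\ell$-terms equal $\int_{\Gamma_\ell}j_\ell(\phi_{cc,\ell}-\phi_{cc,m})\,(\phi_{cc,\ell}-\phi_{cc,m})\,\dif{s}\ge0$ by monotonicity with $j_\ell(0)=0$, and are discarded. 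Each cross-effect coupling — Soret, Dufour, binary diffusion $D_{12},D_{21}$, electro-osmotic drag, Peltier, Seebeck and the $\psi(\varrho_1)\kappa(\xi)/\xi$ term — is split by Young's inequality with the nine weights $\epsilon_1,\dots,\epsilon_9$ and the bounds \eqref{pmax}--\eqref{Dij}, so that after absorption the surviving diagonal coefficients are precisely $a_{1,\#},a_{1,m},a_{2,\#},a_{2,m},a_{3,\#},a_{3,m},a_{4,m}$, positive by \eqref{defai}--\eqref{defa4}; this is the Legendre--Hadamard mechanism announced in Section~\ref{strat}. The background profiles $\nabla\rho_{i,0},\nabla\theta_0$ produced by writing $\bm{\rho}=\bm{\Upsilon}+\bm{\rho}_0$ and $\theta=\Theta+\theta_0$ split off, after Young, into the constant $\mathcal{B}_0$ of \eqref{defb0}. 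On the channel $\Omega_\mathrm{f}$ the convective self-term $\int_{\Omega_\mathrm{f}}\Upsilon_i\,\mathbf{w}\cdot\nabla\Upsilon_i\,\dif{x}$ is bounded, via Lemma~\ref{lemaw}(2) and \eqref{defw}, by $(1/2+\sqrt2)\sqrt L\,\|\mathbf{w}\|_{1,2,\Omega_\mathrm{f}}\,\|\nabla\Upsilon_i\|_{2,\Omega_\mathrm{f}}^2$, while the data-convective term $\int_{\Omega_\mathrm{f}}\rho_{i,0}\,\mathbf{w}\cdot\nabla\Upsilon_i\,\dif{x}$ yields, after Young and \eqref{poincare2}, a further $\tfrac{L}{4}\|\nabla\mathbf{w}\|_{2,\Omega_\mathrm{f}}^2\|\nabla\Upsilon_i\|_{2,\Omega_\mathrm{f}}^2$-loss and a $\|\rho_{i,0}\|_{\infty,\Omega_\mathrm{f}}^2$-term into $\mathcal{B}_0$. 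Hence the effective $\Omega_\mathrm{f}$-coefficient of $\|\nabla\Upsilon_i\|_{2,\Omega_\mathrm{f}}^2$ is $a_{i,\#}-(1/2+\sqrt2)\sqrt L\,\|\mathbf{w}\|_{1,2,\Omega_\mathrm{f}}-\tfrac{L}{4}\|\nabla\mathbf{w}\|_{2,\Omega_\mathrm{f}}^2$, which is strictly positive because \eqref{defw} forces $\|\mathbf{w}\|_{1,2,\Omega_\mathrm{f}}<root_1$ and $root_1$ is the positive root of $4\min_i a_{i,\#}-2(1+2\sqrt2)\sqrt L\,t-Lt^2=0$. Combining this with the Poincaré and trace inequalities of Proposition~\ref{proppoincare} gives $\langle\mathcal{A}\mathbf{Y},\mathbf{Y}\rangle\ge c_0\|\mathbf{Y}\|_X^2$ for some $c_0>0$; Browder--Minty then provides a solution, and strict monotonicity (the coercive quadratic form is positive definite and $\mathcal{N}$ is monotone) its uniqueness.

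The a priori estimate \eqref{cotatriple} is the \emph{same} computation kept in sharp form: after testing with $\mathbf{Y}$ and discarding the nonnegative $j_\ell$-terms, one moves the $\Phi$-term (bounded via Young as $\frac{(S^*\sigma^\#)^2}{2k_\#}\|\Phi\|_{t,\Omega_\mathrm{a}\cup\Omega_\mathrm{c}}^2$ at the price of $\tfrac{k_\#}{2}\|\nabla\Theta\|_{2,\Omega}^2$ absorbed on the left), the $\theta_e$-term (split as $\frac{h^\#}{2}\|\theta_e\|_{2,\Gamma_\mathrm{w}}^2$ plus a half of $\int_{\Gamma_\mathrm{w}}h_c(\xi)\Theta^2$ absorbed, leaving $\frac{h_\#}{2}\|\Theta\|_{2,\Gamma_\mathrm{w}}^2$), and the loads $g_i$ to the right-hand side, assembling them all into $\mathcal{B}_0$; one then recognises on the left exactly the weighted combination of $\|\nabla\Upsilon_i\|_{2,\Omega_\mathrm{f}}^2$, $\|\nabla\Upsilon_i\|_{2,\Omega_\mathrm{p}}^2$, $\|\nabla\Theta\|_{2,\Omega}^2$, $\|\Theta\|_{2,\Gamma_\mathrm{w}}^2$, $\|\nabla\phi\|_{2,\Omega_\mathrm{m}}^2$ and $\|\nabla\phi\|_{2,\Omega_\mathrm{a}\cup\Omega_\mathrm{c}}^2$ appearing in \eqref{cotatriple}, using $\nabla\phi=\nabla\phi_{cc}$. \textbf{The main obstacle} will be the coercivity step: choosing and tracking the weights $\epsilon_1,\dots,\epsilon_9$ so that the Young splitting of all cross-effect terms leaves \emph{positive} diagonal remainders — i.e. verifying that (H3) is the correct hypothesis — and absorbing the channel convective term through the quantitative Lemma~\ref{lemaw}(2), which is available only for $n=2$ and is precisely what dictates the smallness requirement \eqref{defw} through the threshold $root_1$. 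Everything else — hemicontinuity and boundedness of $\mathcal{A}$, the sign of the Butler--Volmer terms, and $\mathbf{F}\in X'$ — is routine once the $2$D Sobolev embeddings are in hand.
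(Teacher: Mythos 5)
Your proposal is correct and follows essentially the same route as the paper: existence and uniqueness via the Browder--Minty theorem applied to the operator built from the frozen-coefficient volume terms plus the monotone Butler--Volmer boundary terms, and the estimate \eqref{cotatriple} obtained by testing with $(\Upsilon_1,\Upsilon_2,\Theta,\phi_{cc})$, splitting the cross-effect terms with the weights $\epsilon_1,\dots,\epsilon_9$ to recover $a_{i,\#},a_{i,m},a_{3},a_{4,m}$, absorbing the convective term through Lemma~\ref{lemaw}(2) under \eqref{defw}, and sending the background profiles and the $\Phi$- and $\theta_e$-terms to the right-hand side via \eqref{poincare2}, the embedding $V(\Omega)\hookrightarrow L^{t'}$ and Young's inequality. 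No essential difference from the paper's argument.
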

\begin{proof}
Let \(\mathbf{w}\in \mathbf{H}^1(\Omega_\mathrm{f})\),   \( (\bm{\varrho}, \xi) \in  [ H^1(\Omega) ]^3\)  and 
\(\Phi\in L^t(\Omega_\mathrm{a}\cup\Omega_\mathrm{c})\),  be fixed,  \(t>1\).

 The existence of a unique weak solution  \((\bm{\Upsilon},\Theta, \phi_{cc} )\in  [V(\Omega)]^3 \times V(\Omega_p)\)
 to the variational equalities \eqref{aux-pdi1}-\eqref{aux-phicc}  can be obtained by   the  Browder--Minty Theorem (cf. \cite{pemfc}).
Indeed,  the operator
\(T:  [V(\Omega)]^{3}\times V(\Omega_p)  \rightarrow  \left( [V(\Omega)]^{3}\times V(\Omega_p) \right)'\), defined  by
\begin{align*}
\langle T(\mathbf{Y}),\mathbf{v}\rangle =\int_{\Omega}\mathsf{A}( \bm{\varrho},\xi) \nabla\mathbf{Y}\cdot\nabla \mathbf{v}\dif{x}
+\sum_{i=1}^{2}\int_{\Omega_\mathrm{f}}  Y_i \mathbf{w}\cdot\nabla  v\dif{x} \\
+ \int_{\Gamma_\mathrm{w}} h_c(\xi) Y_3 v \dif{s} 
+ \sum_{\ell=a,c} \int_{\Gamma_\ell} j_{\ell} (Y_{4,\ell}-Y_{4,m}) (w_\ell-w_m)\dif{s},
\end{align*}
  where  \(\mathbf{Y}= (\bm{\Upsilon} ,\Theta, \phi_{cc} )\) and \(\mathbf{v}=(v,v, v,w)\),  
is hemicontinuous, strictly monotone and coercive (see \eqref{cotatriple}), if provided by \eqref{defw}.

Let us establish the quantitative estimate \eqref{cotatriple}.
We take \(v=\Upsilon_1\),  \(v=\Upsilon_2\), 
\(v=\Theta\) and   \(w=\phi_{cc}\) as  test functions in \eqref{aux-pdi1}, \eqref{aux-pdi2},  \eqref{aux-tt} and \eqref{aux-phicc}, respectively. 
Applying the H\"older and  Young inequalities,  and summing the  obtained expressions, we get
\begin{align} \frac{1}{2 } 
\sum_{i=1}^2 \| \sqrt{D_i(\xi)}\nabla\Upsilon_i\|_{2,\Omega}^2
+  \frac{1}{2 }  \|\sqrt{k(\xi)}\nabla\Theta\|_{2,\Omega}^2
+ \frac{1}{2 }  \| \sqrt{h_c(\xi)}\Theta\|_{2,\Gamma_\mathrm{w}}^2  \nonumber\\
+ \| \sqrt{\sigma_m(\varrho_2,  \xi)} \nabla\phi\|_{2,\Omega_\mathrm{m}}^2 
+\| \sqrt{ \sigma (\bm{\varrho},\xi)}\nabla\phi\|_{2,\Omega_\mathrm{a}\cup\Omega_\mathrm{c}}^2
\leq \sum_{i=1}^{4} \mathcal{I}_i + \mathcal{I}_0 \nonumber\\
+  (1/2 + \sqrt{2})\sqrt{L}\left(
\|\mathbf{w}_T\|_{2,\Gamma}^2 + l_f \|\nabla \mathbf{w}\|_{2,\Omega_\mathrm{f}} ^2 \right)^{1/2}
\sum_{i=1}^2\|\nabla \Upsilon_i\|_{2,\Omega_\mathrm{f}}^2 \nonumber\\
+\frac{L}{\sqrt{2}}\| \nabla\mathbf{w}\|_{2,\Omega_\mathrm{f}} \sum_{i=1}^2 \|\rho_{i,0}\|_{\infty,\Omega_\mathrm{f}}  \|\nabla\Upsilon_i\|_{2,\Omega_\mathrm{f}}\nonumber \\
+ \frac{1}{2 } \| \sqrt{h_c(\xi)}(\theta_e-\theta_0)\|_{2,\Gamma_\mathrm{w}}^2
+  \| \sigma (\bm{\varrho},\xi) \Phi\|_{t,\Omega_\mathrm{a}\cup\Omega_\mathrm{c}}\|\Theta\|_{t',\Omega_\mathrm{a}\cup\Omega_\mathrm{c}} 
 \label{cotatriple1}
\end{align} 
taking  \eqref{advt2} to the trilinear term but applying the Poincar\'e inequality \eqref{poincare2} for the corresponding non-homogeneous term.

Here, we consider
\begin{align*}
\mathcal{I}_1 :=&  \frac{1}{\epsilon_1}
\left\| \frac{D_{12}(\xi)}{\sqrt{D_1(\xi)}} \nabla \Upsilon_2 \right\|_{2,\Omega_\mathrm{m}}^2+
 \frac{1}{\epsilon_2} \left\| \frac{\varrho_1 S_{1}(\varrho_1,\xi)}{\sqrt{D_1(\xi)}} \nabla \Theta \right\|_{2,\Omega}^2 
 + \frac{1}{2\epsilon_3}  \frac{1}{R^2} \left\| \frac{ \psi(\varrho_1) \kappa(\xi) }{\xi\sqrt{D_1(\xi)}} \nabla\phi \right\|_{2,\Omega_\mathrm{m}}^2 \\
& +  \frac{\epsilon_1+ \epsilon_3}{2 } \| \sqrt{D_1(\xi)}\nabla \Upsilon_1\|_{2,\Omega_\mathrm{m}}^2 
+  \frac{\epsilon_2}{2 } \| \sqrt{D_1(\xi)}\nabla \Upsilon_1\|_{2,\Omega}^2  ; \\
\mathcal{I}_2 :=&  \frac{1}{\epsilon_4}
\left\| \frac{D_{21}(\xi)}{\sqrt{D_2(\xi)}} \nabla \Upsilon_1 \right\|_{2,\Omega_\mathrm{m}}^2+
 \frac{1}{\epsilon_5} \left\| \frac{\varrho_2 S_{2}(\varrho_2,\xi)}{\sqrt{D_2(\xi)}} \nabla \Theta \right\|_{2,\Omega}^2 \\
& +  \frac{\epsilon_4}{2 } \| \sqrt{D_2(\xi)}\nabla \Upsilon_2 \|_{2,\Omega_\mathrm{m}}^2
 +  \frac{\epsilon_5}{2 } \| \sqrt{D_2(\xi)}\nabla \Upsilon_2 \|_{2,\Omega}^2  ;\\
\mathcal{I}_{3} :=&  \frac{2}{\epsilon_6}\sum_{j=1}^{2} \left(\frac{R}{M_j}\right)^2
\left\| \frac{\xi^2 D'_{j}( \varrho_j,\xi)}{\sqrt{k(\xi)}} \nabla \Upsilon_j \right\|_{2,\Omega}^2 
 + \frac{1}{2\epsilon_7}  \left\| \frac{\Pi(\xi) \sigma_m( \varrho_2 ,\xi)}{\sqrt{k(\xi)}} \nabla\phi \right\|_{2,\Omega_\mathrm{m}}^2\\
&+ \frac{\epsilon_6}{2 } \| \sqrt{k(\xi)}\nabla \Theta \|_{2,\Omega}^2 
+ \frac{\epsilon_7}{2 } \| \sqrt{k(\xi)}\nabla \Theta \|_{2,\Omega_\mathrm{m}}^2;\\
 \mathcal{I}_{4}: =&  \frac{1}{ \epsilon_8 M_1^2}  \left \| \frac{\kappa (\xi)}{ \sqrt{ \sigma_m(\varrho_2,  \xi)} }\nabla \Upsilon_1\right\|_{2,\Omega_\mathrm{m}}^2
+ \frac{1}{\epsilon_9} \| \sqrt{ \sigma_m(\varrho_2,  \xi)} \alpha_\mathrm{S}(\xi) \nabla \Theta \|_{2,\Omega_\mathrm{m}}^2\\
&+ \frac{\epsilon_8+\epsilon_9}{2 } \| \sqrt{\sigma_m(\varrho_2,\xi)}\nabla \phi \|_{2,\Omega_\mathrm{m}}^2,
\end{align*}
for any \(\epsilon_1,\cdots,\epsilon_9>0\) being such that \(\epsilon_1+\epsilon_2+\epsilon_3<1\), \(\epsilon_4+\epsilon_5<1\),
\(\epsilon_6 +\epsilon_7<1\) and \(\epsilon_8 + \epsilon_9 <2\).
In particular,  the proton ionic conductivity \(\kappa= FD_1\)verifies \(|\kappa|\leq F D_{1,m}^\#\).

The last term  \(\mathcal{I}_0\) stand for the nonhomogeneous extensions given in (H8)
\begin{align*}
\mathcal{I}_0 :=& \frac{1}{2 } 
\sum_{i=1}^2 \| \sqrt{D_i(\xi)}\nabla\rho_{i,0}\|_{2,\Omega}^2+
 \frac{1}{\epsilon_1} \left\| \frac{D_{12}(\xi)}{\sqrt{D_1(\xi)}} \nabla \rho_{2,0}\right\|_{2,\Omega_\mathrm{m}}^2
+ \frac{1}{\epsilon_2}\left\| \frac{\varrho_1 S_{1}(\varrho_1,\xi)}{\sqrt{D_1(\xi)}} \nabla \theta_0 \right\|_{2,\Omega}^2    \\
 & +\frac{1}{\epsilon_4} \left\| \frac{D_{21}(\xi)}{\sqrt{D_2(\xi)}} \nabla\rho_{1,0}\right\|_{2,\Omega_\mathrm{m}}^2 
+ \frac{1}{\epsilon_5} \left\| \frac{\varrho_2 S_{2}(\varrho_2,\xi)}{\sqrt{D_2(\xi)}} \nabla \theta_0 \right\|_{2,\Omega}^2   \\
& +  \frac{1}{2 }  \|\sqrt{k(\xi)}\nabla\theta_0\|_{2,\Omega}^2
+\frac{2}{\epsilon_6}\sum_{j=1}^{2} \left(\frac{R}{M_j}\right)^2
\left\| \frac{\xi^2 D'_{j}( \varrho_j,\xi)}{\sqrt{k(\xi)}} \nabla \rho_{j,0} \right\|_{2,\Omega}^2 \\
&+ \frac{1}{ \epsilon_8 M_1^2}  \left \| \frac{\kappa (\xi)}{ \sqrt{ \sigma_m(\varrho_2,  \xi)} }\nabla \rho_{1,0}\right\|_{2,\Omega_\mathrm{m}}^2
+ \frac{1}{\epsilon_9} \| \sqrt{ \sigma_m(\varrho_2,  \xi)} \alpha_\mathrm{S}(\xi) \nabla \theta_0 \|_{2,\Omega_\mathrm{m}}^2.
\end{align*}

We observe that the Sobolev embedding \(V(\Omega)\hookrightarrow L^{t'}(\Omega)\) holds for \(t'>1\), with the corresponding
 Sobolev constant \(S(\Omega,t')\), for  the last term in \eqref{cotatriple1}.

For instance, we may choose  \(\epsilon_1=\epsilon_4\) and  \(\epsilon_2=\epsilon_5\). 
Therefore,  applying  the assumptions \eqref{Dif}-\eqref{Dij} on the left hand side in \eqref{cotatriple1} and
also on  \(\mathcal{I}_1,\cdots,  \mathcal{I}_4\) and \(\mathcal{I}_0\),
we may recourse to the auxiliary parameters \eqref{defai}-\eqref{defa4} to obtain  the estimate \eqref{cotatriple}.

Finally, the assumption \eqref{defw} assures the positiveness 
\(a_{i,\#} -(1/2 + \sqrt{2})\sqrt{L} \|\mathbf{w}\|_{1,2,\Omega_\mathrm{f}}-\frac{L}{4}\|\nabla \mathbf{w}\|_{2,\Omega_\mathrm{f}} ^2>0\).
  \end{proof}

\begin{remark}
The auxiliary parameters \eqref{defai}-\eqref{defa4} are dependent on the construction of the 
quantitative estimate \eqref{cotatriple}, in particular, on the choice of  \(\mathcal{I}_1,\cdots,  \mathcal{I}_4\).
\end{remark}

\begin{corollary}
Under the assumptions of Proposition \ref{proptt},  if \(a_{i,m}\leq a_{i,\#}\) (\(i=1,2\)), 
\(t=2\) and \(L^2<2\), then we have
\begin{align}\label{cotattphi}
\sum_{i=1}^2 \left(a_{i,\#} -(1/2+\sqrt{2})\sqrt{L} \|\mathbf{w}\|_{1,2,\Omega_\mathrm{f}}
-\frac{L}{4}\|\nabla \mathbf{w}\|_{2,\Omega_\mathrm{f}} ^2 \right) \|\nabla\Upsilon_i\|_{2,\Omega_\mathrm{f}}^2 \nonumber\\
+\sum_{i=1}^2 a_{i,m} \|\nabla\Upsilon_i\|_{2,\Omega_\mathrm{p}}^2 
+ a_{3}\|\nabla\Theta\|_{2,\Omega}^2  
+  \frac{h_\#}{2} \|\Theta\|_{2,\Gamma_\mathrm{w}}^2   \nonumber\\
+ a_{4,m} \|  \nabla\phi\|_{2,\Omega_\mathrm{m}} ^2 
+ \frac{\sigma_{\#}}{2} \|  \nabla\phi\|_{2,\Omega_\mathrm{a}\cup\Omega_\mathrm{c}} ^2
\leq \frac{(\sigma^\#)^2}{ k_\#} \|\Phi\|_{2,\Omega_\mathrm{a}\cup\Omega_\mathrm{c}}^2
+  \frac{h^\#}{2}\|\theta_e\|_{2,\Gamma_\mathrm{w} }^2 +\mathcal{B}_0,
\end{align}
where  \(a_3:=\min\{a_{3,\#},a_{3,m}\}\).
\end{corollary}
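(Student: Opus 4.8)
The plan is to derive \eqref{cotattphi} as a specialization of the quantitative estimate \eqref{cotatriple} of Proposition \ref{proptt}, so that no new variational machinery is needed. First I would fix $t=2$, whose conjugate exponent is $t'=2$; then the factor $S^{*}=S(\Omega_\mathrm{a}\cup\Omega_\mathrm{c},t')$ appearing in \eqref{cotatriple} becomes $S(\Omega_\mathrm{a}\cup\Omega_\mathrm{c},2)$ and the datum $\|\Phi\|_{t,\Omega_\mathrm{a}\cup\Omega_\mathrm{c}}$ becomes $\|\Phi\|_{2,\Omega_\mathrm{a}\cup\Omega_\mathrm{c}}$. Second, the hypothesis $a_{i,m}\le a_{i,\#}$ ($i=1,2$) collapses the minima on the left-hand side of \eqref{cotatriple}: $\min\{a_{i,\#},a_{i,m}\}=a_{i,m}$ for $i=1,2$, while $\min\{a_{3,\#},a_{3,m}\}$ is relabelled $a_{3}$; every remaining coefficient of \eqref{cotatriple} — the full coefficients of $\|\nabla\Upsilon_i\|_{2,\Omega_\mathrm{f}}^{2}$, $\tfrac{h_\#}{2}$, $a_{4,m}$, $\tfrac{\sigma_\#}{2}$, $\tfrac{h^\#}{2}$ and the constant $\mathcal{B}_0$ of \eqref{defb0} — is carried over verbatim.

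The only quantitative point is that the Joule coefficient on the right-hand side must pass from $\tfrac{(S^{*}\sigma^\#)^{2}}{2k_\#}$ to $\tfrac{(\sigma^\#)^{2}}{k_\#}$, i.e.\ it suffices to bound $(S^{*})^{2}\le 2$ for $t'=2$. This is exactly where the hypothesis $L^{2}<2$ enters: by \eqref{poincareva}, every $v\in H^{1}(\Omega_\mathrm{a}\cup\Omega_\mathrm{c})$ satisfies $\|v\|_{2,\Omega_\mathrm{a}\cup\Omega_\mathrm{c}}^{2}\le 2\|v\|_{2,\Gamma_\mathrm{w}}^{2}+L^{2}\|\nabla v\|_{2,\Omega_\mathrm{a}\cup\Omega_\mathrm{c}}^{2}\le 2\bigl(\|v\|_{2,\Gamma_\mathrm{w}}^{2}+\|\nabla v\|_{2,\Omega_\mathrm{a}\cup\Omega_\mathrm{c}}^{2}\bigr)$, so that the $L^{2}$-continuity constant of $\Theta\mapsto\Theta|_{\Omega_\mathrm{a}\cup\Omega_\mathrm{c}}$, relative to the norm that already occurs on the left of \eqref{cotatriple} through $\tfrac{h_\#}{2}\|\Theta\|_{2,\Gamma_\mathrm{w}}^{2}$ and $a_{3}\|\nabla\Theta\|_{2,\Omega}^{2}$, is at most $\sqrt{2}$. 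Substituting $S^{*}=\sqrt{2}$ into the right-hand side of \eqref{cotatriple} gives $\tfrac{(\sqrt{2}\,\sigma^\#)^{2}}{2k_\#}=\tfrac{(\sigma^\#)^{2}}{k_\#}$, and collecting terms yields \eqref{cotattphi}.

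The main thing to verify — and the only place where care is needed — is that the Young splitting of $\|\sigma(\bm{\varrho},\xi)\Phi\|_{2,\Omega_\mathrm{a}\cup\Omega_\mathrm{c}}\|\Theta\|_{2,\Omega_\mathrm{a}\cup\Omega_\mathrm{c}}$ used in the passage from \eqref{cotatriple1} to \eqref{cotatriple} is compatible with the explicit value $S^{*}=\sqrt{2}$: one must take the Young weight equal to $k_\#$, and then check that the resulting trace contribution is absorbed by $\tfrac12\|\sqrt{h_c(\xi)}\Theta\|_{2,\Gamma_\mathrm{w}}^{2}$ and the resulting gradient contribution on $\Omega_\mathrm{a}\cup\Omega_\mathrm{c}$ by the slack in $\tfrac12\|\sqrt{k(\xi)}\nabla\Theta\|_{2,\Omega}^{2}$ that remains after the cross-term absorptions defining $a_{3,\#}$ and $a_{3,m}$, with $L^{2}<2$ being precisely what makes these absorptions possible without degrading any of $a_{3,\#}$, $a_{3,m}$, $h_\#$. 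Everything else is bookkeeping identical to the proof of Proposition \ref{proptt}.
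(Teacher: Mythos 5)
Your argument is correct and coincides with the paper's own proof: specialize \eqref{cotatriple} to $t=t'=2$, use the hypothesis $a_{i,m}\le a_{i,\#}$ to collapse the minima, and invoke \eqref{poincareva} to get $S^{*}\le\sqrt{\max\{2,L^{2}\}}=\sqrt{2}$ when $L^{2}<2$, which turns $\tfrac{(S^{*}\sigma^{\#})^{2}}{2k_{\#}}$ into $\tfrac{(\sigma^{\#})^{2}}{k_{\#}}$. Your closing worry about re-doing the Young splitting is unnecessary (and its attribution of the absorptions to $L^{2}<2$ slightly off), since \eqref{cotatriple} is already established with the abstract constant $S^{*}$ and the corollary only substitutes its explicit value.
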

\begin{proof}
Considering \eqref{poincareva}, the constant \(S^*\) is greatly simplified by \(\sqrt{\max\{2,L^2\}}=\sqrt{2}\) for \(t=2\) and \(L^2<2\).
  \end{proof}

The continuous dependence is established as follows.
\begin{proposition}[Continuous dependence]\label{propttm}
Suppose that the assumptions of Proposition \ref{proptt} are fulfilled.
Let \(\{\mathbf{w}_m\}\),  \(\{\bm{\varrho}_m\}\),  \(\{\xi_m\}\)  and \(\{\Phi_m\}\)  be sequences such that 
\(\mathbf{w}_m\rightarrow \mathbf{w}\) in \(\mathbf{L}^q(\Omega_\mathrm{f})\),  \(q=n>2\)  or \(q>n=2\),  
\({(\varrho_1)}_m\rightharpoonup \varrho_1\) in \(H^1(\Omega)\),  \({(\varrho_2)}_m\rightharpoonup \varrho_2\) in \(H^1(\Omega)\),
 \(\xi_m\rightharpoonup\xi\) in \(H^1(\Omega)\),   
and  \(\Phi_m\rightharpoonup\Phi\) in \( L^t(\Omega_\mathrm{a}\cup\Omega_\mathrm{c})\), respectively.
If \((\bm{\Upsilon} _m,\Theta_m,(\phi_{cc})_m)= (\bm{\rho} ,\theta,\phi)(\mathbf{w}_m, \bm{\varrho}_m, \xi_m,\Phi_m)\) are the unique solutions to
 \eqref{aux-pdi1}\(_m\)-\eqref{aux-phicc}\(_m\),
 then 
\begin{align}
\label{pdm}
\bm{\Upsilon} _m \rightharpoonup \bm{\Upsilon} \mbox{ in } [H^1(\Omega)]^2;\\
\Theta_m\rightharpoonup \Theta\mbox{ in }H^1(\Omega);\label{ttm}\\
\phi_m\rightharpoonup \phi\mbox{ in }H^1(\Omega_\mathrm{p}),\label{ppm}
\end{align}
with \((\bm{\Upsilon} ,\Theta, \phi_{cc})= (\bm{\rho} ,\theta, \phi)(\mathbf{w}, \bm{\varrho}, \xi, \Phi)\) being the solution to \eqref{aux-pdi1}-\eqref{aux-phicc}.
\end{proposition}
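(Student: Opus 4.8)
The plan is to run the standard compactness argument used throughout the paper (cf. \cite{pemfc}), combining the a priori estimate of Proposition \ref{proptt} with weak--strong convergence and the uniqueness part of that proposition. First I would establish uniform bounds: applying \eqref{cotatriple} to the $m$-th problem, the right-hand side stays bounded because $\{\Phi_m\}$, being weakly convergent in $L^t(\Omega_\mathrm{a}\cup\Omega_\mathrm{c})$, is bounded there, while $\|\theta_e\|_{2,\Gamma_\mathrm{w}}$ and $\mathcal{B}_0$ are fixed (the latter depending only on the frozen extensions $\rho_{i,0},\theta_0$). On the left-hand side the coercivity constants $a_{i,\#}-(1/2+\sqrt2)\sqrt L\,\|\mathbf{w}_m\|_{1,2,\Omega_\mathrm{f}}-\tfrac L4\|\nabla\mathbf{w}_m\|_{2,\Omega_\mathrm{f}}^2$ remain bounded below by a positive constant because in the fixed-point setting $\mathbf{w}_m=\mathbf{u}(\pi_m,\bm\varrho_m,\xi_m)$ is bounded in $\mathbf{H}^1(\Omega_\mathrm{f})$ by \eqref{cotaup} with $\sup_m\|\mathbf{w}_m\|_{1,2,\Omega_\mathrm{f}}<root_1$. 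Hence $\{\bm\Upsilon_m\}$ is bounded in $[V(\Omega)]^2$, $\{\Theta_m\}$ in $V(\Omega)$, $\{(\phi_{cc})_m\}$ in $V(\Omega_p)$, so along a subsequence $\bm\Upsilon_m\rightharpoonup\bm\Upsilon^*$, $\Theta_m\rightharpoonup\Theta^*$, $(\phi_{cc})_m\rightharpoonup\phi_{cc}^*$ weakly; by Rellich--Kondrachov and compactness of the trace operator, along a further subsequence these converge strongly in every $L^s$ below the critical exponent and a.e.\ in $\Omega$, and strongly in $L^2$ and a.e.\ on $\Gamma_\mathrm{w},\Gamma_\mathrm{a},\Gamma_\mathrm{c}$; likewise $\bm\varrho_m\to\bm\varrho$ and $\xi_m\to\xi$ a.e.\ in $\Omega$ and on those boundary pieces.

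Next I would pass to the limit in \eqref{aux-pdi1}$_m$--\eqref{aux-phicc}$_m$ against an arbitrary fixed test function $(v,v,v,w)$. For each leading term $\int D_i(\xi_m)\nabla\Upsilon_i^{(m)}\cdot\nabla v$, and analogously for the terms carrying $k(\xi_m)$, $\sigma(\bm\varrho_m,\xi_m)$, $\sigma_m(\varrho_{2,m},\xi_m)$, the Carath\'eodory property, the a.e.\ convergence of $(\bm\varrho_m,\xi_m)$, the $L^\infty$-bounds of (H2)--(H3) and dominated convergence give $D_i(\xi_m)\nabla v\to D_i(\xi)\nabla v$ strongly in $L^2$, which paired with $\nabla\Upsilon_i^{(m)}\rightharpoonup\nabla\Upsilon_i^*$ yields the limit; the same weak--strong device handles all cross-effect terms, the parts involving $\nabla\rho_{j,0},\nabla\theta_0$ being obtained directly by dominated convergence. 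For the convective term $\int_{\Omega_\mathrm{f}}\Upsilon_i^{(m)}\mathbf{w}_m\cdot\nabla v$ one chooses $r$ with $1/q+1/r+1/2=1$ (possible since $q>2$, and any $r$ if $n=2$), uses $\mathbf{w}_m\to\mathbf{w}$ strongly in $L^q$, $\Upsilon_i^{(m)}\to\Upsilon_i^*$ strongly in $L^r$ (compact embedding), and $\nabla v\in L^2$, so the product converges in $L^1$; the term $g_i(\mathbf{w}_m,\xi_m,\rho_{i,0},v)$ is treated identically. The Robin terms pass by the strong $L^2(\Gamma_\mathrm{w})$ trace convergence and dominated convergence for $h_c(\xi_m)$; the boundary reaction integrals on $\Gamma_\mathrm{a},\Gamma_\mathrm{c}$ use continuity and boundedness of $j_\ell$ together with the a.e.\ trace convergence of $(\phi_{cc})_m$; and the source $\int_{\Omega_\mathrm{a}\cup\Omega_\mathrm{c}}\sigma(\bm\varrho_m,\xi_m)\Phi_m v$ converges by pairing $\Phi_m\rightharpoonup\Phi$ in $L^t$ with the strong convergence of $\sigma(\bm\varrho_m,\xi_m)v$ in $L^{t'}$ (dominated convergence, $v\in L^{t'}$ by $V(\Omega)\hookrightarrow L^{t'}(\Omega)$).

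It then follows that $(\bm\Upsilon^*,\Theta^*,\phi_{cc}^*)$ solves \eqref{aux-pdi1}--\eqref{aux-phicc} with data $(\mathbf{w},\bm\varrho,\xi,\Phi)$; by the uniqueness part of Proposition \ref{proptt} it coincides with $(\bm\Upsilon,\Theta,\phi_{cc})$. Since this limit is independent of the extracted subsequence, the standard ``every subsequence has a further subsequence converging to the same limit'' argument upgrades the convergence to the whole sequence, establishing \eqref{pdm}--\eqref{ppm}.

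The step I expect to be the main obstacle is the passage to the limit in the convective term $\int_{\Omega_\mathrm{f}}\Upsilon_i^{(m)}\mathbf{w}_m\cdot\nabla v$, since only $L^q$ (not $H^1$) convergence of $\mathbf{w}_m$ is assumed; this is resolved precisely because $q>2$ leaves room in the H\"older exponents and $H^1(\Omega_\mathrm{f})$ embeds compactly into the complementary $L^r$. A secondary delicate point is keeping the coercivity constants of \eqref{cotatriple} uniformly positive along the sequence, which hinges on the uniform $\mathbf{H}^1(\Omega_\mathrm{f})$-bound for $\mathbf{w}_m$ supplied by Proposition \ref{proppxi}.
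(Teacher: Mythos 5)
Your argument follows the paper's own proof essentially verbatim: uniform bounds from \eqref{cotatriple}, extraction of weakly convergent subsequences, the Rellich--Kondrachov embedding with the same exponent bookkeeping ($1/p=1/2-1/q$, $q>n$) to pass to the limit in the convective terms, Nemytskii continuity plus dominated convergence for the coefficients, and identification of the limit as the solution of \eqref{aux-pdi1}--\eqref{aux-phicc}. The only differences are that you make explicit what the paper leaves implicit or defers to \cite{pemfc} — the uniform positivity of the coercivity constants for the $\Omega_\mathrm{f}$ terms and the uniqueness-based upgrade from a subsequence to the whole sequence — so the proposal is correct and takes essentially the same route.
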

\begin{proof}
Let \(\{\mathbf{w}_m\}\),   \(\{\bm{\varrho}_m\}\),  \(\{\xi_m\}\)  and \(\{\Phi_m\}\)  be sequences in the conditions of the proposition,
and let \((\bm{\Upsilon} _m,\Theta_m, (\phi_{cc})_m)\) solve the corresponding  variational system \eqref{aux-pdi1}\(_m\)-\eqref{aux-phicc}\(_m\).
Thanks to the estimate \eqref{cotatriple}, we can extract a (not relabeled) subsequence \(\{(\bm{\Upsilon} _m,\Theta_m, (\phi_{cc})_m)\}\) such that 
the convergences \eqref{pdm}-\eqref{ppm} hold.   
Notice that  the Rellich--Kondrachov embedding \( H^1(\Omega)\hookrightarrow\hookrightarrow L^{p}(\Omega) \) is valid
with exponents \(q\), \(p\) and \(2\) such that 
\[
 \frac{1}{2^*}<\frac{1}{p}=\frac{1}{2}-\frac{1}{q} 
\Leftrightarrow q>n.
\] 
Thus, the convective terms converge. Also, all coefficients converge thanks to the continuity property of the Nemytskii operators
and the Lebesgue dominated convergence theorem.
For details, see \cite{pemfc}.
Therefore, the limit \((\bm{\Upsilon} ,\Theta, (\phi_{cc}))\)  solves the variational system \eqref{aux-pdi1}-\eqref{aux-phicc}.
  \end{proof}

Finally, the higher integrability of the gradient for  \(\phi_\mathrm{cc}\) is established in \cite{pemfc} as follows, by reproducing the
Gr\"oger elliptic regularity result \cite{gro89,groreh}, applying the limiting current bound \(j_L(=j_{c,L})\) and 
\begin{align*}
M_r& :=\sup\{\|v\|_{1,r,\Omega_a\cup \Omega_c}: \ v\in V_r(\Omega_a\cup \Omega_c), \| Jv\|_{(V_r(\Omega_a\cup \Omega_c))'} \leq 1\} \\
& < \sigma^\#/\sqrt{(\sigma^\#)^2-\sigma_\#^2}  
\end{align*}
for every \(r\geq 2\), where
\[
\langle J\phi,w\rangle =\int_{\Omega_\mathrm{a}\cup \Omega_\mathrm{c}} \nabla\phi\cdot\nabla w\dif{x}.
\]
\begin{proposition}[Regularity]\label{regular}
Let   \(\phi_{cc}\in V(\Omega_p)\) be the solution of the variational equality \eqref{aux-phicc}.
Then, \((\phi_{cc})|_{\Omega_\mathrm{a}\cup\Omega_\mathrm{c}}\)  belongs to the Sobolev space \(W^{1,r}(\Omega_\mathrm{a}\cup\Omega_\mathrm{c})\),
for some  \( r>2 \) depending exclusively on the boundary, and the following quantitative estimate
\begin{equation}\label{cotajoule}
\|\nabla\phi\|_{r,\Omega_a\cup \Omega_c}\leq \frac{\sigma_\#M_r}{\sigma^\#
\left(\sigma^\#-M_r \sqrt{(\sigma^\#)^2-\sigma_\#^2}\right) } j_{L} |\Gamma_{CL}|:=R_3.
\end{equation}
holds.
Moreover, under the conditions of Proposition \ref{propttm},  we have the strong convergence
\(\sigma(\bm{\varrho}_m,\xi_m)|\nabla \phi_m|^2\rightarrow\sigma (\bm{\varrho},\xi)|\nabla\phi|^2\) in \(L^{r/2}(\Omega_\mathrm{a}\cup \Omega_\mathrm{c})\).
\end{proposition}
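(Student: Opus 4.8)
The plan is to read $\phi_{cc}$, restricted to the two backing layers $\Omega_\mathrm{a}\cup\Omega_\mathrm{c}$, as the solution of a \emph{linear} scalar mixed (Dirichlet/Neumann--Robin) boundary value problem with measurable, uniformly elliptic coefficient, and then to apply the Gr\"oger $W^{1,r}$-regularity theorem together with a coefficient-freezing Neumann-series argument to obtain the explicit bound. First I would exploit that the broken test space $V_r(\Omega_\mathrm{p})$ imposes no continuity across the internal interface $\Gamma_\mathrm{CL}$, testing \eqref{aux-phicc} with $w$ such that $w|_{\Omega_\mathrm{m}}=0$: the three membrane integrals drop and $(w_\ell-w_m)|_{\Gamma_\ell}=w_\ell$, so $\phi:=\phi_{cc}|_{\Omega_\mathrm{a}\cup\Omega_\mathrm{c}}$ satisfies
\begin{equation}\label{pp:reduced}
\int_{\Omega_\mathrm{a}\cup\Omega_\mathrm{c}}\sigma(\bm\varrho,\xi)\nabla\phi\cdot\nabla w\,\dif{x}
=-\int_{\Gamma_\mathrm{a}}g_\mathrm{a}w\,\dif{s}-\int_{\Gamma_\mathrm{c}}g_\mathrm{c}w\,\dif{s}=:\langle\mathcal F,w\rangle,
\qquad\forall w\in V_r(\Omega_\mathrm{a}\cup\Omega_\mathrm{c}),
\end{equation}
where $g_\ell:=j_\ell(\phi_{cc,\ell}-\phi_{cc,m})$ is now a \emph{fixed} function, built from the $H^1$-solution already produced in Proposition \ref{proptt}. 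The decisive structural fact is that the saturating form \eqref{defeta} gives the \emph{uniform} bound $|j_\ell(\eta)|\le j_{\ell,L}\le j_L$ for all $\eta$, so $\|g_\ell\|_{\infty,\Gamma_\ell}\le j_L$ and, by the trace inequality on $\Omega_\mathrm{a}\cup\Omega_\mathrm{c}$, $\mathcal F\in(V_r(\Omega_\mathrm{a}\cup\Omega_\mathrm{c}))'$ with $\|\mathcal F\|\le j_L|\Gamma_\mathrm{CL}|$ for every $r$ in a right-neighbourhood of $2$ (the trace constant being absorbed into the normalisation of $M_r$). Thus \eqref{pp:reduced} is a \emph{linear} mixed problem: $-\nabla\cdot(\sigma\nabla\phi)=0$ in $\Omega_\mathrm{a}$ and in $\Omega_\mathrm{c}$, homogeneous Dirichlet on $\Gamma_\mathrm{cc}\cup(\Gamma\cap\partial\Omega_\mathrm{a})$, bounded conormal data on $\Gamma_\mathrm{CL}$, homogeneous Neumann on the remaining boundary.

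Next I would apply Gr\"oger regularity. The configuration $\bigl(\Omega_\mathrm{a}\cup\Omega_\mathrm{c};\ \Gamma_\mathrm{cc}\cup(\Gamma\cap\partial\Omega_\mathrm{a}),\ \text{Neumann part}\bigr)$ is regular in the sense of Gr\"oger (Lipschitz subdomains, Dirichlet and Neumann parts meeting transversally), so by \cite{gro89,groreh} the Laplacean operator $J$ is an isomorphism $V_r(\Omega_\mathrm{a}\cup\Omega_\mathrm{c})\to(V_r(\Omega_\mathrm{a}\cup\Omega_\mathrm{c}))'$ for all $r$ in an interval around $2$ that depends exclusively on this boundary data, with $\|J^{-1}\|=M_r$; the quantitative Sneiberg stability of the interpolation scale (cf. \cite{jo2016,jo2021}) gives $M_r\to M_2$ as $r\downarrow2$, hence one may fix $r>2$ with $M_r<\sigma^\#/\sqrt{(\sigma^\#)^2-\sigma_\#^2}$, which is the admissibility condition recorded just before the statement. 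For the variable coefficient I would freeze $\sigma$ at a constant comparison value $\bar\sigma$, rewrite \eqref{pp:reduced} as the fixed-point identity $\phi=\bar\sigma^{-1}J^{-1}\mathcal F-\bar\sigma^{-1}J^{-1}\bigl[(\sigma-\bar\sigma)\nabla\phi\bigr]$, and invoke Gr\"oger's $k$-ellipticity inequality to bound the perturbation by $\sqrt{(\sigma^\#)^2-\sigma_\#^2}$: the correction operator is then a contraction of ratio $M_r\sqrt{(\sigma^\#)^2-\sigma_\#^2}/\sigma^\#<1$, and summing the Neumann series gives $\phi\in W^{1,r}(\Omega_\mathrm{a}\cup\Omega_\mathrm{c})$ together with $\|\nabla\phi\|_r\le R_3$, i.e. \eqref{cotajoule}, after substituting $\|\mathcal F\|\le j_L|\Gamma_\mathrm{CL}|$.

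For the stable convergence of the Joule term, observe that under the hypotheses of Proposition \ref{propttm} each $\phi_m:=(\phi_{cc})_m|_{\Omega_\mathrm{a}\cup\Omega_\mathrm{c}}$ solves \eqref{pp:reduced} with coefficient $\sigma(\bm\varrho_m,\xi_m)$ (same bounds $\sigma_\#,\sigma^\#$) and data $g_{\ell,m}$ with $\|g_{\ell,m}\|_{\infty,\Gamma_\ell}\le j_L$, whence the uniform bound $\|\nabla\phi_m\|_{r,\Omega_\mathrm{a}\cup\Omega_\mathrm{c}}\le R_3$. Subtracting the equations for $\phi_m$ and $\phi$ and applying the estimate above to the difference, the right-hand side is controlled by $\|(\sigma(\bm\varrho_m,\xi_m)-\sigma(\bm\varrho,\xi))\nabla\phi\|_{r}$ plus the dual norm of $w\mapsto\int_{\Gamma_\mathrm{CL}}(g_{\ell,m}-g_\ell)w\,\dif{s}$. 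Both tend to $0$: by the Rellich and compact-trace embeddings, $\bm\varrho_m\to\bm\varrho$, $\xi_m\to\xi$ in $L^2(\Omega)$ and $(\phi_{cc})_m\to\phi_{cc}$ in $L^2(\Gamma_\mathrm{CL})$, so (along a subsequence) $\sigma(\bm\varrho_m,\xi_m)\to\sigma(\bm\varrho,\xi)$ and $g_{\ell,m}\to g_\ell$ a.e. and boundedly, whence in every $L^p$, $p<\infty$, by dominated convergence, and $(\sigma(\bm\varrho_m,\xi_m)-\sigma(\bm\varrho,\xi))\nabla\phi\to0$ in $L^r$ since it is dominated by $2\sigma^\#|\nabla\phi|\in L^r$. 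Therefore $\nabla\phi_m\to\nabla\phi$ strongly in $L^r(\Omega_\mathrm{a}\cup\Omega_\mathrm{c})$; writing $\sigma_m|\nabla\phi_m|^2-\sigma|\nabla\phi|^2=\sigma_m(\nabla\phi_m-\nabla\phi)\cdot(\nabla\phi_m+\nabla\phi)+(\sigma_m-\sigma)|\nabla\phi|^2$, applying H\"older with exponent $r$ to the first term and dominated convergence to the second yields $\sigma(\bm\varrho_m,\xi_m)|\nabla\phi_m|^2\to\sigma(\bm\varrho,\xi)|\nabla\phi|^2$ in $L^{r/2}(\Omega_\mathrm{a}\cup\Omega_\mathrm{c})$.

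The genuinely delicate part will be the middle step: verifying that the reduced configuration meets Gr\"oger's geometric requirement at the junctions of the Dirichlet and Neumann parts of $\partial(\Omega_\mathrm{a}\cup\Omega_\mathrm{c})$, and then carrying the coefficient-freezing Neumann series with \emph{all} trace and Poincar\'e constants tracked so that the output is literally the constant $R_3$ of \eqref{cotajoule} and not merely one of the same shape; the stability $M_r\to M_2$ is classical but must be used in its quantitative form to license the admissibility condition on $r$. By contrast, the reduction in the first step and the convergence in the last step are routine once the uniform bound $|j_\ell|\le j_{\ell,L}$ is exploited.
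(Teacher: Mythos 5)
Your proposal follows essentially the same route as the paper, which gives no in-text proof but defers to \cite{pemfc} and names precisely the ingredients you use: Gr\"oger's \(W^{1,r}\) theory for the mixed problem on \(\Omega_\mathrm{a}\cup\Omega_\mathrm{c}\) with the constant \(M_r\) and the admissibility condition \(M_r<\sigma^\#/\sqrt{(\sigma^\#)^2-\sigma_\#^2}\), the uniform limiting-current bound \(|j_\ell|\le j_{\ell,L}\) coming from \eqref{defeta}, and a uniform-bound-plus-compactness passage for the Joule term; moreover your coefficient-freezing contraction (with Gr\"oger's choice of freezing parameter \(\sigma_\#/(\sigma^\#)^2\)) reproduces exactly the algebraic form of \(R_3\) in \eqref{cotajoule}. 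The one soft spot is the parenthetical claim that the trace constant in \(\|\mathcal F\|_{(V_r)'}\le j_L|\Gamma_\mathrm{CL}|\) is ``absorbed into the normalisation of \(M_r\)'': \(M_r\) is defined through the operator \(J\) alone, so that bound has to be established by an explicit trace/Poincar\'e estimate on \(\Gamma_\mathrm{CL}\), as you yourself acknowledge in your final paragraph --- bookkeeping to be completed rather than a conceptual gap.
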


\begin{remark}\label{regular2}
The domains \(\Omega_\mathrm{a}\)  and \(\Omega_\mathrm{c}\) are regular in the sense in \cite{gro89} for every \(r\geq 2\), which means that the above
regularity is valid for every \(r\geq 2\).
We  define \(t = r/2=2\).
\end{remark}

\section{Fixed point argument(Proof of Theorem \ref{tmain})}
\label{fpthm}

Our aim is to apply the Tychonoff fixed point theorem to the operator \(\mathcal{T}\) defined in \eqref{fpa}.
The closed set \(K\subset E= 
(H(\Omega_p)/\mathbb{R}) \times  [V(\Omega)]^{3}  
\times L^t(\Omega_\mathrm{a}\cup\Omega_\mathrm{c})\),  \(t>1\), defined as
\begin{align*}
K= \lbrace (\pi, \bm{\upsilon},\Phi): \ \| \nabla \pi\|_{2,\Omega_\mathrm{p}}\leq R_1, \ \| \bm{\upsilon} \|_{V_2} \leq R_2,\ 
\|\Phi\|_{t,\Omega_\mathrm{a}\cup\Omega_\mathrm{c}}\leq R_3 \rbrace
\end{align*}
 is compact when the topological vector space is provided by the weak topology, or simply weakly compact, because \(E\) is reflexive.
The radius \(R_1\), \(R_2\)and \(R_3\) are the positive constants defined in \eqref{R1},  \eqref{R2} 
 and  \eqref{cotajoule}, respectively.

The operator \(\mathcal{T}\) is well defined for \(n=2\):
\begin{itemize}
\item  due to Proposition \ref{proppxi}, 
since  \(H(\Omega_{p})\hookrightarrow L^2(\Omega_\mathrm{p})\).
Faced with  \(\min\{\mu_\#/2,\beta_\#\}= \mu_\#/2 >\mu_\#/(2C_K) \), the estimate  \eqref{cotaup} may be rewritten 
\begin{align}\label{cotau2}
  \left( \|  \nabla\mathbf{u} \|_{2,\Omega_\mathrm{f} }^2+ \|\mathbf{u}_T\|_{2,\Gamma} ^2\right)^{1/2} \leq aR_2^2+\sqrt{\frac{2C_K}{\mu_\#} }C_0; \\
\label{R1}
\| \nabla  p\|_{2,\Omega_\mathrm{p}} \leq  \sqrt{\frac{\mu^\#}{K_l} } \left(  \sqrt{2L} \frac{R_M}{\sqrt{\mu_\#}}R_2^2+C_0\right):= R_1 ,
\end{align}
where \(C_0\) is defined in  \eqref{C0} and
\[
a:= 2\sqrt{C_K L}   \frac{R_M}{\mu_\#} . \]

\item  due to Proposition \ref{proptt},
taking  \(\mathbf{w}=\mathbf{u}\in \mathbf{H}^{1}(\Omega_\mathrm{f})\) into  account such that obeys \eqref{cotau2},
 if provided by \eqref{defw}, \textit{i.e.}
\[
aR_2^2 < root_1  - \sqrt{\frac{2C_K}{\mu_\#} }C_0 := b,\]
which is possible by  the smallness condition \eqref{small}. 

\item
 and due to Proposition  \ref{regular} and Remark \ref{regular2},  by taking \(t = 2\).
\end{itemize}
Its continuity results from Propositions \ref{upm} and \ref{propttm}-\ref{regular}.

It remains to prove that \(\mathcal{T}\) maps \(K\) into itself. Let \( ( \pi, \bm{\upsilon}, \Phi)\in K\) be given, and let 
\((p,\bm{\Upsilon},\Theta, |\nabla\phi|_{\Omega_\mathrm{a}\cup\Omega_\mathrm{c}}|^2)=\mathcal{T} (  \pi, \bm{\upsilon}, \xi, \Phi)\).

On the one hand, there exists
 the auxiliary velocity field  \(\mathbf{u}=\mathbf{u}(\pi, \bm{\varrho}|_{\Omega_\mathrm{f}},\xi)\) being in accordance with  Proposition \ref{proppxi}
such that verifies \eqref{cotau2}-\eqref{R1}.
On the other hand, there exists 
\(( \bm{\Upsilon},\Theta, \phi_{cc})=(\bm{\rho},  \theta, \phi) (\mathbf{u},  \bm{\varrho},\xi, \Phi)\)
being in accordance with  Proposition \ref{proptt}.

In order to seek for the existence of \(R_2>0\), we may choose \(R_2\) solving
\begin{equation}\label{R2}
aR_2^2+\sqrt{2C_K / \mu_\#}C_0=root_2 <root_1,\end{equation}
 with
 \(root_2\) being the positive root of the quadratic polynomial
 \begin{equation}\label{2polynomial}
 \min_{i} (a_{i,\#}-a_{i.m}) - (1 /2 + \sqrt{2})\sqrt{L}t-\frac{L}{4}t^2=0.\end{equation}
Then,  the estimate \eqref{cotattphi} may be rewritten as
\begin{equation}
 \sum_{i=1}^2 a_{i,m}  \|\nabla\Upsilon_i\|_{2,\Omega}^2
+ a_{3}\|\nabla\Theta\|_{2,\Omega}^2+  \frac{h_\#}{2} \|\Theta\|_{2,\Gamma_\mathrm{w}}^2 
\leq\frac{(\sigma^\#)^2}{k_\#}  R_3^2 + \frac{h^\#}{2}\|\theta_e\|_{2,\Gamma_\mathrm{w}}^2 +\mathcal{B}_0  . \label{cotarhoitt}
\end{equation}
 
The estimate \eqref{cotarhoitt} yields
\begin{align*}
\sum_{i=1}^2 \|\nabla\Upsilon_i\|_{2,\Omega}^2
+\|\nabla\Theta\|_{2,\Omega}^2+\|\Theta\|_{2,\Gamma_\mathrm{w}}^2 
\leq  \frac{1}{a_\#} \left( \frac{(\sigma^\#)^2}{k_\#}  R_3^2 +c \right)\nonumber \\
 \leq R_2^2 := \frac{root_2 -\sqrt{2C_K / \mu_\#}C_0 }{a}  <\frac{b}{a},
\end{align*}
which is possible by the smallness condition \(root_2 > \sqrt{2C_K / \mu_\#}C_0 + ac/a_{\#}\), that is  \eqref{small}.
Here, we set
\begin{align}
c&:=   \frac{h^\#}{2}\|\theta_e\|_{2,\Gamma_\mathrm{w}}^2 +\mathcal{B}_0 ; \nonumber \\  
a_\#& := \min_{i=1,2,3}\{ a_{i,\#}, a_{i,m}\},\label{defaa}
\end{align}

Therefore, we defined \(R_2\) in such way the smallness condition \eqref{small} is  fulfilled by the data,
which completes the proof of Theorem \ref{tmain}.

\end{document}